\newcommand\R{{\mathbf{R}}}
\renewcommand\H{{\mathbf{H}}}
\renewcommand\S{{\mathcal{S}}}
\newcommand\Sch{{\operatorname{Schwartz}}}
\newcommand\T{{\mathrm{T}}}
\newcommand\E{{\mathrm{E}}}
\newcommand\Sym{{\operatorname{Sym}}}
\newcommand\Dil{{\operatorname{Dil}}}
\newcommand\Trans{{\operatorname{Trans}}}
\newcommand\Time{{\operatorname{Time}}}
\newcommand\Rev{{\operatorname{Rev}}}
\newcommand\Rot{{\operatorname{Rot}}}
\newcommand\Energy{{\dot{\mathcal{H}^1}}}
\newcommand\eps{{\varepsilon}}
\theoremstyle{plain}
  \newtheorem{theorem}[subsection]{Theorem}
  \newtheorem{conjecture}[subsection]{Conjecture}
  \newtheorem{claim}[subsection]{Claim}
  \newtheorem{proposition}[subsection]{Proposition}
  \newtheorem{lemma}[subsection]{Lemma}
  \newtheorem{corollary}[subsection]{Corollary}
\theoremstyle{remark}
  \newtheorem{remark}[subsection]{Remark}
\theoremstyle{definition}
  \newtheorem{definition}[subsection]{Definition}
\begin{document}

\title[Global regularity of wave maps III]{Global regularity of wave maps III.  Large energy from $\R^{1+2}$ to hyperbolic spaces}
\author{Terence Tao}
\address{Department of Mathematics, UCLA, Los Angeles CA 90095-1555}
\email{ tao@@math.ucla.edu}
\subjclass{35L70}

\vspace{-0.3in}
\begin{abstract}
We show that wave maps $\phi$ from two-dimensional Minkowski space $\R^{1+2}$ to hyperbolic spaces $\H^m$ are globally smooth in time if the initial data is smooth, conditionally on some reasonable claims concerning the local theory of such wave maps, as well as the self-similar and travelling (or stationary solutions); we will address these claims in the sequels \cite{tao:heatwave2}, \cite{tao:heatwave3}, \cite{tao:heatwave4}, \cite{tao:heatwave5} to this paper.  Following recent work in critical dispersive equations, the strategy is to reduce matters to the study of an \emph{almost periodic} maximal Cauchy development in the energy class.  We then repeatedly analyse the stress-energy tensor of this development (as in \cite{grillakis-energy}, \cite{tao:forges}) to extract either a self-similar, travelling, or degenerate non-trivial energy class solution to the wave maps equation.  We will then rule out such solutions in the sequels to this paper, establishing the desired global regularity result for wave maps.
\end{abstract}

\maketitle

\section{Introduction}

\subsection{Wave maps}

This paper is concerned with the global regularity problem for two-dimensional (i.e. energy-critical) wave maps into hyperbolic spaces.  To explain this problem, we first need to introduce some standard notation.

For any $n \geq 1$ let $\R^{1+n}$ be Minkowski space $\{ (t,x): t \in \R, x \in \R^n \}$
with the usual metric $g_{\alpha \beta} x^\alpha x^\beta := -dt^2 + dx^2$.  We shall work primarily in $\R^{1+2}$, parameterised by Greek indices $\alpha, \beta = 0,1,2$, raised and lowered in the usual manner.  We also parameterise the spatial coordinates by Roman indices $i,j = 1,2$.  We let $SO(n,1)$ be the space of Lorentz transformations on $\R^{1+n}$ (i.e. unimodular linear isometries of Minkowski space).

Fix $m \geq 1$.  Let \emph{hyperbolic space} $\H = (\H^m,h)$ be the simply-connected $m$-dimensional Riemannian manifold of constant negative sectional curvature $-1$.  The precise realisation of hyperbolic space is not important for our purposes, but for sake of concreteness one can take
$$ \H := \{ (t,x) \in \R^{1+m}: t = +\sqrt{1 + |x|^2} \} \subset \R^{1+m}$$
to be the upper sheet of the unit hyperboloid in Minkowski space $\R^{1+m}$ with $h$ being the induced Riemannian metric.  Note that $SO(m,1)$ acts transitively on $\H$ with stabiliser equal to the orthogonal group $SO(m)$, thus $\H \equiv SO(m,1) / SO(m)$.

Define a \emph{classical wave map} to be a pair $\phi = (\phi,I)$, where $I \subset \R$ is an interval, and $\phi: I \times \R^2 \to \H$ is a smooth map on the slab $I \times \R^2 \subset \R^{1+2}$ which differs\footnote{This is slightly more general than the notion of a classical wave map in some earlier papers (including some of the author), in which the wave map is assumed to be smooth and compactly supported modulo constants rather than Schwartz modulo constants.  It will be more convenient for us to work in the Schwartz category as this will be preserved by harmonic map heat flow.  However the distinctions between the two spaces are quite minor.} from a constant $\phi(\infty) \in \H$ by a function Schwartz in space (embedding $\H$ in $\R^{1+m}$ to define the Schwartz space), and is a (formal)  critical point of the Lagrangian
\begin{equation}\label{lagrangian}
 \int_{\R^{1+2}} \langle \partial^\alpha \phi(t,x), \partial_\alpha \phi(t,x) \rangle_{h(\phi(t,x))}\ dt dx
\end{equation}
in the sense that $\phi$ obeys the corresponding Euler-Lagrange equation
\begin{equation}\label{cov}
 (\phi^*\nabla)^\alpha \partial_\alpha \phi = 0,
\end{equation}
where $(\phi^*\nabla)^\alpha$ is covariant differentiation on the vector bundle $\phi^*(T\H)$ 
with respect to the pull-back $\phi^*\nabla$ via $\phi$ of the Levi-Civita connection $\nabla$ on the tangent bundle $T\H$ of $\H$.  If $I = \R$, we say that the wave map is \emph{global}.  

\begin{remark}
The wave map equation is closely related to \emph{$\sigma$-models} in gauge theory, and is the Minkowski analogue of a harmonic map.
\end{remark}

We record five important (and well known) symmetries of wave maps:

\begin{itemize}
\item For any $t_0 \in \R$, we have the time translation symmetry 
\begin{equation}\label{time-trans}
\Time_{t_0}: \phi(t,x) \mapsto \phi(t-t_0,x).
\end{equation}
\item For any $x_0 \in \R^2$, we have the space translation symmetry
\begin{equation}\label{space-trans}
\Trans_{x_0}: \phi(t,x) \mapsto \phi(t,x-x_0).
\end{equation}
\item Time reversal symmetry
\begin{equation}\label{time-reverse}
\Rev: \phi(t,x) \mapsto \phi(-t,x).
\end{equation}
\item For every $U \in SO(m,1)$, we have the target rotation symmetry
\begin{equation}\label{rotate}
\Rot_U: \phi(t,x) \mapsto U \circ \phi(t,x).
\end{equation}
\item For every $\lambda > 0$, we have the scaling symmetry
\begin{equation}\label{cov-scaling}
\Dil_\lambda: \phi(t,x) \mapsto \phi(\frac{t}{\lambda}, \frac{x}{\lambda})
\end{equation}
\end{itemize}
We will exploit all of these symmetries in the sequel.  (Lorentz invariance will not be directly used in this paper, although it will be lurking behind the scenes in some of our stress-energy manipulations.)

Another symmetry of the wave maps equation is the diffeomorphism symmetry on the domain $(\R^{1+2},g)$ (generalising \eqref{time-trans}, \eqref{space-trans}, \eqref{time-reverse}, and the Lorentz symmetry). We shall (implicitly) exploit this symmetry via the conservation law
\begin{equation}\label{conserv}
 \partial^\alpha \T_{\alpha \beta} = 0
\end{equation}
for the \emph{stress-energy tensor} $\T: \R^{1+2} \to \Sym^2(\R^{1+2})$ defined by
\begin{equation}\label{stress-def}
\T_{\alpha \beta} := \langle \partial_\alpha \phi, \partial_\beta \phi \rangle_{h(\phi)}
- \frac{1}{2} g_{\alpha \beta} \langle \partial^\gamma \phi, \partial_\gamma \phi \rangle_{h(\phi)}.
\end{equation}
In particular, the \emph{energy} $\E(\phi)$, defined by
\begin{equation}\label{energy-def}
 \E(\phi) = \E(\phi[t]) := \int_{\R^2} \T_{00}(t,x)\ dx = \int_{\R^2} \frac{1}{2} |\partial_t \phi|_{h(\phi)}^2
+ \frac{1}{2} |\nabla_x \phi|_{h(\phi)}^2\ dx
\end{equation}
is conserved in time.  From \eqref{conserv} we also see that the energy is preserved under all the symmetries \eqref{time-trans}-\eqref{cov-scaling} mentioned above; the fact that it is invariant under the scaling symmetry is specific to two spatial dimensions, which is thus the \emph{critical} dimension for this problem.

We also note that the stress-energy tensor $\T_{\alpha \beta}$ in two spatial dimensions determines all inner products $\langle \partial_\alpha \phi, \partial_\beta \phi \rangle_{h(\phi)}$ by the identity
\begin{equation}\label{destress}
\langle \partial_\alpha \phi, \partial_\beta \phi \rangle_{h(\phi)} = \T_{\alpha \beta} -
g_{\alpha \beta} \operatorname{tr}(\T)
\end{equation}
where of course $\operatorname{tr}(\T) := g^{\alpha \beta} \T_{\alpha \beta}$.  The formula \eqref{destress} is important for us, as it allows us to manipulate quadratic expressions in the derivatives of $\phi$ for an abstract class of ``energy class solutions'', for which $\phi$ is not necessarily defined in a classical sense, but for which one can still meaningfully define a stress-energy tensor.

All of the above symmetries and conservation laws will play an important role in the analysis which follows.

\subsection{The global regularity conjecture}

Define \emph{classical data} to be any pair $\Phi_0 := (\phi_0,\phi_1)$, where $\phi_0: \R^2 \to \H$ is a smooth map which differs from a constant by a Schwartz function, and $\phi_1: \R^2 \to T\H$ is a Schwartz map with $\phi_1(x) \in T_{\phi_0(x)} \H$ for all $x \in \R^2$; let $\S$ denote the space of all classical data, equipped with the Schwartz  topology; one can view this space as a nonlinear analogue of the Schwartz space $\Sch(\R^2) \times \Sch(\R^2)$.  Observe that for any classical wave map $(\phi,I)$ and any time $t \in I$, the pair $\phi[t] := (\phi(t), \partial_t \phi(t))$ lies in $\S$.  Also observe that spatial translations \eqref{space-trans}, time reflection \eqref{time-reverse}, target rotation \eqref{rotate}, and scaling \eqref{cov-scaling} act on $\S$ in the obvious manner, namely
\begin{align}
\Trans_{x_0}: (\phi_0(x), \phi_1(x)) &\mapsto (\phi_0(x-x_0), \phi_1(x-x_0)) \label{space-trans-data}\\
\Rev: (\phi_0(x), \phi_1(x)) &\mapsto (\phi_0(x), -\phi_1(x)) \label{time-reverse-data}\\
\Rot_U: (\phi_0(x), \phi_1(x)) &\mapsto (U\phi_0(x), dU(\phi_0(x))(\phi_1(x))) \label{rotate-data}  \\
\Dil_\lambda: (\phi_0(x), \phi_1(x)) &\mapsto (\phi_0(\frac{x}{\lambda}), \frac{1}{\lambda} \phi_1(\frac{x}{\lambda})) \label{scaling-data}
\end{align}
respectively.  (The (nonlinear) action of time translation \eqref{time-trans} can be defined conditionally on Conjecture \ref{conj} below.)  Also observe that the stress-energy tensor $\T$ defined in \eqref{stress-def} can be viewed as a continuous map from $\S$ to the space $L^1( \R^2 \to \Sym^2(\R^{1+2}) )$ of absolutely integrable symmetric rank two tensors on $\R^2$.
 
We will be concerned with the following conjecture, which can be found for instance in \cite{klainerman:unified}:

\begin{conjecture}[Global regularity for wave maps]\label{conj}  For every $\Phi_0 \in \S$ there exists a unique global classical wave map $(\phi,\R)$ with $\phi[0] = \Phi_0$.
\end{conjecture}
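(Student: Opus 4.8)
The plan is to argue by contradiction, following the concentration--compactness / minimal-energy blowup solution strategy of Kenig and Merle, adapted to wave maps. Granting the conjectured local theory for energy-class solutions (a local well-posedness theorem in a suitable solution space, a stability/perturbation theory, and the small-energy global regularity theorem of Tao, Tataru, Krieger, and Sterbenz--Tataru), the first step is to reformulate Conjecture~\ref{conj} as a uniform a priori bound: there should exist a function $F : \R^+ \to \R^+$ so that every classical wave map of energy at most $E$ obeys a global spacetime norm bound (of the $S^1$ type used in the small-energy theory) by $F(E)$. A standard continuity argument, using that finiteness of this norm on an interval is a blow-up criterion permitting the solution to be continued (and propagating Schwartz regularity), then shows that this bound implies Conjecture~\ref{conj}.

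Suppose the bound fails, and let $E_{\mathrm{crit}} > 0$ (positivity from small-energy theory) be the infimum of energies at which it fails. Taking classical wave maps with energies decreasing to $E_{\mathrm{crit}}$ but spacetime norms tending to infinity, I would run a profile decomposition on the corresponding data in $\S$ — here the perturbation theory is what lets profiles be superposed and the worst one isolated — to extract a nonzero energy-class solution $\phi$ of energy exactly $E_{\mathrm{crit}}$ whose trajectory is \emph{almost periodic}: precompact in the energy class modulo the noncompact symmetries, i.e. modulo \eqref{space-trans}, \eqref{rotate}, \eqref{cov-scaling}, and Lorentz boosts. Concretely, there are functions $N(t) > 0$ and $x(t) \in \R^2$ (and a boost parameter) so that the rescaled, recentred solutions $\Dil_{1/N(t)} \Trans_{-x(t)} \phi[t]$ lie in a fixed compact subset of the energy-class completion of $\S$. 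This is the ``almost periodic maximal Cauchy development'' of the abstract.

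The core of the argument is a rigidity step driven entirely by the stress-energy tensor. Using the conservation law \eqref{conserv} together with \eqref{destress} — which rewrites every relevant quadratic expression in $\partial \phi$ in terms of $\T$ alone, so the manipulations make sense for the merely energy-class solution $\phi$ — I would test \eqref{conserv} against a family of Morawetz- and virial-type vector field multipliers (as in Grillakis \cite{grillakis-energy} and in \cite{tao:forges}) to obtain monotonicity formulae constraining $N(t)$, $x(t)$, and the boost. These should force a trichotomy: either (a) $\phi$ blows up in finite time with $N(t) \sim 1/(T-t)$ and energy concentrating along a light cone, whence a rescaling limit yields a nontrivial finite-energy \emph{self-similar} solution; or (b) $N(t)$ stays comparable to $1$ while the solution translates at a subluminal speed, yielding in the limit a nontrivial \emph{travelling} (or, at zero speed, stationary / harmonic-map) solution; or (c) a residual \emph{degenerate} case in which the kinetic components of $\T$ vanish asymptotically, again producing a stationary-type object.

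Finally, each such special solution is to be ruled out in the sequels \cite{tao:heatwave2}--\cite{tao:heatwave5}, where harmonic map heat flow regularises the limiting object and a Bochner/rigidity computation excludes it (recalling that $\H^m$, being a negatively curved Cartan--Hadamard manifold, supports no nonconstant finite-energy harmonic maps or self-similar wave maps from $\R^2$); this contradiction establishes the a priori bound and hence Conjecture~\ref{conj}. The step I expect to be hardest is the construction of the almost periodic solution: it rests on an energy-class local theory and a concatenation/stability lemma robust enough to carry the profile decomposition, precisely the delicate input deferred to the companion papers. A close second is arranging that the stress-energy extraction produces a limiting solution regular and nondegenerate enough to be reached by the rigidity theorems, since $\phi$ itself is a priori only an energy-class, not classical, solution.
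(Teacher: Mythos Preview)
Your outline follows the same concentration--compactness/rigidity paradigm as the paper, but the execution differs in several concrete respects. The paper does \emph{not} reformulate the conjecture as a uniform spacetime-norm bound $F(E)$; instead it works in an abstract energy space $\Energy$ (Claim~\ref{energy-claim}), reduces Conjecture~\ref{conj} to Conjecture~\ref{conj2} via persistence of regularity (Claim~\ref{lwp-claim}(iii)), and invokes Claim~\ref{minimal} as a black box to obtain the almost periodic solution, deferring the profile decomposition entirely to the sequels. More importantly, the paper's almost periodicity (Definition~\ref{ap-def}) quotients only by dilations and spatial translations, \emph{not} by Lorentz boosts; boosts mix time slices and do not act in any evident continuous way on fixed-time energy-class data, so including them in your symmetry group is at best unmotivated and at worst ill-defined. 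You should drop boosts from the formulation.

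The trichotomy is also organised differently. Rather than splitting on whether $N(t)$ blows up in finite time, the paper first reduces (Proposition~\ref{normas}) to a \emph{normalised ancient solution} on $(-\infty,0]$ with $N(t)\le 1$, then uses the Grillakis--type Morawetz estimate (Proposition~\ref{moraw}) to locate a sequence $t_n\to-\infty$ along which the solution is asymptotically self-similar, extracting scalar parameters $\alpha=\lim 1/(|t_n|N(t_n))$ and $v=\lim x(t_n)/t_n$. The three cases are $\alpha\neq 0$ (rescaling to a self-similar limit, contradicting Claim~\ref{selfsim}(ii)), $\alpha=0$ with $|v|<1$ (rescaling to a travelling limit, contradicting Claim~\ref{selfsim}(i)), and $\alpha=0$ with $|v|=1$. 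The last case is not a ``kinetic components vanish'' scenario but a genuinely lightlike one: a further stress-energy computation forces the transverse derivative $|\partial_2\phi|^2$ to vanish, yielding shift-invariant data and contradicting Claim~\ref{nondeg}. Your case (a) conflates finite-time blowup with self-similarity; in the paper the self-similar limit is extracted from the ancient (past) direction, not by zooming into a finite blowup time.
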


\begin{remark}
This conjecture forms part of an intensive study of the initial value problem for the wave maps equation; see \cite{kman.barrett}, \cite{kman.selberg:survey}, \cite{shatah-struwe}, \cite{struwe.barrett}, \cite{tataru:survey}, \cite[Chapter 6]{tao:cbms}, \cite{rod}, \cite{krieger:survey} for surveys of this problem.  By using finite speed of propagation (and the contractibility of the target manifold $\H$), one can remove the support hypotheses on the data and solution, allowing one to construct global smooth solutions to the wave maps equation from arbitrary smooth initial data, but it will be convenient for us to retain the support conditions in the sequel.  One can also generalise from $\H$ to other manifolds of constant negative sectional curvature (which we can normalise to be $-1$) by the usual lifting argument (taking advantage of the contractibility of the domain $\R^2$).
\end{remark}

We briefly summarise some of the past progress on this conjecture (see \cite[Chapter 6]{tao:cbms} or \cite{krieger:survey} for a more detailed survey).  From classical energy methods (and expressing the wave maps equation in local coordinate charts) one can easily establish the existence of a \emph{local} classical solution, and to establish uniqueness of any (local or global) classical solution; see e.g. \cite{shsw:wavemap}.  Local solutions can also be constructed in significantly rougher function spaces than $C^\infty$: see \cite{kman.selberg}, \cite{tataru:wave2}, \cite{tataru:wave3}, \cite{tataru:survey}.  The conjecture has been established for radially symmetric data in \cite{christ.spherical.wave}, for rotationally equivariant data in \cite{shatah.shadi.eqvt}, and for various classes of small data (or perturbations of symmetric data) in \cite{sideris.harmonic}, \cite{tataru:wave2}, \cite{krieger:2d}, \cite{krieger:stab}; also, global \emph{weak} solutions were constructed in \cite{muller}. If the hyperboloid is replaced by the sphere, analogues of all the above results hold (for instance, global regularity in the small energy case was established in \cite{tao:wavemap2}), but the analogue of the above conjecture fails \cite{kst}, \cite{rs} (see also the numerical work in \cite{bct}, \cite{isenberg}). 

Since the initial release of this preprint, an alternate proof of this conjecture has been provided by Sterbenz and Tataru \cite{sterbenz}, \cite{sterbenz2}, as well as Krieger and Schlag (private communication).

\begin{remark}
As mentioned earlier, the dimension $d=2$ of space is the critical dimension for this problem. In the sub-critical dimension $d=1$, global regularity can be easily established \cite{gu}, \cite{lady}, \cite{shatah}, \cite{keel:wavemap}, whereas in super-critical dimensions $d > 2$, singularities are expected to form \cite{css}.  On the other hand, the local and perturbative theory for $d=2$ has analogues for $d>2$ (replacing the energy norm by the critical Sobolev norm $\dot H^{d/2} \times \dot H^{d/2-1}$), and in fact the theory simplifies in this setting, particularly for $d \geq 4$ or $d \geq 5$: see \cite{kl-mac:duke-null}, \cite{kr:wavemap}, \cite{krieger:3d}, \cite{nahmod}, \cite{shsw:wavemap}, \cite{tao:wavemaps},  \cite{tao:wavemap2}, \cite{tataru:wave1}, \cite{tataru:wave2}.
\end{remark}

\subsection{Claims}

The purpose of this paper is to establish Conjecture \ref{conj}, conditionally on five simpler (and plausible) claims, which we shall address in sequels \cite{tao:heatwave2}, \cite{tao:heatwave3}, \cite{tao:heatwave4}, \cite{tao:heatwave5} to this paper.  We now state these claims.

The first claim asserts the existence of an energy space $\Energy$, which is a nonlinear analogue of the scalar energy space $\dot H^1(\R^2) \times L^2(\R^2)$, with which to hold generalisations of the classical data $\Phi_0$ appearing above.  This energy space will then be used in all the other claims.

\begin{claim}[Energy space]\label{energy-claim}  There exists a complete metric space $\Energy$ with a continuous map $\iota: \S \to \Energy$, that obeys the following properties:
\begin{itemize}
\item[(i)] $\iota(\S)$ is dense in $\Energy$.
\item[(ii)] $\iota$ is invariant under the action \eqref{rotate-data} of the rotation group $SO(m,1)$, thus $\iota(\Rot_U \Phi) = \iota(\Phi)$ for all $\Phi \in \S$. Conversely, if $\iota(\Phi) = \iota(\Psi)$, then $\Psi = \Rot_U(\Phi)$ for some $U \in SO(m,1)$.
\item[(iii)] The actions \eqref{space-trans-data}, \eqref{time-reverse-data}, \eqref{scaling-data} on $\S$ extend to a continuous isometric action on $\Energy$ (after quotienting out by rotations as in (ii)).
\item[(iv)] The stress-energy map $\T: \S \to L^1(\R^2 \to \hbox{Sym}^2(\R^{1+2}))$ extends to a continuous map $\T: \Energy \to L^1(\R^2 \to \hbox{Sym}^2(\R^{1+2}))$ (again after quotienting out by rotations as in (ii)).  In particular, we have a continuous energy functional $\E: \Energy \to [0,+\infty)$.
\item[(v)] If $\Phi \in \Energy$ has zero energy, thus $\E(\Phi)=0$, then $\Phi$ is constant (or more precisely, $\Phi = \iota(p,0)$ for any $p \in \H$). 
\end{itemize}
\end{claim}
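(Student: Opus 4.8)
The plan is to construct $\Energy$ as a metric completion, using the harmonic map heat flow to ``linearise'' the data and replace the map $\phi_0$ by a collection of $\R^m$-valued derivative fields. Given classical data $\Phi_0 = (\phi_0,\phi_1) \in \S$, I would first solve the harmonic map heat flow
\[ \partial_s \phi(s,\cdot) = (\phi(s)^*\nabla)^i \partial_i \phi(s,\cdot), \qquad \phi(0,\cdot) = \phi_0, \]
on the heat-time interval $s \in [0,+\infty)$. Since $\H^m$ has non-positive sectional curvature and $\phi_0$ differs from a constant by a Schwartz function, this flow should exist globally in $s$, remain Schwartz modulo the constant $\phi_0(\infty) \in \H$, have energy non-increasing in $s$, and --- because the only finite-energy harmonic maps $\R^2 \to \H^m$ are constant (by the removable singularity theorem together with the absence of non-constant harmonic maps $S^2 \to \H^m$, the latter via the Eells--Sampson Bochner identity) --- converge as $s \to \infty$ to the constant map $\phi_0(\infty)$. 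I would then fix the \emph{caloric gauge}: an orthonormal frame $e = (e_a)_{a=1}^m$ of the pullback bundle $\phi(s)^*(T\H)$ obtained by parallel transport with respect to $\phi^*\nabla$ along the heat-time direction, normalised at $s = +\infty$ to an arbitrary fixed orthonormal frame of $T_{\phi_0(\infty)}\H$. In this frame the derivative fields $\psi_i(s) := e(s)^*(\partial_i\phi(s))$ for $i=1,2$, together with $\psi_0 := e(0)^*(\phi_1)$, become honest maps $\R^2 \to \R^m$; the energy identity reads $\E(\Phi_0) = \tfrac12\int_{\R^2} |\psi_0|^2 + |\psi_1(0)|^2 + |\psi_2(0)|^2$, so $\psi_0, \psi_i(0) \in L^2(\R^2 \to \R^m)$, and $\Phi_0$ is recovered from the full family $(\psi_i(s))_{s \ge 0}$ and $\psi_0$ --- which the caloric condition in turn determines from $\psi_i(0)$ and $\psi_0$ --- up to the residual choice of frame at infinity, i.e. up to a single constant $SO(m)$ rotation of all the fields. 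Tracking how $\phi_0(\infty)$ and this frame transform under $\Rot_U$ then shows that the ensuing ambiguity in the data is precisely an $SO(m,1)$ action, matching (ii).

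Granting this, I would put a metric on $\S$ by
\[ d(\Phi,\Psi)^2 := \inf_{R \in SO(m)} \left( \| \psi_0^\Phi - R\psi_0^\Psi \|_{L^2(\R^2)}^2 + \sum_{i=1,2} \sup_{s \ge 0} \| \psi_i^\Phi(s) - R\psi_i^\Psi(s) \|_{L^2(\R^2)}^2 \right) \]
(or a suitable variant, e.g. incorporating a weighted Sobolev norm in $s$ to pin down the map component as well as its derivatives), and \emph{define} $\Energy$ to be the metric completion of $(\S,d)$; it is then complete by construction, with $\iota$ the canonical inclusion, which is continuous once one knows --- as part of the heat-flow theory --- that $\Phi_0 \mapsto (\psi_0,(\psi_i(s))_{s\ge0})$ is continuous from the Schwartz topology. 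Property (i) is immediate. Property (ii) follows from the frame-covariance recorded above: $d(\Rot_U\Phi,\Phi)=0$, while conversely $d(\Phi,\Psi)=0$ forces the derivative fields --- and hence, by reconstruction, the data --- to agree up to a constant rotation of $\H^m$. For (iii) I would check that the actions \eqref{space-trans-data}, \eqref{time-reverse-data}, \eqref{scaling-data} induce $L^2$-isometries on the derivative fields: space translation translates all the $\psi$'s, time reversal sends $\psi_0 \mapsto -\psi_0$ and fixes the $\psi_i$, and $\Dil_\lambda$ intertwines the flow at heat-time $s$ with the flow at heat-time $s/\lambda^2$ and sends $\psi_i(s,x) \mapsto \lambda^{-1}\psi_i(s/\lambda^2,x/\lambda)$; all of these preserve the norm defining $d$, so each extends to a continuous isometry of $\Energy$.

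For (iv) and (v) the key point is that, in the caloric gauge and using \eqref{destress} with the orthonormality of the frame, the stress-energy tensor at $t=0$ is a fixed, $SO(m)$-invariant quadratic form in $(\psi_0,\psi_1,\psi_2)$ alone, schematically
\[ \T_{\alpha\beta} = \langle \psi_\alpha, \psi_\beta\rangle_{\R^m} - \tfrac12 g_{\alpha\beta}\langle \psi^\gamma,\psi_\gamma\rangle_{\R^m}, \]
which is a locally Lipschitz map $L^2 \times L^2 \to L^1$ (Hölder's inequality). Since $\Phi \mapsto (\psi_0,\psi_1(0),\psi_2(0)) \in (L^2)^3/SO(m)$ is $1$-Lipschitz for $d$ (each summand of $d^2$ dominates the corresponding difference at $s=0$), this map extends continuously to all of $\Energy$; composing with the quadratic form yields the continuous extension $\T: \Energy \to L^1(\R^2 \to \Sym^2(\R^{1+2}))$ and, as its trace part, the continuous functional $\E = \tfrac12(\|\psi_0\|_{L^2}^2 + \|\psi_1\|_{L^2}^2 + \|\psi_2\|_{L^2}^2) \in [0,+\infty)$. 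Finally, $\E(\Phi)=0$ forces $\psi_0 = \psi_1 = \psi_2 = 0$, whence reconstruction gives $\phi_0$ constant and $\phi_1 = 0$, i.e. $\Phi = \iota(p,0)$; this is (v).

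The real content --- and the reason this claim is deferred to \cite{tao:heatwave2} --- lies entirely in the first paragraph: the quantitative theory of the harmonic map heat flow and of the caloric gauge at the energy-critical regularity. One must show that the flow is globally well-posed in $s$ for data in $\S$ and converges to a constant; that the caloric gauge exists, is unique given the normalisation at $s=+\infty$, and produces derivative fields and connection coefficients obeying quantitative bounds of the form $\sup_{s\ge 0}\|\psi_i(s)\|_{L^2} \lesssim_{\E(\Phi)} 1$ with a modulus of continuity in the data; and --- crucially --- that $\Phi_0 \mapsto (\psi_0,(\psi_i(s))_{s\ge0})$ is uniformly continuous on bounded-energy subsets of $\S$, so that $d$ is genuinely a metric and the extensions asserted in (iii)--(iv) actually exist on the completion. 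The qualitative fact underlying the large-energy theory here --- the absence of energy concentration (``bubbling'') along the flow --- is a consequence of the non-positive curvature of $\H^m$; converting it into the uniform estimates just described is the main obstacle.
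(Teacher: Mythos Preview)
The paper does not prove this claim at all: it is explicitly deferred to the sequel \cite{tao:heatwave2}, and is used throughout this paper only as a standing assumption. Your sketch is consistent with the approach the paper announces for that sequel --- a metric completion of $\S$ using the harmonic map heat flow (the caloric gauge) as a ``nonlinear Littlewood-Paley resolution'' --- and you correctly flag that the substantive analytic work (global well-posedness of the heat flow, construction and uniform continuity of the caloric gauge at energy regularity, absence of bubbling via non-positive curvature) is exactly what is postponed.
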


\begin{remark}
Morally speaking, $\Energy$ is the space of ``all'' pairs $\Phi_0 = (\phi_0,\phi_1)$ with $\phi_0: \R^2 \to \H$ and $\phi_1: \R^2 \to T\H$, with $\phi_1(x) \in T_{\phi_0(x)} \H$ for all (or almost all) $x$, and with the energy
$$ \E( \Phi_0 ) := \int_{\R^2} \frac{1}{2} |\nabla \phi_0|_{h(\phi_0)}^2 + \frac{1}{2} |\phi_1|_{h(\phi_0)}^2\ dx$$
finite, where the derivative $\nabla \phi_0$ is interpreted in some suitably weak sense, and then quotiented out by the action \eqref{rotate}.  There are however some technical difficulties arising from the fact that $\phi_0$ takes values in a manifold $\H$ rather than in a vector space, and also because the energy norm in two dimensions is not expected to control boundedness or continuity of functions in the energy space.  Note that $\H$ cannot be isometrically embedded into a Euclidean space, and using coordinate patches to try to define the energy space is also problematic if one wants to retain the rotation invariance (ii).  Also, the metric structure one should select on the nonlinear space $\Energy$ is not immediately obvious\footnote{By our conventions, any classical data $\Phi_0$ will have a zero separation from its rotations $\Rot_U \Phi_0$; this is the analogue of the fact that two scalar functions on $\R^2$ which differ by a constant have a zero separation in the $\dot H^1(\R^2)$ (semi-)metric; this is related to the difficulty that the $\dot H^1(\R^2)$ norm fails to control the $L^\infty(\R^2)$ norm.}.
\end{remark}

In \cite{tao:heatwave2} we shall construct an energy space $\Energy$ as in Claim \ref{energy-claim}, by taking a metric completion of the classical data space $\S$, using the harmonic map heat flow to create a ``nonlinear Littlewood-Paley resolution'' with which one can measure distances.  We will then establish the following large-data local well-posedness claim in this space:

\begin{claim}[Large data local-wellposedness in the energy space]\label{lwp-claim}  For every time $t_0 \in \R$ and every initial data $\Phi_0 \in \Energy$ there exists a \emph{maximal lifespan} $I \subset \R$, and a \emph{maximal Cauchy development} $\phi: t \mapsto \phi[t]$ from $I \to \Energy$, which obeys the following properties:
\begin{itemize}
\item[(i)] (Local existence) $I$ is an open interval containing $t_0$.
\item[(ii)] (Strong solution) $\phi: I \to \Energy$ is continuous.
\item[(iii)] (Persistence of regularity) If $\Phi_0 = \iota( \tilde \Phi_0 )$ for some classical data $\tilde \Phi_0$, then there exists a classical wave map $(\tilde \phi,I)$ with initial data $\tilde \phi[0] = \tilde \Phi_0$ such that $\phi[t] = \iota(\tilde \phi[t])$ for all $t \in I$.  (Note that $\tilde \Phi_0$, $\tilde \phi$ are ambiguous up to the action of the rotation group $SO(m,1)$, but this ambiguity disappears after applying $\iota$.)
\item[(iv)] (Continuous dependence)  If $\Phi_{0,n}$ is a sequence of data in $\Energy$ converging to a limit $\Phi_{0,\infty}$, and $\phi_n: I_n \to \Energy$ and $\phi_\infty: I_\infty \to \Energy$ are the associated maximal Cauchy developments on the associated maximal lifespans, then for every compact subinterval $K$ of $I_\infty$, we have $K \subset I_n$ for all sufficiently large $n$, and $\phi_n$ converges uniformly to $\phi$ on $K$ in the $\Energy$ topology.
\item[(v)] (Maximality)  If $t_* \in \R$ is a finite endpoint of $I$, then $\phi(t)$ has no convergent subsequence in $\Energy$ as $t \to t_*$.
\end{itemize}
\end{claim}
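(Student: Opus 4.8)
The plan is to deduce the claim from the small-energy global theory and the energy-critical function space machinery for two-dimensional wave maps (as in \cite{tao:wavemap2}, \cite{tataru:wave2}), combined with the caloric gauge furnished by the harmonic map heat flow, following the now-standard paradigm for critical dispersive equations: (a) classical local theory for smooth data; (b) a priori estimates in energy-critical function spaces on short time intervals; (c) a stability theory in those spaces; and (d) a limiting argument exploiting the density of $\iota(\S)$ in $\Energy$ from Claim \ref{energy-claim}(i). To begin with, for classical data $\tilde\Phi_0 \in \S$ the classical local well-posedness theory (energy estimates in local coordinate charts, as in \cite{shsw:wavemap}), together with finite speed of propagation and the contractibility of $\H$, produces for each $t_0 \in \R$ a maximal time interval $I$ and a classical wave map $(\tilde\phi,I)$ with $\tilde\phi[t_0] = \tilde\Phi_0$, unique among classical solutions and depending continuously on $\tilde\Phi_0$ in the Schwartz topology, with $t \mapsto \iota(\tilde\phi[t])$ continuous into $\Energy$. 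This already supplies the right objects for (i)--(iii) at the classical level, so the real content of the claim is to upgrade continuous dependence, and the blow-up criterion (v), to the energy metric, i.e.\ to control solutions purely in terms of their energy-class data.

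The heart of the matter is an a priori estimate. Developing a classical solution $\tilde\phi$ along the harmonic map heat flow and passing to the associated caloric gauge, the wave maps system becomes a coupled system $\Box \psi_\alpha = \mathcal{N}(\psi,A)$ of wave equations with a trilinear, null-structured nonlinearity, where the connection field $A$ is reconstructed from $\psi$ by nonlocal heat-flow operators. Setting up $S^1$-type spaces adapted to the light cone (in the spirit of \cite{tao:wavemap2}, \cite{tataru:wave2}) and proving the requisite bilinear and trilinear estimates, I would run a bootstrap showing that data of energy at most $E$ generates a solution on a time interval whose length depends on a frequency envelope of the data rather than on $E$ alone: the high-frequency portion of the data has small energy and is handled directly by the small-energy theory, while the low-frequency portion --- smooth on the relevant scale --- is treated as a slowly varying magnetic background controlled by energy estimates, the two being coupled through a continuity argument. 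This is the step I expect to be the main obstacle, since it is exactly where the non-perturbative difficulty of large energy must be confronted; what rescues us is that here only a short, data-dependent lifespan is needed, the far harder task of propagating such bounds globally in time being deferred to \cite{tao:heatwave3}--\cite{tao:heatwave5}.

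With the a priori estimate in hand I would prove the companion stability statement: if two classical solutions have data close in the energy metric of Claim \ref{energy-claim}, then on any common short interval their caloric-gauge fields stay close in $S^1$, hence the solutions stay close in $\Energy$; this follows by running the same multilinear estimates on the equation for the difference of the two fields, using the continuity of $\iota$ and of $\T$ from Claim \ref{energy-claim}(iii)--(iv). The passage to general $\Phi_0 \in \Energy$ is then routine: approximate $\Phi_0$ by $\iota(\tilde\Phi_{0,n})$ with $\tilde\Phi_{0,n} \in \S$, observe that by stability the classical solutions form a Cauchy sequence in $C(K \to \Energy)$ on any fixed short interval $K$ on which they all exist, pass to the limit by completeness of $\Energy$, and define $I$ to be the union of all open intervals containing $t_0$ on which such a limit solution exists --- uniqueness of classical solutions, transported through the limit by stability, ensuring that these patch consistently and that $\phi$ is a genuine strong solution. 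Properties (i) and (ii) are then immediate, (iii) holds by construction together with classical uniqueness, and (iv) follows by a chaining argument: cover any compact $K \subset I_\infty$ by finitely many short intervals on each of which stability applies, which also forces $K \subset I_n$ for all large $n$ and gives uniform convergence of $\phi_n$ to $\phi_\infty$ on $K$.

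Finally, for the blow-up criterion (v), suppose $t_* \in \R$ is a finite endpoint of $I$ and $\phi(t_n) \to \Phi_* \in \Energy$ along some sequence $t_n \to t_*$. Solving the Cauchy problem with data $\Phi_*$ at time $t_*$ via the theory just constructed --- and noting that the lifespan is bounded below in a neighbourhood of $\Phi_*$, since the energy-small difference $\phi(t_n) - \Phi_*$ contributes, through the small-energy theory and stability, only a small perturbation --- produces a solution on an interval $(t_* - \delta, t_* + \delta)$. For $n$ large, the solution with data $\phi(t_n)$ at time $t_n$ therefore exists on an interval reaching past $t_*$; by uniqueness it agrees with $\phi$ on $I \cap (t_* - \delta, t_*)$ and hence extends $\phi$ beyond $t_*$, contradicting the maximality of $I$. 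Thus $\phi(t)$ admits no convergent subsequence in $\Energy$ as $t \to t_*$, which is (v).
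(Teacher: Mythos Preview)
This paper does not prove Claim~\ref{lwp-claim}. It is explicitly stated as one of the five \emph{assumptions} on which Theorem~\ref{main} is conditioned, and its proof is deferred to the sequel \cite{tao:heatwave3}; see the paragraph immediately following the remark after Claim~\ref{lwp-claim} (``We shall establish Claim~\ref{lwp-claim} in \cite{tao:heatwave3} using the harmonic heat flow (i.e.\ the caloric gauge) to renormalise the wave maps equation, as indicated in \cite{tao:forges}''). So there is no proof in this paper to compare your proposal against.

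That said, your sketch is broadly in line with the strategy the paper announces for the sequel: caloric gauge via harmonic map heat flow, $S^1$-type function spaces as in \cite{tao:wavemap2}, \cite{tataru:wave2}, an a~priori estimate with data-dependent lifespan, a stability lemma, and a density/limiting argument using Claim~\ref{energy-claim}(i). The one place where your outline is optimistic is the ``main obstacle'' paragraph: treating the low-frequency part as a slowly varying background and the high-frequency part by small-energy theory is the right heuristic, but in practice the large-data a~priori bound for wave maps in two dimensions is a substantial piece of work (null-form estimates in the caloric gauge, frequency envelopes, control of the heat-flow reconstruction of $A$ from $\psi$), and is precisely what the sequel papers are devoted to. Your derivation of (v) from (i)--(iv) plus a local-in-time lower bound on the lifespan is the standard argument and would go through once those pieces are in place.
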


\begin{remark}
This type of claim has already been established in \cite{tataru:wave3} (see also \cite{tataru:survey}) when the target manifold $\H$ is replaced by one which can be uniformly embedded into a Euclidean space.  In the case of small energy, a relevant regularity result in this direction appears in \cite{krieger:2d} (see also \cite{tao:wavemap2} for an analogous result when the target is a sphere).  
If the energy space is replaced by a slightly smaller Besov space (which, in contrast to the energy space, does control the boundedness and continuity of solutions), then this claim essentially appears in \cite{tataru:wave2}.  For semilinear equations for which solutions at the critical regularity can be constructed by iteration, results such as Claim \ref{lwp-claim} are standard consequences of the function space estimates used in the iteration scheme; see e.g. \cite{tao:cbms}.
\end{remark}

\begin{remark}
The author does not claim that these maximal Cauchy developments solve the wave maps equation \eqref{cov} in any weak (distributional) sense, because the author does not know the most suitable way in which to set up the theory of distributions for finite energy functions taking values\footnote{This problem can be partly resolved by selecting an orthonormal frame and working purely with the derivative map, as in \cite{shsw:wavemap}, \cite{nahmod}; but then one has to quotient out by the gauge invariance, which causes a separate set of technical issues to arise.  These issues can mostly be resolved by using the \emph{caloric gauge} from \cite{tao:forges}; see \cite{tao:heatwave3} for further discussion.} in hyperbolic space.  However, Claim \ref{energy-claim} and Claim \ref{lwp-claim} do allow us to approximate any of these abstract maximal Cauchy developments as the limit of smooth solutions (although the limit is in the sense of uniform convergence in the abstract space $\Energy$), which turns out to be sufficient for our purposes\footnote{As a rule of thumb, we can manipulate energy class solutions as if they were classical as long as one only considers \emph{first} derivatives of the solution and not second derivatives.  In particular, one cannot directly access the wave maps equation \eqref{cov}.  However, the stress-energy $\T$ is well-defined as a distribution, and so one can still utilise the conservation law \eqref{conserv} in the sense of distributions, which serves as a (weak) substitute for \eqref{cov}.}.  In particular, many conservation laws, symmetries, and Morawetz-type estimates which can be rigorously derived for classical solutions, will extend to these maximal Cauchy developments by standard limiting arguments.  For instance, since the energy \eqref{energy-def} is preserved for classical solutions, we see from Claim \ref{lwp-claim} and Claim \ref{energy-claim} that it is also preserved for maximal Cauchy developments, and so we can meaningfully refer to the energy or momentum of a Cauchy development.  As we shall see in Lemma \ref{Maxcauchy} below, the concept of a maximal Cauchy development can be defined independently of any choice of initial time or initial data, and so we shall often refer to such developments without mention of initial conditions.
\end{remark}

We shall establish Claim \ref{lwp-claim} in \cite{tao:heatwave3} using the harmonic heat flow (i.e. the caloric gauge) to renormalise the wave maps equation, as indicated in \cite{tao:forges}.  We remark that the non-positive curvature of $\H$ plays a key role here, in forcing the harmonic heat flow to converge to a constant map (see \cite{eells}). Note that Conjecture \ref{conj} would follow immediately from Claim \ref{lwp-claim}(iii) and the following conjecture:

\begin{conjecture}[Global well-posedness in the energy space]\label{conj2}  Every maximal Cauchy development in Claim \ref{lwp-claim} is global, i.e. the maximal lifespan $I$ is always equal to $\R$.
\end{conjecture}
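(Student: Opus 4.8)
The plan is to argue by contradiction, following the concentration-compactness (minimal-energy blowup) paradigm now standard for energy-critical dispersive equations, and then to extract a rigid solution by analysing the stress-energy tensor, in the spirit of \cite{grillakis-energy}, \cite{tao:forges}.

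First I would suppose Conjecture \ref{conj2} fails, so that some maximal Cauchy development has a lifespan $I$ with a finite endpoint. Since developments of sufficiently small energy are global (small-energy theory), there is a critical threshold
\[ E_{\mathrm{crit}} := \inf\{\, \E(\phi) : \phi \text{ a non-global maximal Cauchy development} \,\} \in (0,\infty). \]
Using the completeness of $\Energy$ and the continuity of $\iota$, $\E$, $\T$ from Claim \ref{energy-claim}, together with the continuous-dependence and maximality properties of Claim \ref{lwp-claim} and the isometric symmetry actions \eqref{space-trans-data}--\eqref{scaling-data}, I would run a Bahouri--G\'erard style profile decomposition on a minimizing sequence of near-critical developments. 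Energy additivity under the decomposition, combined with a stability/perturbation argument showing that a superposition of well-separated sub-critical profiles is global, forces all but one profile to be trivial; what survives is a non-global maximal Cauchy development $\phi$ of energy exactly $E_{\mathrm{crit}}$ that is \emph{almost periodic modulo the symmetries} \eqref{space-trans-data}--\eqref{scaling-data}: there are $x:I\to\R^2$ and $\lambda:I\to(0,\infty)$ so that $\{\, \Dil_{\lambda(t)}\Trans_{x(t)}\phi[t] : t\in I \,\}$ is precompact in $\Energy$.

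Next I would analyse this almost periodic $\phi$ using only the distributional conservation law \eqref{conserv} for $\T$ and the algebraic identity \eqref{destress} --- both legitimate in the energy class, since they involve only first derivatives of $\phi$. Monotonicity of the energy flux through truncated backward light cones, Morawetz/virial identities, and the precompactness of the rescaled orbit together pin down the behaviour of $\lambda(t)$ as one approaches the endpoint of $I$ (or, in the global but non-scattering subcase, as $t\to\pm\infty$). Passing to a weak limit in $\Energy$ along a suitable sequence of times and scales, I expect to extract a nontrivial energy-class wave map of one of three rigid types: self-similar (finite-time blowup with $\lambda(t)\sim(T-t)^{-1}$), travelling (a Lorentz boost of a stationary solution), or degenerate/stationary (a finite-energy harmonic map $\R^2\to\H^m$). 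The remaining claims, to be established in the sequels, assert that none of these nontrivial objects exists --- the stationary/harmonic-map case ultimately because $\H^m$ is simply connected and of non-positive curvature, the self-similar and travelling cases by separate arguments --- contradicting the non-triviality just obtained. Hence $E_{\mathrm{crit}}=+\infty$, which is Conjecture \ref{conj2}; combined with Claim \ref{lwp-claim}(iii) this yields Conjecture \ref{conj}.

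The hard part is the first step. The profile decomposition has to be carried out directly in the nonlinear metric space $\Energy$, where there is no ambient linear structure and no Strichartz or scattering norm to organise the induction; for this I would rely on the harmonic-map-heat-flow (caloric gauge) ``nonlinear Littlewood--Paley'' resolution of \cite{tao:forges}, \cite{tao:heatwave2}, \cite{tao:heatwave3} to define and decompose profiles, and on a robust perturbation theory in the same framework to propagate the decomposition in time. Throughout, the maximality criterion Claim \ref{lwp-claim}(v) must substitute for the ``infinite spacetime norm'' hypothesis usually available in such arguments. A secondary technical issue is justifying the stress-energy manipulations of the second step at the level of energy-class rather than classical solutions, which one handles via the approximation-by-smooth-solutions afforded by Claims \ref{energy-claim} and \ref{lwp-claim}.
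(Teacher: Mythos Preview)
Your broad strategy matches the paper's: assume failure, extract an almost periodic minimal-energy blowup solution, analyse its stress-energy to produce a rigid (self-similar or travelling) solution, and contradict the claims deferred to the sequels. The organisation differs in that your ``hard first step'' --- the profile decomposition yielding the almost periodic solution --- is precisely Claim~\ref{minimal}, which the paper \emph{assumes} as a black box and defers to \cite{tao:heatwave4}, \cite{tao:heatwave5}; the present paper is devoted entirely to your second step.

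There your outline has two genuine gaps. First, the mechanism is more specific than ``Morawetz/virial identities pin down the behaviour of $\lambda(t)$'': the paper first reduces (Proposition~\ref{normas}, via symmetries and Lemma~\ref{compaq}) to a \emph{normalised ancient solution} on $(-\infty,0]$ with $N(t)\le 1$, and then applies the quantitative Morawetz inequality of Proposition~\ref{moraw} (after \cite{grillakis-energy}) to find $t_n\to-\infty$ along which the solution is asymptotically self-similar; the case split is then on $\alpha := \lim 1/(|t_n|N(t_n))$ and $v := \lim x(t_n)/t_n$. Second, your trichotomy self-similar/travelling/stationary is not the correct one. Stationary is merely the subcase $v=0$ of travelling with $|v|<1$ (handled by Claim~\ref{selfsim}(i)), but there is a separate \emph{lightlike} case $|v|=1$ that you have not accounted for: such a solution cannot be Lorentz-boosted to stationary, and ruling it out requires an additional stress-energy argument (Proposition~\ref{nolight}) producing data degenerate in the direction orthogonal to $v$, which contradicts the distinct Claim~\ref{nondeg} rather than Claim~\ref{selfsim}.
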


We now turn to the most technically difficult of all the five claims to establish.  
We shall refer to the restriction of a maximal Cauchy development to a subinterval as a \emph{partial Cauchy development}, and refer to maximal or partial Cauchy developments as \emph{energy class solutions}; as mentioned earlier, the reader is encouraged to view such solutions as abstract limits of smooth solutions.

\begin{definition}[Almost periodicity]\label{ap-def} An energy class solution $\phi: I \to \Energy$ is\footnote{Strictly speaking, we should refer to such solutions as ``almost periodic modulo dilation and translation symmetry'', but we shall omit the mention of the dilation and translation symmetries for brevity.} \emph{almost periodic} if there exists functions $N: I \to (0,+\infty)$ and $x: I \to \R^2$, and a compact set $K \subset \Energy$ such that
$$ \Dil_{N(t)} \Trans_{-x(t)} \phi[t] \in K$$
for all $t \in I$.  For instance, a (hypothetical) stationary wave map (i.e. a harmonic map), a travelling wave map (i.e. a Lorentz transform of a wave map) or self-similar wave map in the energy class would be almost periodic (in these cases, $K$ would just be a single point).  We refer to the functions $N = N_\phi$ and $x = x_\phi$ as the\footnote{These functions are not quite unique; for instance $N(t)$ is only determined up to multiplicative constants, and $x(t)$ is only determined up to an additive error of $O(1/N(t))$, but this lack of uniqueness will not be a concern for us.  Similarly, the compactness modulus $K$ is also not unique.} \emph{frequency scale function} and \emph{position function} of the almost periodic solution respectively.  The compact set $K$ will be referred to as the \emph{compactness modulus} of the solution $\phi$.
\end{definition}

\begin{remark} Informally, an almost periodic solution is concentrated spatially in the region $\{ x: x = x(t) + O( 1/N(t) ) \}$ and in frequency space in the region $\{ \xi: \xi = O( N(t) ) \}$ at each time $t \in I$, although it is not immediately obvious how to define the ``frequency'' of a map $\phi: \R^{1+2} \to \H$ into hyperbolic space.  See Lemma \ref{localise} for a partial formalisation of this intution in the physical (spatial) domain only.
\end{remark}

\begin{claim}[Minimal energy blowup solution]\label{minimal}  Suppose that Conjecture \ref{conj2} fails.  Then there exists an almost periodic maximal Cauchy development $\phi: I \to \Energy$ with non-zero energy.
\end{claim}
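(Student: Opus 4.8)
The plan is to run the standard Kenig--Merle style concentration-compactness argument, adapted to the wave maps setting as pioneered by Kenig--Merle and carried out for wave maps by Sterbenz--Tataru and (in the heat-flow framework) by the author. The idea is: if Conjecture \ref{conj2} fails, then there is a threshold energy $E_{\mathrm{crit}} > 0$ below which all maximal Cauchy developments are global (and have good global spacetime bounds), but such that there exist developments with energy arbitrarily close to $E_{\mathrm{crit}}$ which are not global (or lack the global bounds). One then extracts a minimising sequence of such ``bad'' solutions, applies a profile decomposition (a nonlinear Bahouri--G\'erard type decomposition in the energy space $\Energy$) to the initial data, and uses the minimality of the energy together with Claim \ref{lwp-claim}(iv) (continuous dependence / stability) to show that the profile decomposition must collapse to a single nontrivial profile. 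This single-profile solution, whose orbit modulo the scaling and translation symmetries must then be precompact in $\Energy$, is the desired almost periodic solution.

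Here is how I would organise the steps. \emph{Step 1 (Small-energy global regularity and a threshold).} First one needs the small-energy version of Conjecture \ref{conj2}: there is $\eps_0 > 0$ such that any maximal Cauchy development with energy $< \eps_0$ is global with good bounds; this is essentially the content of \cite{krieger:2d} (or \cite{tataru:wave2}) transplanted into the $\Energy$ framework, and I would cite it rather than reprove it. Define $E_{\mathrm{crit}}$ to be the supremum of energies $E$ such that all developments of energy $\le E$ are global (and, say, scatter / have finite global spacetime norm in the caloric gauge); by the small-energy theory $E_{\mathrm{crit}} \ge \eps_0 > 0$, and if Conjecture \ref{conj2} fails then $E_{\mathrm{crit}} < \infty$. \emph{Step 2 (Profile decomposition).} Take a sequence $\phi_n$ of maximal Cauchy developments with $\E(\phi_n) \to E_{\mathrm{crit}}$ that are not global (or violate the global bounds). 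After applying the symmetries \eqref{space-trans-data}, \eqref{scaling-data} and translating in time so that, say, the ``blowup'' or ``large norm'' region is near $t = 0$, one decomposes the data $\phi_n[0] \in \Energy$ into a superposition of rescaled, translated profiles plus a small error, with the energies of the profiles summing to (at most) $E_{\mathrm{crit}}$ in the limit (Pythagorean decomposition of energy). This is where one uses that $\Energy$ is built so that the harmonic-map-heat-flow resolution behaves like a nonlinear Littlewood--Paley decomposition; the profile decomposition is the nonlinear analogue of Bahouri--G\'erard.

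\emph{Step 3 (Reduction to one profile).} Using the sub-threshold global bounds for each profile whose energy is strictly below $E_{\mathrm{crit}}$, together with a nonlinear superposition / stability argument based on Claim \ref{lwp-claim}(iv), one shows that if there were two or more nontrivial profiles then $\phi_n$ would itself inherit the good global bounds, contradicting the choice of $\phi_n$. Hence there is exactly one nontrivial profile, carrying all of the energy $E_{\mathrm{crit}}$; by Claim \ref{energy-claim}(v) this profile is nonzero, so the limiting solution $\phi$ has nonzero energy. \emph{Step 4 (Almost periodicity).} Finally, apply the same profile-decomposition-plus-minimality argument, now to translates of $\phi$ along its own lifespan: for any sequence of times $t_n \in I$, a single-profile decomposition of $\Dil_{N(t_n)} \Trans_{-x(t_n)} \phi[t_n]$ for suitable scales $N(t_n)$ and positions $x(t_n)$ must hold, which says precisely that the orbit $\{ \Dil_{N(t)} \Trans_{-x(t)} \phi[t] : t \in I \}$ is precompact in $\Energy$; taking $K$ to be its closure gives the compactness modulus in Definition \ref{ap-def}. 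This produces the almost periodic maximal Cauchy development with nonzero energy.

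The main obstacle is Step 2--3: setting up the profile decomposition in the abstract metric space $\Energy$ (rather than a linear Sobolev space) and proving the nonlinear superposition estimate that lets one add up sub-threshold profiles. In the linear-data, semilinear setting this rests on an orthogonality (Pythagoras) identity for the critical norm and on perturbation theory in the iteration spaces; here the target is a manifold, the energy space is a completion with a somewhat implicit metric, and one must work through the caloric gauge to access any useful function-space estimates. I expect this is exactly why the paper defers the hard analytic inputs (Claims \ref{energy-claim}, \ref{lwp-claim}, and the small-energy theory) to the sequels and only needs, in the present paper, the formal extraction argument: given those claims as black boxes, Steps 1--4 are the soft concentration-compactness machinery, and the genuinely delicate estimates live elsewhere.
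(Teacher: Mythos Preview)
The paper does not prove this statement at all. Claim \ref{minimal} is one of the five \emph{assumed} claims on which Theorem \ref{main} is conditioned; the paper explicitly defers its proof to the sequels \cite{tao:heatwave4}, \cite{tao:heatwave5}, and in this paper it functions purely as a hypothesis. So there is no ``paper's own proof'' to compare against, only the heuristic sketch in the Remark and footnote following the claim.

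That said, your outline is exactly the strategy indicated in that Remark: define a critical energy, take a minimising sequence, run a profile decomposition, reduce to one profile by sub-threshold bounds plus perturbation theory, and then repeat along the orbit to get precompactness. Your final paragraph, however, gets the division of labour backwards. You suggest that once Claims \ref{energy-claim} and \ref{lwp-claim} are granted, Steps 2--3 become ``soft concentration-compactness machinery''. The paper's view is the opposite: the genuinely hard analytic content of Claim \ref{minimal} \emph{is} Steps 2--3 --- setting up a Bahouri--G\'erard type profile decomposition in the nonlinear space $\Energy$ and proving the nonlinear superposition/perturbation estimates that let one sum sub-threshold profiles --- and these require substantially more than the continuous-dependence statement in Claim \ref{lwp-claim}(iv). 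In particular one needs a quantitative stability theory (comparing exact solutions to approximate ones in the iteration spaces of the caloric gauge) and nonlinear analogues of Littlewood--Paley projections, neither of which is part of Claims \ref{energy-claim} or \ref{lwp-claim} as stated. That is precisely why the paper packages this entire extraction as a separate black-box claim and devotes two further papers to it, rather than deriving it here from the other claims.
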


\begin{remark}
Analogous claims to Claim \ref{minimal} have been established for the energy-critical NLS in \cite{borg:scatter}, \cite{borg:book}, \cite{gopher}, \cite{RV}, \cite{thesis:art}, \cite{merlekenig}, for the mass-critical NLS in \cite{keraani}, \cite{compact}, for the energy-critical NLW in \cite{mkwave} (see also \cite{Evian}), and for the energy-critical and mass-critical Hartree equations in \cite{mxz1}, \cite{mxz2}, \cite{mxz3}, \cite{mxz4}.  The arguments in these papers rely on perturbation theory (generalising local well-posedness results such as Claim \ref{lwp-claim} to compare approximate solutions with exact solutions), as well as various spatial and frequency decompositions to split any solution which is not behaving almost periodically into two almost non-interacting components of strictly smaller energy; see \cite{tao:cdm} for further discussion\footnote{Intuitively, one can justify Claim \ref{minimal} as follows.  If Conjecture \ref{conj2} failed, then there should exist a \emph{critical energy} $E_{\operatorname{crit}} > 0$ below which the solution exists globally and obeys ``scattering'' estimates (e.g. are bounded in some suitable spacetime norm).  We then take a sequence of solutions with energy approaching $E_{\operatorname{crit}}$ which do not obey scattering estimates in the limit.  Such solutions must be localised in space or frequency, otherwise they could be decoupled into the superposition of two weakly interacting components of strictly smaller energy, which should contradict the definition of $E_{\operatorname{crit}}$.  This space and frequency localisation then leads to the desired almost periodicity in the limit.}  We will establish this claim in \cite{tao:heatwave4}, \cite{tao:heatwave5}, by refining the methods used to prove Claim \ref{lwp-claim} and establishing various non-linear analogues of spatial and frequency decompositions used in the earlier papers (which dealt with scalar models, in which standard decompositions such as Littlewood-Paley projections could be used without difficulty).
\end{remark}

In view of Claim \ref{minimal}, we see that to establish Conjecture \ref{conj}, it suffices to show that almost periodic maximal Cauchy developments of strictly positive energy do not exist.  Three possible candidates for such solutions would be 
\begin{itemize}
\item \emph{Stationary solutions} $\phi: \R \times \R^2 \to \H$ in which $\phi(t,x) = \phi(x)$;
\item More generally, \emph{travelling solutions} $\phi: \R \times \R^2 \to \H$ in which $\phi(t,x) = \phi(x-vt)$ for some velocity $v \in \R^2$; and
\item \emph{Self-similar solutions} $\phi: (0,+\infty) \times \R^2 \to \H$ in which $\phi(t,x) = \phi(x/t)$, and which are constant outside of the light cone $\{ (t,x): |x| \leq t \}$.  
\end{itemize}

Note that for physical reasons one expects finite energy travelling solutions to only travel at speeds $|v| < 1$ less than the speed of light $c=1$.

To extend such solution concepts from the classical context to the energy space context, we
use the stress-energy tensor $\T$:

\begin{definition}[Travelling and self-similar solutions]\label{travel}  An energy class solution $\phi: I \to \Energy$ is said to be \emph{travelling} with velocity $v \in \R^2$ if\footnote{Note that quadratic quantities such as the left-hand side of \eqref{static-def}, being defined from the stress-energy tensor $\T \in L^1(\R^2)$, are only defined up to almost everywhere equivalence, and so identities such as \eqref{static-def} should be understood to hold almost everywhere (or in the distributional sense) rather than everywhere.  We will not comment on this minor technical point further in this paper.} 
\begin{equation}\label{static-def}
|\partial_t \phi + v \cdot \nabla_x \phi|_{h(\phi)}^2 \equiv 0
\end{equation}
throughout $I \times \R^2$, where the quantity in \eqref{static-def} is defined via the stress-energy tensor $\T$ using \eqref{destress}.  Similarly, an energy class solution $\phi: I \to \Energy$ is said to be \emph{self-similar} if $\T = 0$ outside of the light cone $\{ (t,x): |x| \leq |t| \}$, and if
\begin{equation}\label{selfsimilar-def}
|t \partial_t \phi + x \cdot \nabla_x \phi|_{h(\phi)}^2 \equiv 0
\end{equation}
throughout $I \times \R^2$, where again we use \eqref{destress} to define the left-hand side of \eqref{selfsimilar-def} through the stress-energy tensor.
\end{definition}

\begin{remark} In the case of smooth solutions, the condition \eqref{static-def} is equivalent to the symmetry $\phi[t'] = \Trans_{v(t'-t)} \phi[t]$ for $t,t' \in I$, and similarly \eqref{selfsimilar-def} is equivalent to $\phi[t'] = \Dil_{t'/t} \phi[t]$ for $t,t' \in I$ with the same sign.  In particular, such solutions are almost periodic. However it is not immediately obvious whether the same equivalence (and almost periodicity) holds for solutions in the energy class.  Similarly, in the smooth case a travelling solution with $|v| < 1$ can be Lorentz transformed into a stationary solution (with $v=0$), but it is not immediately obvious that the same can be done for energy class solutions, even if they are global.
\end{remark}

\begin{claim}[No non-trivial self-similar or travelling energy class solutions]\label{selfsim} \  
\begin{itemize}
\item[(i)] The only almost periodic solutions $\phi: \R \to \Energy$ which are travelling with some velocity $v$ with $|v| < 1$ are the constant (i.e. zero-energy) solutions.
\item[(ii)] The only almost periodic solutions $\phi: (-\infty,0) \to \Energy$ which are self-similar are the constant solutions.
\end{itemize}
\end{claim}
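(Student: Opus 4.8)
The plan is to treat both parts by the same two-step strategy: first use the defining symmetry to reduce the wave-map equation to an \emph{elliptic} problem, namely a harmonic map from a two-dimensional Riemannian surface into $\H^m$; then exploit the non-positivity of the sectional curvature of $\H^m$ to force that harmonic map to be constant, via a Bochner (Eells--Sampson) argument in case (i) and a maximum-principle (Dirichlet uniqueness) argument in case (ii). Almost periodicity is automatic in both settings (a travelling or self-similar solution is almost periodic with one-point compactness modulus), so it plays no further role beyond matching the hypotheses produced by Claim \ref{minimal}.

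\emph{Case (i): travelling solutions.} The relation \eqref{static-def} forces $\phi$ to depend only on $y := x - vt$; writing $\phi(t,x) = \psi(y)$ and substituting into the wave-map equation \eqref{cov}, all time derivatives collapse onto $y$-derivatives and one is left with
\begin{equation*}
(\delta_{jk} - v_j v_k)\,\partial_j \partial_k \psi + \Gamma(\psi)\,(\delta_{jk} - v_j v_k)\,\partial_j \psi\, \partial_k \psi = 0,
\end{equation*}
where $\Gamma$ denotes the Christoffel symbols of $\H^m$ in local coordinates. Since $|v|<1$, the symbol $(\delta_{jk}-v_jv_k)\xi_j\xi_k \geq (1-|v|^2)|\xi|^2$ is positive-definite, so after the linear change of variables diagonalising $\delta_{jk}-v_jv_k$ this is exactly the equation for a harmonic map $\psi : \R^2 \to \H^m$ with respect to a flat metric on the domain; finiteness of the energy \eqref{energy-def} of $\phi$ bounds $\int_{\R^2} |\nabla \psi|^2$, so $\psi$ is a finite-energy (weakly) harmonic map and hence, by the two-dimensional regularity theory for harmonic maps, smooth. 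The Bochner--Eells--Sampson formula now reads
\begin{equation*}
\tfrac12 \Delta e(\psi) = |\nabla d\psi|^2 + \langle \operatorname{Ric}^{\R^2} d\psi, d\psi \rangle - \sum_{i,j} \langle R^{\H^m}(d\psi\, e_i, d\psi\, e_j) d\psi\, e_j, d\psi\, e_i \rangle,
\end{equation*}
in which the Ricci term vanishes (flat domain) and the curvature term is non-negative (sectional curvature $-1 \leq 0$), so the energy density $e(\psi)$ is subharmonic on $\R^2$. A non-negative subharmonic function in $L^1(\R^2)$ vanishes identically, since $e(\psi)(x_0) \leq \frac{1}{\pi R^2}\int_{B(x_0,R)} e(\psi) \to 0$ as $R \to \infty$. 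Thus $d\psi \equiv 0$, $\phi$ is constant, and $\E(\phi) = 0$ by Claim \ref{energy-claim}(v).

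\emph{Case (ii): self-similar solutions.} Here \eqref{selfsimilar-def} forces $\phi$ to depend only on $y := x/(-t)$ inside the backward light cone $\{|x| < -t\}$, while $\T = 0$ (hence $\phi$ locally constant) outside it. Writing $\phi(t,x) = \psi(y)$ with $|y| < 1$ and substituting into \eqref{cov}, the $t^{-2}$ prefactors cancel and $\psi$ is found to solve the harmonic-map equation for $\psi : (D,h) \to \H^m$ on the disc $D = \{|y| < 1\}$, where $h = \frac{dr^2}{1-r^2} + r^2\, d\theta^2$; the substitution $r = \sin\rho$ identifies $(D,h)$ isometrically with the open upper hemisphere $\{\rho < \pi/2\} \subset S^2$, the equator $\{\rho = \pi/2\}$ (at finite distance) corresponding to the light cone. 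Next, by self-similarity the energy inside the backward light cone with vertex at the origin, $E(t) := \int_{|x|<-t} \T_{00}(t,x)\, dx$, is independent of $t \in (-\infty,0)$; combined with the flux (monotonicity) identity for the conserved stress-energy tensor \eqref{conserv} on this cone, this forces the energy flux through the mantle to vanish, and since that flux is the integral over the mantle of a non-negative density controlling $|\nabb\phi|^2_{h(\phi)}$, we conclude $\nabb\phi = 0$ a.e.\ on the mantle, i.e.\ $\psi$ is \emph{constant}, say equal to $q$, on the equator $\partial D$. So $\psi$ is a finite-energy harmonic map from the hemisphere into $\H^m$ solving the Dirichlet problem with constant boundary data $q$. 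Since $\H^m$ is a Cartan--Hadamard manifold, this Dirichlet problem has at most one solution: for any two solutions the distance function between them is subharmonic (by convexity of $\dist$ along geodesics in non-positive curvature) and vanishes on $\partial D$, hence vanishes; comparing $\psi$ with the constant map $q$ gives $\psi \equiv q$. Therefore the stress-energy tensor of $\phi$ vanishes identically, so $\E(\phi) = 0$ and $\phi$ is constant by Claim \ref{energy-claim}(v).

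The main obstacle is carrying out the reduction rigorously for an energy-class solution $\phi$, which is only an abstract limit of smooth solutions rather than a pointwise-defined map: one must make sense of the statements ``$\phi$ depends only on $y$'' and ``$\psi$ is a (weakly) harmonic map'', and --- in case (ii) --- control the behaviour of $\psi$ near the equator finely enough to register the Dirichlet condition. Here one leans on the structure of the energy space from Claim \ref{energy-claim}, persistence of regularity from Claim \ref{lwp-claim}(iii), the distributional conservation law \eqref{conserv} together with \eqref{destress} (which recovers all quadratic expressions in $\partial\phi$ from $\T$), and the two-dimensional partial-regularity theory for finite-energy harmonic maps; once $\psi$ is established as a genuine smooth harmonic map with the domain and boundary data stated above, the Bochner and maximum-principle arguments close the proof.
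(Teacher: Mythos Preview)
The paper does not actually prove Claim~\ref{selfsim}; it is one of the five claims deferred to the sequel \cite{tao:heatwave2}, and the paper only offers the Remark following the claim as a sketch. Your proposal is consistent with that sketch and is essentially correct at the same level of rigor.

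For part~(i) the approaches coincide: the Remark suggests Bochner--Weitzenb\"ock for the stationary case together with Lorentz invariance to handle $0<|v|<1$, and your diagonalisation of the symbol $\delta_{jk}-v_jv_k$ is exactly the Lorentz boost written in spatial coordinates, after which you run the same subharmonic--$L^1$ argument on the energy density.

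For part~(ii) there is a genuine, if minor, divergence. The paper's Remark points to Shatah--Struwe \cite[\S7.5]{shatah-struwe}, which reduces to a finite-energy harmonic map from the (hyperbolic) disc and then invokes Lemaire's theorem \cite{lemaire} (harmonic maps from a surface with boundary into a target with $\pi_2=0$, with constant boundary values, are constant). You instead view the domain as the open hemisphere---which is legitimate, since in two dimensions the harmonic map equation is conformally invariant and your metric $d\rho^2+\sin^2\rho\,d\theta^2$ is conformal to the hyperbolic one on the open disc---and replace Lemaire by the Dirichlet uniqueness argument specific to Cartan--Hadamard targets (subharmonicity of $\dist(\psi,q)$). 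Your route is more elementary for the hyperbolic target because it uses the curvature sign directly rather than the topological hypothesis $\pi_2(N)=0$; the Shatah--Struwe/Lemaire route is more general in the target. Both arguments need the same input from the flux identity (vanishing angular derivative on the light cone) and both face the same boundary-regularity issue you flag: one must show that the finite-energy harmonic map $\psi$ genuinely attains the constant boundary value in a sense strong enough to run either Lemaire or the maximum principle. That is precisely the ``little care'' the Remark says is needed for energy-class solutions, and neither your proposal nor the paper attempts it here.
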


\begin{remark} In the context of smooth finite energy solutions, the non-existence of non-trivial stationary solutions (i.e. travelling solutions with $v=0$) can be easily established from the \emph{Bochner-Weitzenb\"ock identity}
$$ \Delta |\nabla \phi|_{h(\phi)}^2 = 2 |\hbox{Hess}(\phi)|_{h(\phi)}^2 + \sum_{i,j} |\phi_i \wedge \phi_j|_{h(\phi)}^2 \geq 0$$
for such solutions, which relies crucially on the non-positive curvature of $\H$ to give both terms on the right-hand side the same sign.  The analogous claim for travelling solutions with $|v| < 1$ can then be deduced by the Lorentz invariance of the wave maps equation.  To rule out the self-similar solutions is a little trickier, and was done in \cite[Section 7.5]{shatah-struwe}, relying crucially on earlier work in \cite{lemaire}.  Morally speaking, these results already imply Claim \ref{selfsim}, but due to our abstract definition of an energy class solution one has to take a little care in adapting the previous arguments to establish this claim, which we do in \cite{tao:heatwave2}.
\end{remark}

Finally, we need a technical non-degeneracy claim about the energy space $\Energy$, which is needed to rule out the case of solutions travelling at the speed of light, $|v|=1$:

\begin{claim}[No non-trivial shift-invariant finite energy data]\label{nondeg}  Let $v \in \R^2$ be such that $|v|=1$, and let $\Phi = (\phi_0,\phi_1) \in \Energy$ be such that $|\phi_1 + v \cdot \nabla \phi_0|_{h(\phi)}^2, |w \cdot \nabla \phi_0|_{h(\phi_0)}^2 \equiv 0$ whenever $w \in \R^2$ is orthogonal to $v$, and this expression is defined in terms of the stress-energy tensor $\T(\Phi)$ using \eqref{destress}.  Then $\Phi$ has zero energy.
\end{claim}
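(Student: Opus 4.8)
The plan is to first recast both hypotheses as statements about the derivative Gram tensor $G_{\alpha\beta}:=\langle\partial_\alpha\phi,\partial_\beta\phi\rangle_{h(\phi_0)}$ of $\Phi=(\phi_0,\phi_1)$ (with $\partial_0\phi$ read as $\phi_1$), defined from $\T=\T(\Phi)$ via \eqref{destress} as $G_{\alpha\beta}=\T_{\alpha\beta}-g_{\alpha\beta}\operatorname{tr}(\T)$; contracting \eqref{destress} with $g^{\alpha\beta}$ gives the useful identity $\operatorname{tr}(\T)=-\tfrac12 g^{\alpha\beta}G_{\alpha\beta}$. The structural input is that $(G_{\alpha\beta}(x))$ is a symmetric positive semi-definite matrix for a.e.\ $x$, for every $\Phi\in\Energy$: this is clear for $\Phi=\iota(\Phi_0)$ with $\Phi_0\in\S$, since $G$ is then a genuine Gram matrix of tangent vectors to $\H$, and it propagates to all of $\Energy$ via density (Claim~\ref{energy-claim}(i)), the $L^1$-continuity of $\T$ (Claim~\ref{energy-claim}(iv)), and passage to an a.e.\ convergent subsequence. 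Put $\ell^\alpha:=(1,v_1,v_2)$, which is null since $g_{\alpha\beta}\ell^\alpha\ell^\beta=|v|^2-1=0$, and $\bar w^\alpha:=(0,w_1,w_2)$ for $w\perp v$; using \eqref{destress} and the nullity of $\ell$, the first hypothesis becomes $G_{\alpha\beta}\ell^\alpha\ell^\beta=\T_{\alpha\beta}\ell^\alpha\ell^\beta=|\phi_1+v\cdot\nabla\phi_0|_{h(\phi_0)}^2=0$ and the second becomes $G_{\alpha\beta}\bar w^\alpha\bar w^\beta=|w\cdot\nabla\phi_0|_{h(\phi_0)}^2=0$. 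Hence for a.e.\ $x$ the PSD matrix $G(x)$ kills the two linearly independent vectors $\ell$ and $\bar w$, so it has rank at most $1$ and $G_{\alpha\beta}(x)=\mu(x)\,c_\alpha c_\beta$ with $\mu:=\T_{00}\geq 0$ and $c_\alpha:=(-1,v_1,v_2)$ (the unique direction orthogonal to $\ell$ and $\bar w$). Since $c$ is also null, $\operatorname{tr}(\T)=-\tfrac12\mu\,g^{\alpha\beta}c_\alpha c_\beta=0$, so in fact $\T_{\alpha\beta}=G_{\alpha\beta}=\mu\,c_\alpha c_\beta$: the stress-energy of $\Phi$ is exactly that of a null plane wave travelling in the direction $v$, with energy density $\mu=\T_{00}\in L^1(\R^2)$ and $\E(\Phi)=\int_{\R^2}\mu$.

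The second step is to show that $\Phi$ is invariant under the spatial translations transverse to $v$, i.e.\ $\Trans_{s v^\perp}\Phi=\Phi$ in $\Energy$ for all $s\in\R$, where $v^\perp$ is a unit vector orthogonal to $v$. Morally the second hypothesis says $\phi_0$ is constant along every line $x+\R v^\perp$, and then so is $\phi_1=-v\cdot\nabla\phi_0$ by the first hypothesis. Granting this invariance and the equivariance of $\T:\Energy\to L^1$ under $\Trans$ (true on $\iota(\S)$ by inspection, hence on $\Energy$ by density and continuity), we get $\mu(\cdot-sv^\perp)=\mu$ a.e.\ for all $s$, so by Fubini $\mu$ agrees a.e.\ with a function $\bar\mu$ of the coordinate $x\cdot v$ alone. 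Writing $x=rv+sv^\perp$ then gives $\E(\Phi)=\int_{\R^2}\mu\,dx=\bigl(\int_\R\bar\mu(r)\,dr\bigr)\bigl(\int_\R ds\bigr)$; the second factor is infinite while $\E(\Phi)$ is finite, so $\bar\mu=0$ a.e., whence $\mu\equiv 0$ and $\E(\Phi)=0$. (Equivalently: the energies of $\Phi$ on the strips $\{x: x\cdot v^\perp\in[k,k+1)\}$, $k\in\Z$, are all equal by translation invariance and sum to the finite quantity $\E(\Phi)$, so each vanishes.) If one wants, Claim~\ref{energy-claim}(v) then upgrades this to $\Phi$ being constant.

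I expect the translation-invariance step to be the only real obstacle. For classical data $\Phi=\iota(\Phi_0)$, $\Phi_0\in\S$, it is trivial: the hypotheses force $\nabla\phi_0$ to be everywhere parallel to $v$, so $\phi_0$ is constant along every $v^\perp$-line, but being Schwartz modulo a constant it must then equal $\phi_0(\infty)$ identically, so $\phi_1=-v\cdot\nabla\phi_0=0$ and $\E(\Phi)=0$ directly. All the difficulty is thus in the limiting regime, where elements of $\Energy$ are not genuine functions and the energy metric controls neither boundedness nor continuity; in particular one cannot conclude softly, since a sequence $\iota(\Phi_{0,n})\to\Phi$ furnished by density need not satisfy the hypotheses, and the (closed) set of $\Phi$ that do need not be the closure of its intersection with $\iota(\S)$. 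What one really needs is a quantitative bound of the shape $\dist_\Energy(\Trans_{s v^\perp}\Phi,\Phi)\lesssim F(s)\,\bigl(\int_{\R^2}|v^\perp\cdot\nabla\phi_0|_{h(\phi_0)}^2\,dx\bigr)^{1/2}$, i.e.\ that a spatial translation moves a point of $\Energy$ by at most the directional energy along that translation; this is not among the stated claims, but it should fall out of the explicit harmonic-map-heat-flow (caloric gauge) model of $\Energy$ and its nonlinear Littlewood--Paley resolution constructed in \cite{tao:heatwave2}, which is where I would actually carry out this step. The first step, by contrast, is pure linear algebra on $\T$ together with the dominant-energy positivity inherited from classical wave maps, and is routine.
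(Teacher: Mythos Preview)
This claim is not proved in the present paper at all: it is one of the five hypotheses the paper assumes, and the only information given about its proof is the remark that it will be established in \cite{tao:heatwave2} ``using the `non-linear Littlewood-Paley decomposition' of $\Phi$ provided by the harmonic map heat flow.'' So there is no argument here to compare against; your proposal is really a sketch of how such a sequel proof might go.

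Your first step is correct and cleanly done. The reduction of the two hypotheses to the rank-one null structure $\T_{\alpha\beta}=\mu\,c_\alpha c_\beta$ with $c=(-1,v)$ and $\mu=\T_{00}\in L^1(\R^2)$ is pure linear algebra on the PSD Gram matrix $G$, and the positivity passes to all of $\Energy$ by exactly the density-and-$L^1$-continuity argument you describe.

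Your diagnosis of the second step is also accurate, and this is the genuine content of the claim. From the abstract properties of $\Energy$ listed in Claim~\ref{energy-claim} alone one cannot conclude $\Trans_{sv^\perp}\Phi=\Phi$: nothing in (i)--(v) ties the $\Energy$-metric to directional energies, and as you note, the set of $\Phi$ satisfying the hypotheses is closed but need not be approximable by classical data satisfying them. The quantitative bound you propose, of the form $\dist_{\Energy}(\Trans_{sw}\Phi,\Phi)\lesssim_s \bigl(\int |w\cdot\nabla\phi_0|_{h(\phi_0)}^2\bigr)^{1/2}$, is exactly the sort of statement the caloric-gauge model of $\Energy$ is designed to supply, and this is consistent with the paper's hint that the proof lives in \cite{tao:heatwave2}. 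Once that invariance is available, your Fubini/strip argument finishing with $\E(\Phi)=0$ is immediate.

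In short: your outline is the right shape, you have correctly located the only nontrivial step, and that step is precisely the one the paper defers to the concrete heat-flow construction of the energy space.
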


\begin{remark} Formally, the hypotheses in Claim \ref{nondeg} imply that $\Phi$ has the shift invariance $\Trans_{sv}(\Phi) = \Phi$ for all $s \in \R$, and so the claim is essentially saying that there are no shift-invariant data in $\Energy$ other than the constant data.
\end{remark}

We will establish Claim \ref{nondeg} in \cite{tao:heatwave2}, using the ``non-linear Littlewood-Paley decomposition'' of $\Phi$ provided by the harmonic map heat flow.

\subsection{Main result}

We can now state the main result of this paper.

\begin{theorem}\label{main}  Assume that Claims \ref{energy-claim}, \ref{lwp-claim}, \ref{minimal}, \ref{selfsim}, \ref{nondeg} hold (with the same definition of energy space and maximal Cauchy development across these claims, of course).  Then Conjecture \ref{conj2} (and hence Conjecture \ref{conj}) holds.
\end{theorem}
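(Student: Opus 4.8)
The plan is to argue by contradiction. Suppose Conjecture \ref{conj2} fails; then by Claim \ref{minimal} there exists an almost periodic maximal Cauchy development $\phi: I \to \Energy$ of non-zero energy, with frequency scale function $N = N_\phi$, position function $x = x_\phi$, and compactness modulus $K$. The strategy is to feed this solution into the stress-energy machinery (in the spirit of \cite{grillakis-energy}, \cite{tao:forges}): by analysing the conservation law \eqref{conserv} over carefully chosen spacetime regions, one extracts additional rigidity on $\phi$, until one forces $\phi$ to be either travelling (in the sense of \eqref{static-def}), self-similar (in the sense of \eqref{selfsimilar-def}), or a degenerate solution of the type excluded by Claim \ref{nondeg}. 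Since Claims \ref{selfsim} and \ref{nondeg} rule out all three possibilities for a non-zero-energy solution, we obtain the desired contradiction, whence Conjecture \ref{conj2} holds; Conjecture \ref{conj} then follows from Claim \ref{lwp-claim}(iii).

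Carrying this out, I would proceed in the following steps. First, I would record the basic reductions: a maximal Cauchy development can be described without reference to initial time or data (Lemma \ref{Maxcauchy}), and one can normalise the solution using the scaling, translation, and time-reversal symmetries from Claim \ref{energy-claim}(iii); in particular one may split into the case where an endpoint of $I$ is finite (finite-time blowup) and the case $I$ (semi-)infinite. Second, using almost periodicity and the compactness of $K$, I would establish a spatial localisation of the energy density: at each time $t$, all but an $\epsilon$-fraction of the energy lives in the ball $\{|x - x(t)| \lesssim C(\epsilon)/N(t)\}$ (this is the content of Lemma \ref{localise} referenced in the excerpt), together with control on how fast $N(t)$ and $x(t)$ can vary, obtained by feeding finite speed of propagation (in the weak/stress-energy form) into the conservation law \eqref{conserv}. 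Third, in the finite-time blowup case, I would run a Grillakis-type argument: integrate \eqref{conserv} over truncated backward light cones with vertex at the blowup point, use the monotonicity of the flux of $\T$, and deduce that in the limit the solution becomes self-similar in the sense of \eqref{selfsimilar-def}, with $\T$ vanishing outside the light cone — contradicting Claim \ref{selfsim}(ii) after rescaling so that the blowup time becomes $0$.

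Fourth, in the global (or semi-infinite) case, I would distinguish according to the asymptotic behaviour of $N(t)$. If $N(t)$ stays bounded above and below, the solution is genuinely almost periodic on an infinite interval with no concentration, and a Morawetz/virial estimate — again derived by contracting \eqref{conserv} against a suitable vector field and integrating over a spacetime slab, then using the almost-periodic compactness to control error terms — forces the momentum-type quantity to be conserved and the solution to be travelling with some velocity $v$; one shows $|v| \le 1$ on physical grounds, splits into $|v| < 1$ (excluded by Claim \ref{selfsim}(i)) and $|v| = 1$ (where one extracts a shift-invariant profile in $\Energy$ excluded by Claim \ref{nondeg}), and uses Claim \ref{energy-claim}(v) to get the contradiction with non-vanishing energy. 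If instead $N(t) \to \infty$ (or $\to 0$) along a subsequence, one rescales and passes to a limit in $\Energy$ using continuous dependence (Claim \ref{lwp-claim}(iv)) and compactness of $K$ to produce a nontrivial limiting object that again is self-similar or travelling, handled as before.

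The main obstacle I expect is the rigidity step in the global case: controlling the position function $x(t)$ and the frequency function $N(t)$ well enough that the Morawetz-type integral identity actually closes. Because the energy norm in two dimensions does not control $L^\infty$ or continuity, and because the solution is only an abstract limit of smooth solutions for which one can only legitimately manipulate first derivatives via $\T$, every step has to be phrased purely in terms of the stress-energy tensor and the conservation law \eqref{conserv}; the quadratic identity \eqref{destress} is what makes this possible, but verifying that the various flux integrals converge, that the compactness of $K$ quantitatively suppresses the tail contributions, and that $N(t)$ cannot oscillate pathologically (ruling out, e.g., a ``soliton-plus-bubble'' scenario), is where the real work lies. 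The finite-time case is comparatively softer, being essentially a reprise of the Grillakis/Shatah--Struwe self-similar extraction adapted to the energy-class setting.
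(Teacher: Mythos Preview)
Your overall strategy is correct and matches the paper's: contradiction via Claim~\ref{minimal}, then stress-energy analysis of the almost periodic solution to extract a self-similar, travelling, or degenerate limit ruled out by Claims~\ref{selfsim} and~\ref{nondeg}. The organization, however, differs substantively. You split early on finite-time blowup (extracting a self-similar solution directly by a Grillakis flux argument at the singular point) versus the semi-infinite case (subdivided by whether $N$ stays bounded), in the Kenig--Merle style. The paper instead first shows (Proposition~\ref{normas}) that \emph{either} case yields a \emph{normalised ancient solution}: an almost periodic solution on $(-\infty,0]$ with $N(0)=1$, $x(0)=0$, $N(t)\le 1$, obtained by rescaling at ``record times'' where $N$ nearly achieves its running maximum. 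A single Morawetz inequality (Proposition~\ref{moraw}) on this ancient solution then gives asymptotic self-similarity along a sequence $t_n\to-\infty$, and the trichotomy comes from the limits $\alpha:=\lim 1/(|t_n|N(t_n))$ and $v:=\lim x(t_n)/t_n$: the cases $\alpha>0$, $\alpha=0$ with $|v|<1$, and $\alpha=0$ with $|v|=1$ produce respectively the self-similar, subluminal travelling, and lightlike degenerate limits.

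The paper's route buys uniformity---one reduction, one Morawetz estimate, one compactness extraction---and the finite/infinite-time distinction evaporates after the ancient-solution step. Your route could likely be made to work, but two of your branches are thinner than you indicate. In the finite-time case, flux decay alone does not hand you a self-similar \emph{energy-class} limit; you still need a rescaling sequence along which the normalised data converge in $\Energy$ and the limit satisfies \eqref{selfsimilar-def}, which in practice requires exactly the compactness-plus-Morawetz machinery the paper runs on the ancient solution. In your ``$N(t)\to\infty$ or $\to 0$ along a subsequence'' branch, rescaling does not by itself produce a self-similar or travelling object---you land back on an almost periodic solution on a long interval, and still need a Morawetz-type input to force rigidity, which is again the paper's ancient-solution argument in disguise. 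Your ``$N$ bounded above and below'' branch via momentum conservation is a reasonable alternative to the paper's $\alpha=0$ analysis, but identifying $v$ and passing from conserved momentum to the pointwise vanishing \eqref{static-def} still needs an averaging argument of the sort the paper supplies.
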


Thus this paper, combined with the sequel papers \cite{tao:heatwave2}, \cite{tao:heatwave3}, \cite{tao:heatwave4}, \cite{tao:heatwave5}, gives a full proof of Conjecture \ref{conj}.

We now sketch how Theorem \ref{main} is to be proven.  Firstly, in view of Claim \ref{lwp-claim}, it suffices to show that the maximal Cauchy development of every initial data in $\Energy$ is global; applying Claim \ref{minimal}, we conclude that it suffices to show that no almost periodic maximal Cauchy development of strictly positive energy exists.

Suppose for contradiction that a almost periodic maximal Cauchy development $\phi: I \to \Energy$ with non-zero energy exists, with the attendant frequency scale function $N: I \to (0,+\infty)$ and position function $x: I \to \R^2$.  By a compactness argument exploiting the symmetries \eqref{time-trans}, \eqref{space-trans}, \eqref{time-reverse}, \eqref{cov-scaling} (and Claim \ref{lwp-claim}), it turns out that we can normalise so that $(-\infty,0] \subset I$, $N(0)=1$, $x(0)=0$, and $N(t) \leq 1$ for all $t \in (-\infty,0)$; we will refer to such solutions as \emph{normalised ancient solutions}.  

We will then use a Morawetz estimate from \cite{grillakis-energy} to conclude that such a normalised ancient solution exhibits some self-similar behaviour.  More precisely, there exists a sequence of times $t_n \to -\infty$ for which one has
\begin{equation}\label{Tmax}
 \int_{|x| \leq |t_n|} |\partial_t \phi + \frac{x}{t_n} \cdot \nabla_x \phi|_{h(\phi)}^2(t_n,x)\ dx \to 0
\end{equation}
(when interpreted suitably using the stress-energy tensor using \eqref{destress}).  Actually for technical reasons we need to modify \eqref{Tmax} by taking a (logarithmic) Hardy-Littlewood maximal function in time, but let us ignore this issue for now.  One can ensure (using the local theory) that $1/N(t_n) = O( |t_n| )$, and then also ensure (using finite speed of propagation) that $x(t_n) = O( |t_n| )$.   Passing to a subsequence, we may thus ensure that $1/N(t_n) = (\alpha + o(1)) |t_n|$ and $x(t_n) = (v+o(1)) t_n$ for some $0 \leq \alpha < \infty$ and $v \in \R^2$ with $|v| \leq 1$, where $o(1)$ denotes a quantity which goes to zero as $n \to \infty$.  We can now divide into three cases:

\begin{itemize}
\item[(i)] (Self-similar case) $\alpha \neq 0$.
\item[(ii)] (Non-self-similar timelike case) $\alpha=0$ and $|v| < 1$.
\item[(iii)] (Non-self-similar lightlike case) $\alpha=0$ and $|v|=1$.
\end{itemize}

In the self-similar case (i), we will use a rescaling and compactness argument (using the symmetry \eqref{cov-scaling} and Claim \ref{lwp-claim}) to extract a non-trivial self-similar solution (in the sense that \eqref{selfsimilar-def} vanishes).  But this will contradict Claim \ref{selfsim}(ii).

In the non-self-similar timelike case (ii), we will use another rescaling and compactness argument (using the symmetries \eqref{time-trans}, \eqref{space-trans}, \eqref{cov-scaling} and Claim \ref{lwp-claim}) to extract a non-trivial travelling solution (in the sense that \eqref{static-def} vanishes).  But this will contradict Claim \ref{selfsim}(i).

Finally, in the non-self-similar lightlike case we will again use rescaling and compactness (using the symmetries \eqref{time-trans}, \eqref{space-trans}, \eqref{cov-scaling} and Claim \ref{lwp-claim}), combined with some further stress-energy analysis, to obtain a non-trivial travelling solution which is degenerate in the sense that $|w \cdot \nabla \phi|_{h(\phi)}^2 \equiv 0$ whenever $w$ is orthogonal to $\phi$.  But this will contradict Claim \ref{nondeg}.

\begin{remark} Readers familiar with the literature may be surprised to see a lack of sophisticated function spaces (e.g. $X^{s,b}$ spaces and their refinements) and estimates in this paper, as well as a lack of discussion of gauges.  These aspects of the theory will however be present in abundance in \cite{tao:heatwave2}, \cite{tao:heatwave3}, \cite{tao:heatwave4}, \cite{tao:heatwave5} when the above five claims are proven, and will contribute significantly to the length of these papers.  Indeed, these papers will contain all the ``local'' or ``perturbative'' theory needed to prove Conjecture \ref{conj} (such as function space estimates, iteration arguments, and gauge fixing), whereas this paper is concerned primarily with the ``global'' or ``non-perturbative'' theory (and in particular, conservation laws and monotonicity formulae arising from an analysis of the stress-energy tensor).
\end{remark}

\begin{remark}
Amusingly, the arguments here are broadly parallel with those used by Perelman \cite{per1} to solve the Poincar\'e and geometrisation conjectures via Ricci flow; for instance, the normalised ancient solutions are analogous to the $\kappa$-solutions from \cite{per1}, while the stationary and self-similar solutions are analogous to the gradient shrinking solitons from \cite{per1}.  Of course, one major difficulty in \cite{per1} not present here is that for Ricci flow, there are non-trivial gradient shrinking solitons, in contrast to Claim \ref{selfsim}; this leads to singularities, surgeries, and other associated complications.  It may be that similar difficulties would arise if one were to generalise the arguments here to the positive curvature case.
\end{remark}

\subsection{Acknowledgements}

This project was started in 2001, while the author was a Clay Prize Fellow.  The author thanks Andrew Hassell and the Australian National University for their hospitality when a substantial portion of this work was initially conducted, and to Ben Andrews and Andrew Hassell for a crash course in Riemannian geometry and manifold embedding, and in particular to Ben Andrews for explaining the harmonic map heat flow.  The author also thanks Mark Keel for background material on wave maps, Daniel Tataru for sharing some valuable insights on multilinear estimates and function spaces, and to Igor Rodnianski for valuable discussions.  The author is supported by NSF grant DMS-0649473 and a grant from the Macarthur Foundation.

\section{Basic theory}

We begin with some basic consequences\footnote{See \cite{ktv} for the analogous theory in the case of the mass-critical NLS; note that for this ``soft'' aspect of the theory, which relies primarily on the local well-posedness theory and the symmetries, and to a lesser extent on the properties of the stress-energy tensor.} of Claims \ref{energy-claim} and Claims \ref{lwp-claim}, particularly as applied to almost periodic solutions; these claims will be assumed to be true throughout.  First of all, the maximal Cauchy development is indeed maximal (and unique):

\begin{lemma}[Maximality of Cauchy development]\label{Maxcauchy} Let $\phi: I \to \Energy$ be a maximal Cauchy development arising from an initial data $\Phi_0$ and time $t_0$, and let $\phi': I' \to \Energy$ be another maximal Cauchy development arising from another initial data $\Phi'_1$ and time $t'_0$.  Suppose that $\phi[t_1] = \phi'[t_1]$ for some $t_1 \in I \cap I'$.  Then $I=I'$ and $\phi=\phi'$.
\end{lemma}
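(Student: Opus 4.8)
The plan is to run the standard ``clopen set plus maximality'' argument. Let $J := \{ t \in I \cap I' : \phi[t] = \phi'[t] \}$. This set contains $t_1$, so it is nonempty, and it is closed in the interval $I \cap I'$ because $\phi$ and $\phi'$ are continuous (Claim \ref{lwp-claim}(ii)). The crux is to show that $J$ is also open in $I \cap I'$; granting this, connectedness of $I \cap I'$ forces $J = I \cap I'$, so that $\phi = \phi'$ on $I \cap I'$. It then remains to show $I = I'$. If not, then after possibly applying the time reversal symmetry \eqref{time-reverse} (which carries maximal Cauchy developments to maximal Cauchy developments) we may assume $t_* := \sup I < \sup I'$. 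Then $t_*$ is finite (otherwise $\sup I = +\infty \geq \sup I'$), it is a finite endpoint of $I$, and it lies in $I'$ since $\inf I' < t_1 < t_* < \sup I'$. On a left-neighbourhood of $t_*$ we have $\phi = \phi'$, and $\phi'$ is continuous at $t_* \in I'$, so $\phi(t) \to \phi'(t_*)$ in $\Energy$ as $t \to t_*^-$; this contradicts the maximality property Claim \ref{lwp-claim}(v) for $\phi$. Hence $I = I'$ and $\phi = \phi'$.

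The one substantial point is the openness of $J$, i.e. a \emph{local uniqueness} statement: two maximal Cauchy developments passing through the same datum at the same time agree in a neighbourhood of that time. I would extract this from Claim \ref{lwp-claim} in two moves. First, for a \emph{fixed} $(\Psi,s) \in \Energy \times \R$, the maximal Cauchy development of $(\Psi,s)$ is unique as a function on its lifespan: given two candidates $\psi, \psi'$, feed the constant sequence $\Psi_n \equiv \Psi$ into the continuous dependence property Claim \ref{lwp-claim}(iv) with the developments $\psi$ (for every $n$) and $\psi'$ (in the limit); since the $\psi$'s do not depend on $n$, (iv) forces every compact subinterval of the lifespan of $\psi'$ to lie inside that of $\psi$ with $\psi = \psi'$ there, and symmetrically, whence the developments and their lifespans coincide. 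Second, \emph{base-point independence}: if $\phi \colon I \to \Energy$ is the maximal Cauchy development of $(\Phi_0,t_0)$ and $t_1 \in I$, then $\phi$ is also the maximal Cauchy development of $(\phi[t_1],t_1)$. This is proved by checking the five defining properties of Claim \ref{lwp-claim} for the curve $\phi$ relative to the datum $(\phi[t_1],t_1)$: properties (i), (ii), (v) only concern $\phi$ and its domain $I$, and so are immediate; property (iii) (persistence of regularity) is obtained from the rotation-invariance of $\iota$ in Claim \ref{energy-claim}(ii) --- if $\Phi_0$ is classical then (iii) for $\phi$ already gives a classical lift on all of $I$, and if one is instead told that $\phi[t_1]$ is classical, one propagates classicality back to $t_0$ by comparing with the (classical) maximal Cauchy development of $(\phi[t_1],t_1)$, absorbing the $SO(m,1)$-ambiguity of the lift by a suitable $\Rot_U$; and property (iv) at the new base point is obtained by approximating $\phi[t_1]$ by classical data (Claim \ref{energy-claim}(i)), using continuous dependence (iv) at the \emph{old} base point $t_0$ together with the classical uniqueness and well-posedness theory of smooth wave maps to compare perturbed developments with $\phi$ near $t_1$, and then propagating the comparison along $I$. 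Once these two moves are in place, local uniqueness --- and with it openness of $J$, hence the whole lemma --- follows: at any $t_* \in J$ both $\phi$ and $\phi'$ are the unique maximal Cauchy development of $(\phi[t_*],t_*)$, so they coincide near $t_*$ (and in fact one even obtains $I = I'$ and $\phi = \phi'$ directly, bypassing the endpoint argument).

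I expect the base-point independence step --- specifically the verification of properties (iii) and (iv) of Claim \ref{lwp-claim} after changing the base point --- to be the main obstacle, since we are working only with the abstract axioms (i)--(v) and no explicit construction of the solution, so the fact that ``being a maximal Cauchy development'' is intrinsic to the solution curve $\phi$ (and not to the chosen initial time or datum) must be bootstrapped from those axioms, with a minor wrinkle in (iii) in the case where $\Phi_0$ is not known to be classical but $\phi[t_1]$ is. The remaining bookkeeping --- the clopen argument, connectedness of $I \cap I'$, and the maximality-plus-continuity argument at the endpoints --- is routine.
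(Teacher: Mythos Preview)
Your clopen framework and the endpoint argument are fine, but (as you yourself observe at the end) once base-point independence is in hand the lemma follows outright, so the clopen machinery is overhead; the entire content lies in your ``second move''. That is where the gap is. Properties (iii) and (iv) of Claim~\ref{lwp-claim} are not intrinsic properties of a single curve $\phi$ that one can ``verify at a new base point'': (iv) is a statement about the solution \emph{map} at a fixed base time. Unwound, ``$\phi$ satisfies (iv) relative to $(\phi[t_1],t_1)$'' says that for every $\Psi_n \to \phi[t_1]$ the maximal Cauchy developments of $(\Psi_n,t_1)$ converge to $\phi$; but by the genuine (iv) at $t_1$ they converge to the maximal Cauchy development $\tilde\phi$ of $(\phi[t_1],t_1)$, so you are asserting $\phi=\tilde\phi$, which is exactly the statement to be proved. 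Your sketch of (iii) has the same circularity (``comparing with the classical maximal Cauchy development of $(\phi[t_1],t_1)$'' presupposes it agrees with $\phi$), and approximating $\phi[t_1]$ by classical data does not hook into (iv) at the old base point $t_0$, which only accepts sequences converging to $\Phi_0$.

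The paper sidesteps all of this. It first reduces to the special case $t_1=t'_0$ (the general case being two applications of the special one), and then approximates from the $t_0$ side, where (iv) is actually available for $\phi$: pick classical $\Phi_{0,n}\to\Phi_0$, so that the classical developments $\phi_n$ converge to $\phi$ on compacta of $I$ by (iv) at $t_0$. The point is that for \emph{classical} solutions base-point independence is free: by (iii), (v), and classical uniqueness, each $\phi_n$ is simultaneously the maximal Cauchy development of $(\phi_n[t'_0],t'_0)$. Since $\phi_n[t'_0]\to\phi[t'_0]=\Phi'_0$, a second application of (iv), now at $t'_0$, gives $\phi_n\to\phi'$ on compacta of $I'$, whence $\phi=\phi'$ on $I\cap I'$, and the endpoint argument finishes. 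The decisive idea you are missing is to approximate the datum at the base time you already control and let the classical theory carry the approximants across to the other base time.
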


\begin{proof}  It suffices to prove the claim in the case $t_1 = t_0$ or $t_1 = t'_0$, since the general case follows immediately from two applications of this special case (and Claim \ref{lwp-claim}).

Without loss of generality we can take $t_1 = t'_0$.  By Claim \ref{energy-claim}(i), we can find a sequence of classical initial data $\Phi_{0,n} \in \S$ which converge to $\Phi_0$ in $\Energy$ (after applying $\iota$).  By Claim \ref{lwp-claim} (iv), we have maximal Cauchy developments $\phi_n: I_n \times \R^2 \to \H$ with initial data $\phi_n[t_0] = \Phi_{0,n}$ which converge uniformly in $\Energy$ to $\phi$ on any compact subinterval $K$ of $I$ (with $n$ sufficiently large).  
In particular, $\phi_n[t'_0]$ converges in $\Energy$ to $\phi[t'_0] = \phi'[t'_0] = \Phi'_0$.

By Claim \ref{lwp-claim}(iii), $\Phi_{0,n}$ is classical.  By Claim \ref{lwp-claim}(v), $\Phi_{0,n}$ cannot be extended classically beyond $I_n$.  Thus by Claim \ref{lwp-claim}(iii) again (and uniqueness for classical wave maps), $\Phi_{0,n}: I_n \to \Energy$ is also the maximal Cauchy development for the initial data $\phi_n[t'_0]$ at time $t'_0$.  Applying Claim \ref{lwp-claim}(iv), we conclude that the $\Phi_{0,n}$ converge uniformly in $\Energy$ to $\phi'$ on any compact subinterval of $I'$.  This already shows that $\phi = \phi'$ on $I \cap I'$.  Furthermore, if a finite endpoint of $I$ lies in the interior of $I'$, then from Claim \ref{lwp-claim}(ii) we see that $\phi$ tends to a limit at this endpoint, contradicting Claim \ref{lwp-claim}(v); similarly if a finite endpoint of $I'$ lies in the interior of $I$.  Since $I$ and $I'$ both contain $t'_0$, we conclude that $I=I'$, as desired.
\end{proof}

In view of this lemma, we can now refer to maximal Cauchy developments or energy class solutions $\phi: I \to \Energy$ without having to specify any initial time $t_0 \in I$ or initial data $\phi[t_0]$.  Another consequence of the above lemma is that every energy class solution has a unique maximal extension, which is a maximal Cauchy development.  By Claim \ref{lwp-claim}(iii), there is also a well-defined notion of an energy class solution being \emph{classical}, since if it is classical at one time then it is classical at all times (though, as always, the classical solution corresponding to an energy solution is ambiguous up to a rotation \eqref{rotate}).

We also record a variant of this lemma:

\begin{lemma}[Closure of energy class solutions under uniform limits]\label{uniform}  Let $\phi_n: I_n \to \Energy$ be a sequence of energy class solutions, and let $K$ be a compact interval which is contained in $I_n$ for all sufficiently large $n$.  Suppose that $\phi_n$ converges uniformly in $\Energy$ on $K$ to some limit $\phi: K \to \Energy$.  Then $\phi$ is also an energy class solution.
\end{lemma}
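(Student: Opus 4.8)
The plan is to realise $\phi$ as the restriction of a maximal Cauchy development of one of its own time slices, and then invoke Lemma~\ref{Maxcauchy} together with the continuous dependence property Claim~\ref{lwp-claim}(iv). We may assume $K$ is non-empty; I would fix a time $t_0 \in K$ and set $\Phi_0 := \phi[t_0] \in \Energy$. Since each $\phi_n$ is (a restriction of) a maximal Cauchy development, it is continuous by Claim~\ref{lwp-claim}(ii), and hence its uniform limit $\phi \colon K \to \Energy$ is continuous as well; in particular $\phi_n[t_0] \to \Phi_0$ in $\Energy$. Let $\psi \colon J \to \Energy$ be the maximal Cauchy development with $\psi[t_0] = \Phi_0$ provided by Claim~\ref{lwp-claim}, on its maximal lifespan $J \ni t_0$. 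The goal will then be to show $K \subseteq J$ and $\phi = \psi|_K$, since by definition this exhibits $\phi$ as a partial Cauchy development, hence an energy class solution.

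First I would pass from each $\phi_n$ to its maximal extension: by Lemma~\ref{Maxcauchy} and the discussion following it, $\phi_n$ extends uniquely to a maximal Cauchy development $\tilde\phi_n \colon \tilde I_n \to \Energy$ agreeing with $\phi_n$ on $I_n$, so that $\tilde\phi_n$ agrees with $\phi_n$ on $K$ for $n$ large, and in particular $\tilde\phi_n[t_0] = \phi_n[t_0] \to \Phi_0$. Applying Claim~\ref{lwp-claim}(iv) to the sequence $\tilde\phi_n$ (viewed as arising from data $\tilde\phi_n[t_0]$ at time $t_0$) with limit $\psi$, I obtain that every compact subinterval $K'$ of $J$ is contained in $\tilde I_n$ for $n$ large and that $\tilde\phi_n \to \psi$ uniformly on $K'$. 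For any such $K'$ that additionally lies in $K$, combining this with the hypothesis $\phi_n \to \phi$ uniformly on $K$ yields $\phi = \psi$ on $K'$; letting $K'$ exhaust the overlap $J \cap K$ (possible since $J$ is open by Claim~\ref{lwp-claim}(i)) gives $\phi = \psi$ on all of $J \cap K$.

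It remains to rule out $K \not\subseteq J$, and this is the one step I expect to require care rather than routine bookkeeping. If $K$ is not contained in the open interval $J$, then—using $t_0 \in K \cap J$—there is a finite endpoint $t_*$ of $J$ that belongs to $K$, such that the half-open interval running from $t_0$ to $t_*$ (excluding $t_*$) is contained in $J \cap K$. Along that half-open interval we have $\phi = \psi$ by the previous paragraph, while continuity of $\phi$ on $K$ forces $\phi(t) \to \phi(t_*) \in \Energy$ as $t \to t_*$ from within $J$. Consequently $\psi(t)$ converges in $\Energy$ as $t$ approaches the finite endpoint $t_*$ of its maximal lifespan, contradicting the maximality clause Claim~\ref{lwp-claim}(v). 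Hence $K \subseteq J$, and taking $K' = K$ in the comparison above gives $\phi = \psi|_K$, as desired.

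I anticipate that the bulk of the write-up is pure organisation: correctly locating the offending endpoint $t_*$ of $J$ inside $K$, and aligning two uniform-convergence statements that live on a priori different intervals ($K$ for $\phi_n \to \phi$, and arbitrary compact subintervals of $J$ for $\tilde\phi_n \to \psi$) so they can be matched on $J \cap K$. The genuinely structural ingredients are all already available: openness of the lifespan gives the exhaustion of $J \cap K$ by compact subintervals, continuity of the uniform limit together with maximality Claim~\ref{lwp-claim}(v) forces $J \cap K$ to be all of $K$, and Lemma~\ref{Maxcauchy} makes the maximal extension $\tilde\phi_n$ canonical so that Claim~\ref{lwp-claim}(iv) can be applied without ambiguity. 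No analytic input beyond Claims~\ref{energy-claim} and \ref{lwp-claim} and Lemma~\ref{Maxcauchy} should be needed.
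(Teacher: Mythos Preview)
Your proposal is correct and follows essentially the same route as the paper: take the maximal Cauchy development $\psi$ of $\phi[t_0]$, use continuous dependence (Claim~\ref{lwp-claim}(iv)) applied to the (maximal extensions of the) $\phi_n$ to identify $\phi$ with $\psi$ on $J\cap K$, and then invoke continuity of $\phi$ together with Claim~\ref{lwp-claim}(v) to rule out a finite endpoint of $J$ lying in $K$. The paper's write-up is terser---it does not spell out the extension $\tilde\phi_n$ or the exhaustion of $J\cap K$ by compacta---but the argument is the same.
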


\begin{proof} We can assume that $K$ has non-empty interior, since the claim follows immediately from Claim \ref{lwp-claim} otherwise.  Let $t_0$ be any interior point of $K$ and let $\tilde \phi: I \to \Energy$ be the maximal Cauchy development of $\phi[t_0]$ from time $t_0$.  From Claim \ref{lwp-claim}(iv) we see that $\tilde \phi=\phi$ on $I \cap K$.  If any finite endpoint of $I$ is contained in $K$, we see from the uniform convergence of $\phi_n$ to $K$ and Claim \ref{lwp-claim}(ii) that $\tilde \phi$ has a limit at this endpoint, contradicting Claim \ref{lwp-claim}(v); thus $K \subset I$, and the claim follows.	
\end{proof}

The symmetries \eqref{time-trans}, \eqref{space-trans}, \eqref{time-reverse}, \eqref{cov-scaling} clearly transform classical maximal Cauchy developments to other classical maximal Cauchy developments $\Time_{t_0} \phi: I+t_0 \to \S$, $\Trans_{x_0} \phi: I \to \S$, $\Rev \phi: -I \to \S$, $\Dil_\lambda: \lambda I \to \S$.  By limiting arguments using Claim \ref{energy-claim} and Lemma \ref{uniform}, we can extend these symmetries continuously to maximal Cauchy developments $\phi: I \to \Energy$ in the energy space.  It is clear that any group identities\footnote{One could of course unify all these symmetries together into a single Lie group if desired to make this claim more precise.} obeyed by the symmetries classically (e.g. the commutativity of time and space translation) will continue to hold in the energy space setting.

From Definition \ref{ap-def} we see that the above symmetries also transform (maximal) almost periodic solutions to (maximal) almost periodic solutions, with the frequency scale function and position function transforming by the rules
\begin{align}
N_{\Time_{t_0} \phi}(t) &:= N_\phi(t-t_0) \label{time-n}\\
x_{\Time_{t_0} \phi}(t) &:= x_\phi(t-t_0) \label{time-x}\\
N_{\Trans_{x_0} \phi}(t) &:= N_\phi(t) \label{trans-n}\\
x_{\Trans_{x_0} \phi}(t) &:= x_\phi(t) + x_0 \label{trans-x}\\
N_{\Rev \phi}(t) &:= N_\phi(-t) \label{rev-n}\\
x_{\Rev \phi}(t) &:= x_\phi(-t) \label{rev-x}\\
N_{\Dil_\lambda \phi}(t) &:= \frac{1}{\lambda} N_\phi(\frac{t}{\lambda}) \label{scale-n}\\
x_{\Dil_\lambda \phi}(t) &:= \lambda x_\phi(\frac{t}{\lambda}). \label{scale-x}
\end{align}
Furthermore, the compactness modulus $K$ of the transformed solutions is the same as that of the original solution (except in the case of time reversal, in which case it is a reflection of the original).

A basic construction we shall use repeatedly is the \emph{rescaling} $\phi_n: I_n \to \Energy$ of an almost periodic solution $\phi: I \to \Energy$ at some time $t_n \in I$, defined as
\begin{equation}\label{phin} \phi_n := \operatorname{Dil}_{N(t_n)} \operatorname{Time}_{-t_n} \operatorname{Trans}_{-x(t_n)} \phi,
\end{equation}
where $N: I \to (0,+\infty)$ and $x: I \to \R^2$ are the frequency scale function and the position function of $\phi$.  Note that $I_n := \{ t: t_n + t/N(t_n) \in I \}$, and that the frequency scale function $N_n: I_n \to (0,+\infty)$ and position function $x_n: I_n \to \R^2$ of $\phi_n$ are given by the formulae
\begin{equation}\label{phin-scale}
N_n(t) := \frac{ N(t_n + t/N(t_n) )}{N(t_n)}; \quad x_n(t) = N(t_n) (x(t_n+t/N(t_n)) - x(t_n)).
\end{equation}
In particular $0 \in I_n$ and
\begin{equation}\label{sort}
N_n(0) = 1; \quad x_n(0) = 0.
\end{equation}

As a first application of this rescaling, we show that the frequency scale function $N(t)$ blows up at any finite endpoint.

\begin{lemma}[Blowup of $N(t)$]\label{blow}  Let $\phi: I \to \Energy$ be an almost periodic maximal Cauchy development with frequency scale function $N: I \to (0,+\infty)$, and let $t_*$ be any finite endpoint of $I$.  Then $\liminf_{t \to t_*; t \in I} |t-t_*| N(t) > 0$.
\end{lemma}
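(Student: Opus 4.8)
The plan is to argue by contradiction via a rescaling and compactness argument, using the maximality of the Cauchy development (Claim \ref{lwp-claim}(v)) as the source of the contradiction. Suppose $\liminf_{t \to t_*; t \in I} |t-t_*| N(t) = 0$; without loss of generality (using time reversal \eqref{time-reverse} and time translation \eqref{time-trans}) we may take $t_*$ to be the right endpoint of $I$ and $t_* = 0$, so that $I$ contains an interval $(a,0)$ for some $a < 0$ and there is a sequence $t_n \to 0^-$ with $|t_n| N(t_n) \to 0$. The idea is that the solution concentrates at frequency $N(t_n) \gg 1/|t_n|$ but the ``time remaining'' to the endpoint, namely $|t_n|$, is much larger than the natural time scale $1/N(t_n)$ of the solution; so after rescaling, the rescaled solution lives on a time interval that grows to all of $\R$ (or at least a neighbourhood of a compact set of times) while staying in a fixed compact subset of $\Energy$, and one should be able to extract a limiting solution that does not blow up — contradicting that $0$ was a finite endpoint.

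To make this precise, form the rescaling $\phi_n := \Dil_{N(t_n)} \Time_{-t_n} \Trans_{-x(t_n)} \phi$ as in \eqref{phin}, defined on $I_n = \{ t : t_n + t/N(t_n) \in I \}$. By \eqref{sort} we have $N_n(0) = 1$, $x_n(0) = 0$, and by almost periodicity $\phi_n[0] = \Dil_{N(t_n)} \Trans_{-x(t_n)} \phi[t_n] \in K$ for the fixed compactness modulus $K$. Since $K$ is compact, after passing to a subsequence we may assume $\phi_n[0]$ converges in $\Energy$ to some limit $\Phi_\infty \in \Energy$, which has the same (nonzero) energy as $\phi$. The right endpoint of $I_n$ is $(0 - t_n) N(t_n) = |t_n| N(t_n) \to 0$; wait — this is the wrong direction, so one should instead observe that the endpoint being approached gets pushed \emph{towards} $0$, meaning $\phi_n$ is a maximal Cauchy development whose lifespan shrinks to a point on the right. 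The correct exploitation is the reverse: I would instead use that $\phi_n$ is defined on an interval whose right endpoint $\to 0$, extract the limit $\Phi_\infty$ of $\phi_n[0]$, let $\psi: J \to \Energy$ be the maximal Cauchy development of $\Phi_\infty$ at time $0$ (with $J$ open containing $0$ by Claim \ref{lwp-claim}(i)), and then apply continuous dependence (Claim \ref{lwp-claim}(iv)): for any compact $[0,\delta] \subset J$ with $\delta > 0$, we have $[0,\delta] \subset I_n$ for $n$ large. But the right endpoint of $I_n$ is $|t_n| N(t_n) \to 0 < \delta$, contradiction.

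Thus the key steps, in order, are: (1) reduce by symmetries to $t_* = 0$ a right endpoint and obtain the sequence $t_n \to 0^-$ with $|t_n| N(t_n) \to 0$; (2) form the rescaled solutions $\phi_n$ via \eqref{phin} and record via \eqref{sort} that their lifespans $I_n$ have right endpoint exactly $|t_n| N(t_n) \to 0$; (3) use compactness of $K$ to extract a subsequential limit $\Phi_\infty$ of $\phi_n[0]$ in $\Energy$; (4) invoke Claim \ref{lwp-claim}(i) to get a maximal Cauchy development $\psi$ of $\Phi_\infty$ with open lifespan containing $[0,\delta]$ for some $\delta > 0$, and then Claim \ref{lwp-claim}(iv) to force $[0,\delta] \subset I_n$ for $n$ large, contradicting the fact that the right endpoint of $I_n$ tends to $0$. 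The main obstacle is purely bookkeeping: ensuring that the rescaling maps are genuinely applied to \emph{maximal} Cauchy developments (so that $I_n$ really is the maximal lifespan of $\phi_n$, with the stated endpoint), which follows because the symmetries \eqref{time-trans}, \eqref{space-trans}, \eqref{cov-scaling} transform maximal Cauchy developments to maximal Cauchy developments — as already noted in the discussion preceding Lemma \ref{blow}. One should also double-check the edge case where $I$ has no left endpoint or where $t_* $ is the left rather than right endpoint, both handled by time translation and time reversal as in step (1). No delicate estimates are needed; this is a ``soft'' compactness argument in the spirit of the basic theory developed above.
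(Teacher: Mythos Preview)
Your proposal is correct and follows essentially the same approach as the paper's proof: rescale via \eqref{phin} so that $\phi_n[0]$ lies in the compactness modulus $K$, note that the upper endpoint of the rescaled lifespan $I_n$ is $(t_*-t_n)N(t_n)\to 0$, extract a subsequential limit $\Phi_\infty$ of $\phi_n[0]$, and derive a contradiction from Claim~\ref{lwp-claim}(i) and (iv). The paper does not bother with the time-translation normalisation $t_*=0$ (it simply works with $t_*=\sup(I)$ directly), and your parenthetical about the energy of $\Phi_\infty$ is not needed for the contradiction, but these are cosmetic differences.
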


\begin{proof} By the time reversal symmetry \eqref{time-reverse} we may assume that $t_* = \sup(I)$ is the upper endpoint of $I$.  Suppose for contradiction that the claim failed, then we can find a sequence $t_n \in I$ converging to $t_*$ such that $N(t_n) = o( 1 / (t_*-t_n) )$.  Then if we consider the rescaled solutions \eqref{phin}, we see that these are almost periodic maximal Cauchy development on an interval $I_n$ containing $0$ (but whose upper endpoint is converging to zero), with $\phi_n[0]$ lying in a compact subset of $\Energy$.  Thus, by passing to a subsequence, we may assume that $\phi_n[0]$ converges in $\Energy$ to a limit $\Phi_\infty$, which has a maximal Cauchy development $\phi_\infty$ on an open neighbourhood of $0$.  But this contradicts Claim \ref{lwp-claim}(iv) and the fact that the upper endpoint of $I_n$ converges to zero.
\end{proof}

Now we begin utilising the stress-energy tensor $\T$.

\begin{lemma}[Localisation of energy density]\label{localise}  Le $\phi: I \to \Energy$ be an almost periodic solution with frequency scale function $N: I \to (0,+\infty)$ and position function $x: I \to \R^2$.  Then for every $\eps > 0$ there exists $C > 0$ such that
$$ \int_{|x-x(t)| \geq C/N(t)} \T_{00}\ dx \leq \eps$$
for every $t \in I$, or equivalently that
$$ \int_{|x-x(t)| < C/N(t)} \T_{00}\ dx \geq \E(\phi) - \eps$$
for every $t \in I$.
\end{lemma}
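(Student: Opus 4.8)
The plan is to deduce everything from the compactness of the orbit, the continuity of the stress-energy map from Claim~\ref{energy-claim}(iv), and the elementary fact that a compact subset of $L^1(\R^2)$ is uniformly tight at spatial infinity. First I would set $\Psi_t := \Dil_{N(t)} \Trans_{-x(t)} \phi[t]$, so that by almost periodicity $\Psi_t \in K$ for all $t \in I$, where $K \subset \Energy$ is the compactness modulus. Composing the continuous map $\T: \Energy \to L^1(\R^2 \to \Sym^2(\R^{1+2}))$ with the (bounded linear) projection onto the $00$-component yields a continuous map $\Psi \mapsto \T_{00}[\Psi]$ from $\Energy$ into the non-negative cone of $L^1(\R^2)$; hence $\{ \T_{00}[\Psi] : \Psi \in K \}$ is a compact subset of $L^1(\R^2)$. (I would note at the outset that $\T_{00} \geq 0$, so no absolute values are needed in the integrals below.)

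The next step is the uniform tightness statement: for every $\eps > 0$ there exists $C > 0$ such that $\int_{|y| \geq C} \T_{00}[\Psi]\ dy \leq \eps$ for all $\Psi \in K$. This is standard: for each fixed $f \in L^1(\R^2)$ one has $\int_{|y| \geq R} |f| \to 0$ as $R \to \infty$ by dominated convergence; covering the compact set $\{ \T_{00}[\Psi] : \Psi \in K \}$ by finitely many $L^1$-balls of radius $\eps/2$ centred at $f_1,\dots,f_M$, choosing $C$ so that $\int_{|y| \geq C} |f_j| \leq \eps/2$ for all $j$, and applying the triangle inequality gives the claim.

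It then remains to undo the rescaling. For classical data $(\phi_0,\phi_1)$, a direct computation from \eqref{space-trans-data}, \eqref{scaling-data}, and $\T_{00} = \tfrac12 |\nabla \phi_0|_{h(\phi_0)}^2 + \tfrac12 |\phi_1|_{h(\phi_0)}^2$ shows that $\T_{00}[\Dil_N \Trans_{-x_0}(\phi_0,\phi_1)](y) = N^{-2}\, \T_{00}[(\phi_0,\phi_1)](y/N + x_0)$, and the substitution $z = y/N + x_0$ (which has Jacobian $N^2$, reflecting the scale-invariance of the energy in two dimensions) gives
$$\int_{|y| \geq C} \T_{00}[\Dil_N \Trans_{-x_0}(\phi_0,\phi_1)]\ dy = \int_{|z - x_0| \geq C/N} \T_{00}[(\phi_0,\phi_1)]\ dz.$$
By Claim~\ref{energy-claim}(i) together with the continuity of $\T$ and of the symmetry actions (Claim~\ref{energy-claim}(iii), (iv)) this identity extends to all of $\Energy$ (all quantities being rotation-invariant, hence well-defined on the quotient). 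Applying it with $(\phi_0,\phi_1) = \phi[t]$, $N = N(t)$, $x_0 = x(t)$ and invoking the uniform tightness bound for $\Psi_t \in K$ gives $\int_{|x-x(t)| \geq C/N(t)} \T_{00}(t,x)\ dx \leq \eps$ for all $t \in I$, which is the first assertion; the second follows by subtracting this from $\int_{\R^2} \T_{00}(t,x)\ dx = \E(\phi[t]) = \E(\phi)$, using conservation of energy for energy class solutions.

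I do not expect a genuine obstacle here: the only point demanding care is the passage in the last step from classical data, where the transformation law for $\T_{00}$ is a one-line computation, to arbitrary energy class data, which must be routed through the density in Claim~\ref{energy-claim}(i) and the continuity properties of $\T$ and the symmetries, while keeping in mind that the stress-energy map is only defined after quotienting by the rotation group (under which the energy density is invariant, so this causes no trouble).
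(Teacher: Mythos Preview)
Your proof is correct and follows essentially the same approach as the paper: use almost periodicity and the continuity of $\T$ to place the stress-energy tensors of the normalised data in a compact (hence tight) subset of $L^1$, then undo the scaling. You have simply spelled out in detail the two steps the paper leaves implicit (the finite-cover argument for tightness, and the transformation law for $\T_{00}$ under $\Dil$ and $\Trans$ together with its extension by density).
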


\begin{proof}  By Definition \ref{ap-def} and Claim \ref{energy-claim}(iv), we see that the stress-energy tensor of $\operatorname{Dil}_{N(t)} \operatorname{Trans}_{-x(t)} \phi[t]$ for $t \in I$ lies inside a compact subset of $L^1(\R^2)$, and is thus tight in the sense that for every $\eps > 0$ there exists $C > 0$ such that
$$ \int_{|x| \geq C} \T_{00}( \operatorname{Dil}_{N(t)} \operatorname{Trans}_{-x(t)} \phi[t] )\ dx \leq \eps$$
for all $t \in I$.  Undoing the scaling, we obtain the claim.
\end{proof}

We can now show that $N$ and $x$ are stable on short time intervals:

\begin{lemma}[Quasicontinuity of $N(t)$ and $x(t)$]\label{quasi}  Let $\phi: I \to \Energy$ be an almost periodic solution with frequency scale function $N: I \to (0,+\infty)$ and scale function $x: I \to \R^2$ with non-zero energy.  Then there exists constants $c, C > 0$ such that $\frac{1}{C} N(t) \leq N(t') \leq C N(t)$ and $|x(t')-x(t)| \leq C/N(t)$ whenever $t,t' \in I$ are such that $|t-t'| \leq c/N(t)$.
\end{lemma}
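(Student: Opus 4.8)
The plan is a contradiction argument built on the rescaling \eqref{phin} together with the local theory (Claims \ref{energy-claim}, \ref{lwp-claim}) and Lemma \ref{localise}. Suppose the conclusion fails. Then for every $c,C>0$ there is a bad pair of times, so after passing to a subsequence we obtain sequences $t_n,t'_n\in I$ with $|t_n-t'_n|\,N(t_n)\to 0$ but with $(t_n,t'_n)$ violating the conclusion ever more badly. Writing $r_n:=N(t'_n)/N(t_n)$ and $\rho_n:=|x(t'_n)-x(t_n)|\,N(t_n)$, a further subsequence puts us in one of three cases: (a) $r_n\to\infty$; (b) $r_n\to 0$; (c) $r_n\to L\in(0,\infty)$ and $\rho_n\to\infty$.

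In each case I rescale at one of the two times: set $\psi_n:=\Dil_{N(t_n)}\Time_{-t_n}\Trans_{-x(t_n)}\phi$, an almost periodic maximal Cauchy development with the \emph{same} compactness modulus $K$, with $\psi_n[0]=\Dil_{N(t_n)}\Trans_{-x(t_n)}\phi[t_n]\in K$, $N_{\psi_n}(0)=1$, $x_{\psi_n}(0)=0$, and (by \eqref{phin-scale}) with $\psi_n[s_n]$ having frequency scale $r_n$ and position of norm $\rho_n$, where $s_n:=(t'_n-t_n)\,N(t_n)\to 0$. Passing to a subsequence, $\psi_n[0]\to\Phi_\infty$ in $\Energy$ for some $\Phi_\infty$ with $\E(\Phi_\infty)=\E(\phi)>0$ (energy is symmetry-invariant and continuous by Claim \ref{energy-claim}(iv)). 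Since $s_n\to 0$, Claim \ref{lwp-claim}(i),(ii),(iv) give $\psi_n[s_n]\to\Phi_\infty$ as well: the maximal Cauchy development of $\Phi_\infty$ exists on an open neighbourhood of $0$, the $\psi_n$ converge to it uniformly near $0$, and it is continuous in time. Applying the continuous stress-energy map of Claim \ref{energy-claim}(iv), both $\T_{00}(\psi_n[0])$ and $\T_{00}(\psi_n[s_n])$ converge in $L^1(\R^2)$ to $\mu:=\T_{00}(\Phi_\infty)$, a nonnegative $L^1$ function of total mass $\E(\phi)>0$; in particular $\mu$ assigns zero mass to each point (being absolutely continuous) and has vanishing mass on balls receding to infinity (being a finite measure).

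Now I feed in Lemma \ref{localise}, applied to $\psi_n$: for each $\eps>0$ there is $C_\eps>0$, uniform in $n$, with $\int_{|x|<C_\eps}\T_{00}(\psi_n[0])\,dx\ge\E(\phi)-\eps$ and $\int_{|x-y_n|<C_\eps/r_n}\T_{00}(\psi_n[s_n])\,dx\ge\E(\phi)-\eps$, where $y_n:=x_{\psi_n}(s_n)$, $|y_n|=\rho_n$. Letting $n\to\infty$ in the first gives $\mu(\{|x|<C_\eps\})\ge\E(\phi)-\eps$, and a mass count (total mass $\E(\phi)$, for $\eps$ small) forces the balls $\{|x|<C_\eps\}$ and $\{|x-y_n|<C_\eps/r_n\}$ to overlap for large $n$, whence $|y_n|\le C_\eps+C_\eps/r_n$. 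In case (a) this bounds $|y_n|$ while $C_\eps/r_n\to 0$, so along a subsequence $y_n\to y_\infty$; letting $n\to\infty$ and then shrinking the radius yields $\mu(\{y_\infty\})\ge\E(\phi)-\eps>0$, contradicting absolute continuity of $\mu$. Case (b) reduces to case (a) by instead rescaling at $t'_n$: the two times swap roles, the frequency ratio becomes $1/r_n\to\infty$, and the time shift $r_n\cdot(|t_n-t'_n|N(t_n))$ still tends to $0$. In case (c), $r_n\to L>0$ keeps $C_\eps/r_n$ bounded while $|y_n|=\rho_n\to\infty$, so $\{|x-y_n|<C_\eps/r_n\}$ escapes to spatial infinity; since $\T_{00}(\psi_n[s_n])\to\mu$ in $L^1$ and $\mu$ is a finite measure, $\int_{|x-y_n|<C_\eps/r_n}\T_{00}(\psi_n[s_n])\,dx\to 0$, contradicting the lower bound $\E(\phi)-\eps>0$.

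The work is bookkeeping rather than a new idea, but two points deserve care. First, the drastic-frequency-change cases (a),(b) are \emph{not} visible in the frame rescaled at the higher-frequency time, where the energy density merely spreads out harmlessly; the contradiction appears only in the frame rescaled at the lower-frequency time, where the energy is forced to pile up at a single point — this is why case (b) needs the role swap. Second, the genuinely substantive input is that the limiting density $\mu$ really lies in $L^1$, so that it can neither concentrate at a point nor leak to spatial infinity; this is exactly where the strong continuity in Claims \ref{energy-claim} and \ref{lwp-claim} is used — to guarantee that the limit of the rescaled solutions is an honest energy-class datum with honest $L^1$ stress-energy — in tandem with the uniform spatial localisation of Lemma \ref{localise}. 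The hypothesis $\E(\phi)\neq 0$ enters precisely in making $\E(\phi)-\eps$ positive for $\eps$ small.
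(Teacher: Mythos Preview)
Your argument is correct and follows the same overall strategy as the paper: rescale at $t_n$, use the local theory (Claims \ref{energy-claim}(iv), \ref{lwp-claim}(ii),(iv)) to force $\T(\psi_n[s_n])\to\mu$ in $L^1$, and use the almost-periodic localisation to show this is impossible. The paper compresses your three cases into the single sentence ``by Lemma \ref{localise}, the stress-energy tensors $\T(\phi_n[s_n])$ have no convergent subsequence in $L^1$'', whereas you unpack the mechanism explicitly.

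One small comment on your closing remark: it is not quite true that case (b) is \emph{invisible} in the frame rescaled at $t_n$. In that frame $N_{\psi_n}(s_n)=r_n\to 0$, and since $\Dil_{r_n}\Trans_{-y_n}\psi_n[s_n]\in K$ has stress-energy in a fixed $L^1$-compact set (hence uniformly integrable), the mass of $\T_{00}(\psi_n[s_n])$ in any \emph{fixed} ball tends to $0$, contradicting $L^1$-convergence to the nonzero $\mu$. This is what the paper is implicitly invoking. Your role-swap trick is a perfectly good alternative and avoids appealing to uniform integrability, but the contradiction is available in either frame.
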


\begin{proof} If this lemma failed, then we could find sequences of times $t_n, t'_n$ with $t'_n - t_n = o( 1/N(t_n) )$ such that $N(t'_n)/N(t_n)$ diverged either to infinity or to zero, or such that $N(t_n) |x(t')-x(t)|$ diverged to infinity.  If we let $\phi_n$ be the rescaled solutions \eqref{phin}, we thus see (by \eqref{phin-scale}) that there is a sequence of times $s_n \to 0$ such that $N_n(s_n)$ diverges either to infinity or to zero, or such that $x_n(s_n)$ diverges to infinity.  In particular, this shows (by Lemma \ref{localise}) that the stress energy tensors $\T( \phi_n[s_n] ) \in L^1(\R^2)$ have no convergent subsequence in $L^1(\R^2)$.

On the other hand, by passing to a subsequence as in the proof of Lemma \ref{blow} we may assume that $\phi_n$ converges uniformly on some neighbourhood $I_\infty$ of $0$ to some limit $\phi_\infty: I_\infty \to \Energy$.  By Claim \ref{lwp-claim}(ii), we conclude that $\phi_n[s_n]$ converges in $\Energy$, and hence by Claim \ref{energy-claim}(iv) $\T(\phi_n[s_n])$ converges in $L^1(\R^2)$, a contradiction.
\end{proof}

From Lemma \ref{quasi} and compactness we immediately obtain

\begin{corollary}[Local boundedness of $N(t)$ and $x(t)$]\label{localbound} Let $\phi: I \to \Energy$ be an almost periodic solution with frequency scale function $N: I \to (0,+\infty)$ and position function $x: I \to \R^2$.  Then for any compact subinterval $J$ of $I$ we have $0 < \inf_{t \in J} N(t) \leq \sup_{t \in J} N(t) < +\infty$ and $\sup_{t \in J} |x(t)| < +\infty$.
\end{corollary}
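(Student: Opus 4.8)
The plan is to deduce everything from Lemma \ref{quasi} by a straightforward compactness-and-contradiction argument. Note first that, as in Lemma \ref{quasi}, one should read this statement with the standing hypothesis that $\phi$ has non-zero energy; indeed, if $\E(\phi)=0$ then Claim \ref{energy-claim}(v) forces $\phi$ to be constant, and then $N$ and $x$ are entirely unconstrained, so the conclusion can only be meant in the non-trivial case $\E(\phi)>0$. So I would fix a compact subinterval $J \subseteq I$, let $c, C > 0$ be the constants supplied by Lemma \ref{quasi}, and suppose for contradiction that one of the three conclusions ($\inf_{t\in J}N(t)>0$, $\sup_{t\in J}N(t)<\infty$, $\sup_{t\in J}|x(t)|<\infty$) fails.

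In each case the mechanism is the same: extract a sequence $t_n \in J$ witnessing the failure, use compactness of $J$ to pass to a subsequence with $t_n \to t_* \in J$, and then observe that since $N(t_*)$ is a fixed, finite, positive number, the smallness condition $|t_n - t_*| \leq c/N(t_*)$ holds for all sufficiently large $n$, so Lemma \ref{quasi} may be applied with reference time $t_*$ and comparison time $t_n$. Concretely: if $\sup_{t \in J} N(t) = +\infty$, a sequence with $N(t_n) \to \infty$ yields $N(t_n) \leq C N(t_*) < \infty$, a contradiction; if $\inf_{t \in J} N(t) = 0$, a sequence with $N(t_n) \to 0$ yields $N(t_n) \geq \tfrac1C N(t_*) > 0$, again a contradiction; and if $\sup_{t \in J} |x(t)| = +\infty$, a sequence with $|x(t_n)| \to \infty$ yields $|x(t_n)| \leq |x(t_*)| + C/N(t_*) < \infty$, a contradiction. (It is natural, though not strictly necessary, to dispose of the two $N$-statements first and only then the $x$-statement.)

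I do not expect a genuine obstacle here — the result is, as the text says, an immediate consequence of Lemma \ref{quasi}. The only point requiring a little care is to apply Lemma \ref{quasi} with the limit point $t_*$ playing the role of the reference time $t$ in its hypothesis $|t - t'| \leq c/N(t)$ (rather than the running time $t_n$), so that the smallness condition becomes automatic along any sequence converging to $t_*$; swapping the two roles would fail, since a priori $c/N(t_n)$ could shrink to zero faster than $|t_n - t_*|$.
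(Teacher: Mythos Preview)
Your argument is correct and is exactly what the paper has in mind: it simply states that the corollary follows ``from Lemma \ref{quasi} and compactness'' without spelling out the details, and your contradiction-via-convergent-subsequence argument is the natural way to fill that in. Your remark about applying Lemma \ref{quasi} with the limit point $t_*$ as the reference time (so that the smallness hypothesis $|t_n-t_*|\le c/N(t_*)$ is eventually automatic) is precisely the right observation, and your caveat about the zero-energy case is also apt.
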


Now we establish a compactness property of almost periodic solutions.

\begin{lemma}[Compactness]\label{compaq}  Let $\phi_n: I_n \to \Energy$ be a sequence of almost periodic solutions with frequency scale function $N_n: I_n \to (0,+\infty)$ and scale function $x_n: I_n \to \R^2$, and all with the same compactness modulus $K$.  Let $I$ be an interval such that for every compact subinterval $J$ of $I$, that $I_n$ contains $J$ for all sufficiently large $n$, and that 
\begin{equation}\label{ntj}
 0 < \inf_{n; t \in J} N_n(t) \leq \sup_{n; t \in J} N_n(t) < \infty
\end{equation}
and
\begin{equation}\label{xnt}
 \sup_{n; t \in J} |x_n(t)| < \infty
\end{equation}
where the inf and sup are over $n$ that are sufficiently large depending on $J$.  Then after passing to a subsequence, $\phi_n$ converges uniformly in $\Energy$ on compact subintervals of $I$ to an almost periodic solution $\phi: I \to \Energy$ with frequency scale function $N: I \to (0,+\infty)$ and position function $x: I \to \R^2$ obeying the bounds
\begin{equation}\label{nn}
 \liminf_{n \to \infty} N_n(t) \leq N(t) \leq \limsup_{n \to \infty} N_n(t)
 \end{equation}
and
\begin{equation}\label{xx}
 \liminf_{n \to \infty} |x_n(t) - x(t)| = 0
 \end{equation}
for all $t \in I$, and with compactness modulus $K$.
\end{lemma}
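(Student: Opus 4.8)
The plan is to build the limit solution $\phi$ by freezing the data at a single time and propagating it with the continuous‑dependence part of Claim \ref{lwp-claim}, to show that the resulting maximal Cauchy development covers all of $I$ by using the hypotheses \eqref{ntj}, \eqref{xnt} to trap the $\phi_n$ in fixed compact subsets of $\Energy$, and finally to recover $N$ and $x$ pointwise by a soft compactness argument. The degenerate case in which $I$ has empty interior is immediate from the pointwise argument in the last paragraph, so assume $I$ nondegenerate.

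The first step is a tightness observation. For a compact subinterval $J\subset I$, by \eqref{ntj}, \eqref{xnt} the pairs $(N_n(t),x_n(t))$ range, for $t\in J$ and all large $n$, over a fixed compact subset of $(0,\infty)\times\R^2$; since $\Dil_{N_n(t)}\Trans_{-x_n(t)}\phi_n[t]\in K$ and the $\Dil$, $\Trans$ actions on $\Energy$ are continuous and isometric (Claim \ref{energy-claim}(iii)), the point $\phi_n[t]$ lies in the image of that compact parameter set times $K$ under the continuous map $(\lambda,y,\psi)\mapsto \Trans_y\Dil_{1/\lambda}\psi$. Taking closures we obtain a compact $K_J\subset\Energy$ with $\phi_n[t]\in K_J$ for all $t\in J$ and all large $n$. (Here each $\phi_n$ also extends to a maximal Cauchy development agreeing with it on every compact subinterval of $I$, to which Claim \ref{lwp-claim}(iv) may be applied; we keep this extension in mind without further comment.)

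Next, fix an interior point $t_0$ of $I$. Since $\phi_n[t_0]\in K_{\{t_0\}}$, pass to a subsequence with $\phi_n[t_0]\to\Phi_0$ in $\Energy$, and let $\phi:I'\to\Energy$ be the maximal Cauchy development of $\Phi_0$ at $t_0$; by Claim \ref{lwp-claim}(iv), $\phi_n\to\phi$ uniformly in $\Energy$ on compact subintervals of $I'$. The crux is to prove $I\subseteq I'$. If not, there is a finite endpoint $t_*$ of $I'$ with $t_*\in I\setminus I'$ (say $t_*=\sup I'$, the other case being symmetric); then $[t_0,t_*]$ is a compact subinterval of $I$, so $\phi_n[t_*]\in K_{[t_0,t_*]}$, and after a further subsequence $\phi_n[t_*]\to\Psi_*$. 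Let $\psi:I''\to\Energy$ be the maximal Cauchy development of $\Psi_*$ at $t_*$; Claim \ref{lwp-claim}(iv) gives $\phi_n\to\psi$ uniformly on compact subsets of $I''$. As $I'$ and $I''$ are open intervals sharing points just below $t_*$, on that overlap $\phi=\lim_n\phi_n=\psi$, so $I'=I''$ by Lemma \ref{Maxcauchy}, contradicting $t_*\in I''\setminus I'$. Hence $I\subseteq I'$, and $\phi|_I$ — an energy class solution, e.g.\ by Lemma \ref{uniform} — is the asserted uniform limit on compact subintervals of $I$ (the passage to a further subsequence above occurred only inside a proof by contradiction, so the subsequence selected at $t_0$ is the final one).

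Finally, for each $t\in I$ the bounded sequences $(N_n(t))_n$ and $(x_n(t))_n$ have a common subsequential limit $(N(t),x(t))\in(0,\infty)\times\R^2$; along that ($t$‑dependent) subsequence $\Dil_{N_n(t)}\Trans_{-x_n(t)}\phi_n[t]\to\Dil_{N(t)}\Trans_{-x(t)}\phi[t]$ by joint continuity of the action together with $\phi_n[t]\to\phi[t]$, and since the left‑hand side lies in the closed set $K$ for every term, so does the limit. Thus $\phi$ is almost periodic with frequency scale function $N$, position function $x$, and compactness modulus $K$, and since $N(t)$ and $x(t)$ are subsequential limits of $(N_n(t))_n$ and $(x_n(t))_n$ we get \eqref{nn} and \eqref{xx}. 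The only genuine obstacle is the step $I\subseteq I'$: this is precisely where \eqref{ntj}–\eqref{xnt} are needed (to produce the fixed compact sets $K_J$, hence a bona fide limit $\Psi_*$ at a hypothetical interior endpoint), and it reduces to the maximality axiom Claim \ref{lwp-claim}(v) packaged through Lemma \ref{Maxcauchy}; everything else is routine compactness and continuity.
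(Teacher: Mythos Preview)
Your proof is correct and follows essentially the same strategy as the paper: freeze data at a point $t_0$, use Claim~\ref{lwp-claim}(iv) to propagate the limit, argue that the maximal lifespan $I'$ must swallow $I$, and then recover $N(t),x(t)$ by a $t$-dependent subsequence argument together with continuity of the group action. The only noticeable tactical difference is in how you rule out a finite endpoint $t_*\in I$ of $I'$: the paper observes directly that $\phi(t)$ stays in a fixed compact set on $J\cap I'$ (by your $K_J$ argument), so $\phi$ has a convergent subsequence as $t\to t_*$, contradicting Claim~\ref{lwp-claim}(v); you instead relaunch a maximal development $\psi$ from $t_*$, match it with $\phi$ on the overlap, and invoke Lemma~\ref{Maxcauchy} to force $I'=I''\ni t_*$. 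Both reach the same contradiction via the maximality axiom, so this is a stylistic rather than substantive difference.
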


\begin{proof}  Pick some time $t_0 \in I$.  By \eqref{ntj}, \eqref{xnt}, we may pass to a subsequence so that $N_n(t_0)$ and $x_n(t_0)$ are both convergent (with the former converging to a non-zero value).  Since the $\phi_n$ are almost periodic with the same compactness modulus $K$, and because the action of scaling and translation on $\Energy$ is continuous (Claim \ref{energy-claim}(iii)), this implies (after passing to a further subsequence) that $\phi_n[t_0]$ converges in $\Energy$ to some limit $\Phi_0$.

Let $\phi: I_* \to \Energy$ be the maximal Cauchy development from the initial data $\Phi_0$ at time $t_0$.  Then we see from Claim \ref{lwp-claim}(iv) that $\phi_n$ converges pointwise on $I \cap I_*$ in $\Energy$ to $\phi$.  Now suppose that some endpoint $t_*$ of $I_*$ lies in $I$. Let $J$ be a compact interval in $I \cap \overline{I_*}$ containing $t_*$ as an endpoint.  By hypothesis, $N_n(t)$ is bounded uniformly above and below on $J$, and $x_n(t)$ is also bounded, thus by Claim \ref{energy-claim}(iii) $\phi_n(t)$ ranges in a fixed compact subset of $\Energy$ for all $n$ and all $t \in J$; taking limits, we see that $\phi(t)$ ranges in the same compact set for $t \in J \cap I_*$.  But then we can find a sequence $t_n \in J \cap I_*$ converging to $t_*$ such that $\phi(t_n)$ converges in $\Energy$, contradicting Claim \ref{lwp-claim}(v).  Thus we see that no endpoint of $I_*$ can lie in $I$, and thus $I \subset I_*$.  Restricting $\phi$ to $I$, we obtain the desired solution $\phi: I \to \Energy$.

It remains to show that $\phi$ is almost periodic with the desired bounds.  Recall for each fixed $t \in I$ that $\phi_n$ converges in $\Energy$ to $\phi$.  By \eqref{ntj}, \eqref{xnt}, we may thus find a subsequence $n_{t,j}$ (depending on $t$) for which $N_{n_{t,j}}(t)$ and $x_{n_{t,j}}(t)$ converge to limits $N(t)$, $x(t)$ obeying \eqref{nn}, \eqref{xx}.  Since the $\phi_n$ were already almost periodic with compactness modulus $K$, the claim follows (again using the continuity from Claim \ref{energy-claim}(iii)).
\end{proof}

Now we use the conservation law \eqref{conserv}.

\begin{lemma}[Conservation identities]\label{conslemma}  Let $\phi: [T_-,T_+] \to \Energy$ be a energy class solution, and let $X^\alpha$ be a smooth vector field.  Let $\T$ be the stress-energy tensor of $\phi$.
\begin{itemize}
\item[(i)] If $X$ is compactly supported, then we have
\begin{equation}\label{tab2}
\int_{T_-}^{T_+} \int_{\R^2} \T_{\alpha \beta} \partial^\alpha X^\beta\ dx dt
= -\int_{\R^2} \T_{0 \beta} X^\beta\ dx|_{t=T_-}^{t=T_+}.
\end{equation}
\item[(ii)] If instead $\phi$ is a classical wave map, $x_0 \in \R^2$, and $t_0 > T_+$, then we have
\begin{equation}\label{stokes}
\begin{split}
\int_{T_-}^{T_+} \int_{|x-x_0| \leq |t-t_0|} \T_{\alpha \beta} \partial^\alpha X^\beta\ dx dt
&= -\int_{|x-x_0| \leq |t-t_0|} \T_{0 \beta} X^\beta\ dx|_{t=T_-}^{t=T_+}\\
&\quad -\int_{T_-}^{T_+} \int_{|x-x_0|=|t-t_0|} \T_{\alpha\beta} X^\beta L^\alpha\ d\sigma dt
\end{split}
\end{equation}
where $d\sigma$ is surface measure on the circle $\{ x \in \R^2: |x-x_0| = |t-t_0| \}$, and $L^\alpha$ is the inward null vector field $L = \partial_t - \frac{x-x_0}{|x-x_0|} \cdot \nabla_x$.
\end{itemize}
\end{lemma}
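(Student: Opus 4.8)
The plan is to derive both identities from the ordinary (Euclidean) divergence theorem on $\R^{1+2}$, applied to the vector field $W$ with components $W^\alpha := g^{\alpha\gamma}\T_{\gamma\beta}X^\beta$ (so that $W^0 = -\T_{0\beta}X^\beta$ and $W^i = \T_{i\beta}X^\beta$), whose Euclidean spacetime divergence $\partial_t W^0 + \partial_{x_1}W^1 + \partial_{x_2}W^2$ is exactly $\partial^\alpha(\T_{\alpha\beta}X^\beta)$. The one genuinely non-trivial input is the Leibniz identity
\[
\partial^\alpha(\T_{\alpha\beta}X^\beta) \;=\; (\partial^\alpha \T_{\alpha\beta}) X^\beta + \T_{\alpha\beta}\,\partial^\alpha X^\beta,
\]
combined with the conservation law \eqref{conserv}: for a \emph{classical} wave map the first term vanishes, leaving $\partial^\alpha(\T_{\alpha\beta}X^\beta) = \T_{\alpha\beta}\partial^\alpha X^\beta$. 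Everything else is the bookkeeping of boundary terms and orientations.

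For (i), I would first treat the case of a classical wave map by applying the divergence theorem to $W$ over the slab $[T_-,T_+]\times\R^2$. The compact support of $X$ kills the contribution from spatial infinity, so only the two time slices $\{t=T_\pm\}$ survive; their outward Euclidean normals are $\pm\partial_t$, so the boundary integrand is $-\T_{0\beta}X^\beta$ on $\{t=T_+\}$ and $+\T_{0\beta}X^\beta$ on $\{t=T_-\}$, which together give the right-hand side of \eqref{tab2}. To pass to a general energy class solution $\phi\colon[T_-,T_+]\to\Energy$, choose $t_0\in(T_-,T_+)$; by Claim \ref{energy-claim}(i) pick classical data $\Phi_{0,n}$ with $\iota(\Phi_{0,n})\to\phi[t_0]$, let $\phi_n$ be the corresponding maximal Cauchy developments (classical by Claim \ref{lwp-claim}(iii)), and note that $[T_-,T_+]$ is a compact subinterval of the open maximal lifespan of $\phi$, so Claim \ref{lwp-claim}(iv) gives $[T_-,T_+]\subset I_n$ for all large $n$ and $\phi_n\to\phi$ uniformly in $\Energy$ on $[T_-,T_+]$. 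By Claim \ref{energy-claim}(iv), $\T(\phi_n[\cdot])\to\T(\phi[\cdot])$ in $L^1(\R^2)$, uniformly on $[T_-,T_+]$; since $\partial^\alpha X^\beta$ is bounded with compact support, both sides of \eqref{tab2} for $\phi_n$ converge to those for $\phi$, and \eqref{tab2} follows in general.

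For (ii), no limiting argument is needed, since $\phi$ is classical. Here I would apply the divergence theorem to $W$ over the truncated backward solid light cone $\Omega := \{(t,x)\colon T_-\le t\le T_+,\ |x-x_0|\le|t-t_0|\}$; the hypothesis $t_0>T_+$ ensures $|t-t_0|=t_0-t>0$ throughout $\Omega$, so $\Omega$ is a genuine compact region. Its boundary has three pieces. The disk caps $\{t=T_\pm,\ |x-x_0|\le|t-t_0|\}$ contribute, exactly as in (i), the term $-\int_{|x-x_0|\le|t-t_0|}\T_{0\beta}X^\beta\,dx\big|_{t=T_-}^{t=T_+}$ (and here, $\Omega$ being bounded, no decay of $X$ is needed). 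The lateral null hypersurface $\mathcal{C} := \{T_-\le t\le T_+,\ |x-x_0|=t_0-t\}$ is the zero set of the function $|x-x_0|-(t_0-t)$, whose spacetime gradient is $\bigl(1,\tfrac{x-x_0}{|x-x_0|}\bigr)$; hence the outward Euclidean unit normal on $\mathcal{C}$ is $\tfrac{1}{\sqrt2}\bigl(1,\tfrac{x-x_0}{|x-x_0|}\bigr)$, and parametrising $\mathcal{C}$ by $t$ and the angle on the circle $\{|x-x_0|=t_0-t\}$ one finds its Euclidean surface measure to be $\sqrt2\,d\sigma\,dt$ with $d\sigma$ the arclength on that circle. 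Pairing $W$ with the unit normal and cancelling the two factors of $\sqrt2$, a short computation identifies the lateral integrand with $-\T_{\alpha\beta}L^\alpha X^\beta\,d\sigma\,dt$, where $L^\alpha$ has components $L^0=1$, $L^i=-\tfrac{(x-x_0)_i}{|x-x_0|}$, i.e.\ $L=\partial_t-\tfrac{x-x_0}{|x-x_0|}\cdot\nabla_x$; this yields the last term of \eqref{stokes}.

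The main obstacle is notational rather than analytic: one must carefully track the factor $-1$ produced by raising the time index against the Minkowski metric, the orientation of each boundary component, and the exact collapse of the lateral integrand onto $\T_{\alpha\beta}X^\beta L^\alpha\,d\sigma\,dt$. Apart from the approximation step in (i), the proof uses no input beyond the pointwise conservation law \eqref{conserv} for classical wave maps; indeed, it is precisely the classicality assumption in (ii) that makes \eqref{conserv} available pointwise, whereas for energy class solutions $\T$ is merely an $L^1$ function and \eqref{conserv} holds only distributionally, which is why (i) must be obtained by limiting from the classical case.
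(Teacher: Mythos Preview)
Your proposal is correct and follows essentially the same approach as the paper, which simply says the claims follow from \eqref{conserv} and Stokes' theorem (or integration by parts), with (i) extended to energy class solutions by a limiting argument via Claim \ref{energy-claim}(i),(iv) and Claim \ref{lwp-claim}(iii). Your write-up supplies the bookkeeping that the paper omits, and in fact is slightly more careful in explicitly invoking Claim \ref{lwp-claim}(iv) to guarantee that the classical approximants exist on all of $[T_-,T_+]$ and converge there.
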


\begin{proof}  For classical wave maps, the claims follow immediately from \eqref{conserv} and Stokes' theorem (or integration by parts).  For claim (i) for energy class solutions, the claim then follows by a limiting argument using Claim \ref{energy-claim}(i), (iv) and Claim \ref{lwp-claim}(iii).
\end{proof}

\begin{corollary}[Finite speed of propagation]\label{finprop}  Let $\phi: [T_-,T_+] \to \Energy$ be a energy class solution with stress-energy tensor $\phi$.  Then
$$ \int_{|x-x_0| \leq |T_+ - t_0|} \T_{00}(T_+,x)\ dx \leq \int_{|x-x_0| \leq |T_- - t_0|} \T_{00}(T_-,x)\ dx $$
or equivalently
$$ \int_{|x-x_0| > |T_+ - t_0|} \T_{00}(T_+,x)\ dx \geq \int_{|x-x_0| > |T_- - t_0|} \T_{00}(T_-,x)\ dx $$
for all $x_0 \in \R^2$ and $t_0 > T_+$.
\end{corollary}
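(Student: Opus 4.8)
The plan is to derive this from the divergence identity \eqref{stokes} of Lemma~\ref{conslemma}(ii), applied to the constant timelike vector field $X = \partial_t$, after first reducing from the abstract energy class to classical wave maps (for which \eqref{stokes} is available).

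\emph{Reduction to classical wave maps.} First I would fix any $t_0 \in [T_-,T_+]$ and, using Claim~\ref{energy-claim}(i), choose classical data $\Phi_{0,n} \in \S$ with $\iota(\Phi_{0,n}) \to \phi[t_0]$ in $\Energy$; let $\phi_n : I_n \to \Energy$ be the associated maximal Cauchy developments, which are classical by Claim~\ref{lwp-claim}(iii). The maximal lifespan of $\phi$ is open (Claim~\ref{lwp-claim}(i)) and contains the compact interval $[T_-,T_+]$, so Claim~\ref{lwp-claim}(iv) forces $[T_-,T_+] \subset I_n$ for all large $n$ and $\phi_n \to \phi$ uniformly in $\Energy$ on $[T_-,T_+]$. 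By Claim~\ref{energy-claim}(iv) the stress-energy tensors converge, $\T(\phi_n[T_\pm]) \to \T(\phi[T_\pm])$ in $L^1(\R^2)$; since the integration domains $\{|x-x_0| \le |T_\pm - t_0|\}$ do not depend on $n$, once the Corollary is known for each classical $\phi_n$ it passes to the limit. Thus it suffices to treat a classical wave map.

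\emph{The classical case.} Apply \eqref{stokes} with the same $x_0$, the given $t_0 > T_+$, and $X^\beta = \delta^\beta_0$. Then $\partial^\alpha X^\beta = 0$, so the spacetime integral on the left of \eqref{stokes} vanishes, and since $\T_{0\beta}X^\beta = \T_{00}$ the identity reduces to
\begin{multline*}
\int_{|x-x_0| \le |T_+ - t_0|} \T_{00}(T_+,x)\,dx - \int_{|x-x_0| \le |T_- - t_0|} \T_{00}(T_-,x)\,dx \\
= -\int_{T_-}^{T_+} \int_{|x-x_0|=|t-t_0|} \T_{\alpha 0} L^\alpha\, d\sigma\, dt .
\end{multline*}
It remains to check that the flux integrand $\T_{\alpha 0} L^\alpha$ is nonnegative. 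Writing $\omega := \frac{x-x_0}{|x-x_0|}$, so that $L = \partial_t - \omega\cdot\nabla_x$ has components $L^0=1$, $L^i = -\omega^i$, we have $\T_{\alpha 0}L^\alpha = \T_{00} - \omega^i \T_{i0}$. From \eqref{stress-def} (with $g_{i0}=0$), $\T_{00} = \frac{1}{2}|\partial_t\phi|_{h(\phi)}^2 + \frac{1}{2}|\nabla_x\phi|_{h(\phi)}^2$ and $\T_{i0} = \langle \partial_i\phi,\partial_t\phi\rangle_{h(\phi)}$, so completing the square and using the orthogonal decomposition $|\nabla_x\phi|_{h(\phi)}^2 = |\omega\cdot\nabla_x\phi|_{h(\phi)}^2 + |\nabb\phi|_{h(\phi)}^2$ into radial and angular parts gives
$$ \T_{\alpha 0}L^\alpha = \frac{1}{2}|\partial_t\phi - \omega\cdot\nabla_x\phi|_{h(\phi)}^2 + \frac{1}{2}|\nabb\phi|_{h(\phi)}^2 \ge 0, $$
which is the dominant energy condition contracted with the future null vector $L$. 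Hence the right-hand side of the displayed identity is $\le 0$, proving the first inequality; the equivalent second inequality follows by subtracting it from the conservation of total energy, $\int_{\R^2}\T_{00}(T_+,x)\,dx = \E(\phi) = \int_{\R^2}\T_{00}(T_-,x)\,dx$.

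\emph{Main obstacle.} The sign computation is routine once one recognizes it as null-flux (dominant-energy) positivity, so the only delicate point is the reduction: the identity \eqref{stokes} has no direct counterpart for abstract energy class solutions (the wave maps equation \eqref{cov} is not available distributionally for $\H$-valued maps), so one must genuinely pass through classical approximants and verify they persist on all of $[T_-,T_+]$ with enough convergence — precisely what Claims~\ref{lwp-claim}(iii),(iv) and \ref{energy-claim}(i),(iv) provide.
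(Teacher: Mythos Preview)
Your proof is correct and follows exactly the paper's approach: reduce to classical wave maps by the limiting argument supplied by Claims~\ref{energy-claim}(i),(iv) and~\ref{lwp-claim}(iii),(iv), then apply \eqref{stokes} with $X=\partial_t$ and observe the null flux $\T_{\alpha 0}L^\alpha \ge 0$. One cosmetic point: you use the symbol $t_0$ both for the initial time of the approximating developments and for the cone vertex $t_0>T_+$ from the statement; renaming the former (say to $t_1$) would avoid the clash.
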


\begin{proof} By limiting arguments as in Lemma \ref{conslemma} we can take $\phi$ to be a classical wave map.  We now apply \eqref{stokes} with $X := \partial_t$ and observe (from \eqref{stress-def}) that the energy flux $\T_{\alpha\beta} X^\beta L^\alpha$ is non-negative.
\end{proof}

As a corollary we obtain an approximate Lipschitz property on $x(t)$ (generalising the claim in Lemma \ref{quasi}):

\begin{lemma}[Lipschitz nature of $x(t)$]\label{xlip}  Let $\phi: I \to \Energy$ be an almost periodic solution with frequency scale function $N: I \to (0,+\infty)$ and scale function $x: I \to \R^2$ with non-zero energy.  Then there exists a constant $C > 0$ such that $|x(t)-x(t')| \leq |t-t'| + C(1/N(t) + 1/N(t'))$ for all $t, t' \in I$.
\end{lemma}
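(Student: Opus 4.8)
The plan is to combine the finite speed of propagation (Corollary \ref{finprop}) with the spatial localisation of the energy density (Lemma \ref{localise}). Since $\E(\phi)>0$, fix $\eps := \E(\phi)/4$ and let $C_0$ be the constant from Lemma \ref{localise} for this choice of $\eps$, so that at every time $s\in I$ at least $\E(\phi)-\eps = \tfrac34\E(\phi)$ of the energy lies inside the disc $\{|x-x(s)|<C_0/N(s)\}$, and at most $\eps = \tfrac14\E(\phi)$ lies outside it. First I would fix $t,t'\in I$ with (say) $t<t'$, set $r := |t-t'| = t'-t$, and suppose for contradiction that $|x(t)-x(t')|$ is much larger than $r + C(1/N(t)+1/N(t'))$ for a large constant $C$ to be chosen; the goal is to derive an energy imbalance.

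The key step is a finite-speed-of-propagation argument. Apply Corollary \ref{finprop} on the time interval $[t,t']$ with $x_0 := x(t)$ and with the reference time $t_0$ chosen so that the backward light cone from $(t_0,x_0)$ has spatial radius $\rho_t := C_0/N(t)$ at time $t$ (concretely $t_0 := t' + \rho_t$, noting $t_0>t'=T_+$ as required, and then the radius at time $t$ is $t_0-t = r+\rho_t$; actually it is cleaner to take $t_0 := t + \rho_t$ when $\rho_t \geq r$, and otherwise enlarge $\rho_t$ harmlessly). The point is that the disc of radius $\rho_t$ about $x(t)$ at time $t$ contains all but $\eps$ of the energy, so the disc of radius $r+\rho_t$ about $x(t)$ at time $t'$ also contains all but $\eps$ of the energy, by Corollary \ref{finprop}. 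On the other hand, by Lemma \ref{localise} at time $t'$, the disc $\{|x-x(t')|<C_0/N(t')\}$ contains at least $\tfrac34\E(\phi)$ of the energy. If these two discs at time $t'$ were disjoint, the total energy would be at least $\tfrac34\E(\phi) + (\E(\phi)-\eps) = \tfrac32\E(\phi)$, a contradiction; hence they must intersect, which forces
$$ |x(t)-x(t')| \leq (r + C_0/N(t)) + C_0/N(t') = |t-t'| + C_0/N(t) + C_0/N(t'). $$
Setting $C := C_0$ gives the claim, and by symmetry in $t,t'$ the same bound holds when $t>t'$.

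The main obstacle — really the only thing requiring care — is the bookkeeping of the reference point $t_0$ and the cone radius in Corollary \ref{finprop}: one needs $t_0 > T_+ = t'$, and one wants the spatial radius of the relevant light cone to be exactly $C_0/N(t)$ at the initial time $t=T_-$, which pins down $t_0 = t' + C_0/N(t)$ and then the radius at time $t'$ is $t_0 - t' $... wait, the radius at time $T_- = t$ is $|T_- - t_0| = t_0 - t = (t'-t) + C_0/N(t) = r + C_0/N(t)$, which is what we actually want to be $C_0/N(t)$ — so one should instead choose $t_0$ so that $|T_- - t_0| = C_0/N(t)$, i.e. $t_0 = t + C_0/N(t)$, and then require $t_0 > t'$, i.e. $C_0/N(t) > r$. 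When this fails (that is, when $r \geq C_0/N(t)$), one simply replaces the radius $C_0/N(t)$ by $r + C_0/N(t)$, which only weakens the energy-localisation input by nothing and still yields $|x(t)-x(t')| \leq (r+C_0/N(t)) + C_0/N(t')$, i.e. the asserted inequality; so both cases are absorbed into the stated bound. Beyond this light-cone arithmetic, everything is a direct quotation of Corollary \ref{finprop} and Lemma \ref{localise}, so no further difficulty arises.
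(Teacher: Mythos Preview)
Your approach is essentially the same as the paper's: combine Lemma \ref{localise} with Corollary \ref{finprop} to force the localisation balls at times $t$ and $t'$ to overlap, then read off the distance bound. The paper uses $\eps=E/2$ rather than $E/4$ and phrases the overlap as ``the ball at $t'$ cannot lie in the complement of the enlarged ball at $t$'', but the idea is identical.

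There is, however, a genuine direction error in your light-cone bookkeeping. With your convention $t<t'$ and with $t_0>T_+=t'$ as required in Corollary \ref{finprop}, the cone radius $|s-t_0|=t_0-s$ is \emph{larger} at the earlier time $t$ than at the later time $t'$; no choice of $t_0$ can make the radius equal to $\rho_t$ at time $t$ and $\rho_t+r$ at time $t'$ simultaneously. What you are invoking is the forward-cone inequality (energy outside $B(x_0,\rho)$ at time $t$ bounds energy outside $B(x_0,\rho+r)$ at time $t'$), which is the time-reversed form of Corollary \ref{finprop}, not the stated one. Your attempted repairs in the last paragraph (choosing $t_0=t+\rho_t$, or enlarging $\rho_t$ to $r+\rho_t$) still land in the backward-cone regime and give the inequality in the wrong direction. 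The fix is immediate: either assume $t'>t$ and swap the roles of $t$ and $t'$ (the conclusion is symmetric), or note that the forward-cone estimate follows by applying Corollary \ref{finprop} to $\Rev\phi$. The paper's proof avoids the confusion by working with the exterior form of the corollary in the time ordering in which it directly applies and then asserting the conclusion for all $t,t'$.
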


\begin{proof} Let $E > 0$ denote the energy of $\phi$.  By Lemma \ref{localise} there exists a constant $C > 0$ such that
$$ \int_{|x-x(t)| \geq C/N(t)} \T_{00}(\phi[t])(x)\ dx < E/2$$
for all $t \in I$, and thus also
$$ \int_{|x-x(t)| < C/N(t)} \T_{00}(\phi[t])(x)\ dx \geq E/2.$$
Applying Corollary \ref{finprop} we conclude
$$ \int_{|x-x(t)| \geq C/N(t) + |t-t'|} \T_{00}(\phi[t'])(x)\ dx \leq \int_{|x-x(t)| \geq C/N(t)} \T_{00}(\phi[t])(x)\ dx < E/2$$
for any $t, t' \in I$.  Thus the ball $\{ x: |x-x(t')| < C/N(t') \}$ cannot be contained in $|x-x(t)| \geq C/N(t) + |t-t'|$, and the claim follows.
\end{proof}

\section{Normalised ancient solutions}

We now begin the proof of Theorem \ref{main}.  In this section we assume that Claim \ref{energy-claim}, Claim \ref{lwp-claim}, and Claim \ref{minimal} holds.

We first formalise a definition from the introduction.

\begin{definition}[Normalised ancient solution]  A \emph{normalised ancient solution} is an almost periodic partial Cauchy development $\phi: (-\infty,0] \to \Energy$ with frequency scale and position functions $N: (-\infty,0] \to (0,+\infty)$, $x: (-\infty,0] \to \R^2$ such that $N(0)=1$, $x(0)=0$, and $N(t) \leq 1$ for all $t \leq 0$.
\end{definition}

\begin{remark} Note that such solutions can be continued a little bit beyond time $t=0$ thanks to the contrapositive of Lemma \ref{blow}. However, we will not use this continuation here.
\end{remark}
In this section we show

\begin{proposition}\label{normas}  Suppose that Conjecture \ref{conj2} fails.  Then there exists a normalised ancient solution with non-zero energy.
\end{proposition}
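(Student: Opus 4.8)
The plan is to feed the failure of Conjecture \ref{conj2} into Claim \ref{minimal}, which hands us an almost periodic maximal Cauchy development $\phi : I \to \Energy$ of non-zero energy with frequency scale function $N : I \to (0,+\infty)$ and position function $x : I \to \R^2$, and then to normalise it using the symmetries \eqref{time-trans}, \eqref{space-trans}, \eqref{time-reverse}, \eqref{cov-scaling} together with the compactness machinery of Section 2. Concretely, I would search for a sequence of times $t_n \in I$ and rescalings $\phi_n$ as in \eqref{phin} (possibly after first applying the time reversal \eqref{time-reverse}) such that the rescaled data satisfy $N_{\phi_n}(0)=1$, $x_{\phi_n}(0)=0$, $\limsup_{n\to\infty} N_{\phi_n}(t) \le 1$ for every $t \le 0$, and such that the rescaled lifespans $I_n$ eventually contain every compact subinterval of $(-\infty,0]$. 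Granting such a sequence, one applies Lemma \ref{compaq} with the interval there taken to be $(-\infty,0]$: hypotheses \eqref{ntj}, \eqref{xnt} hold because $N_{\phi_n} \le 2$ on $(-\infty,0]$ for $n$ large, because iterating the quasicontinuity estimate of Lemma \ref{quasi} (whose constants $c,C$ depend only on the common compactness modulus and the common energy) leftwards from $t=0$, in steps of length $\ge c/2$ thanks to the upper bound $N_{\phi_n}\le 2$, produces a uniform-in-$n$ lower bound $N_{\phi_n} \ge C^{-O(R)}$ on $[-R,0]$, and because Lemma \ref{xlip} then bounds $x_{\phi_n}$ uniformly on $[-R,0]$. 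Lemma \ref{compaq} thus produces an almost periodic partial Cauchy development $\phi_\infty : (-\infty,0] \to \Energy$ with, by \eqref{nn}--\eqref{xx}, $N_{\phi_\infty}(0)=1$, $x_{\phi_\infty}(0)=0$ and $N_{\phi_\infty}(t) \le \limsup_n N_{\phi_n}(t) \le 1$ for $t \le 0$; this is exactly a normalised ancient solution, and its energy equals $\lim_n \E(\phi_n) = \E(\phi) \neq 0$ by conservation of energy and the continuity of $\E$ in Claim \ref{energy-claim}(iv).

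It remains to produce $(t_n)$, and I would split into two cases according to the behaviour of $N$ as one runs towards the past endpoint $t_- := \inf I$. \emph{Case 1: $g(t) := \sup_{s \in I,\ s \le t} N(s)$ is finite for every $t \in I$.} Then $t_- = -\infty$, since a finite past endpoint would force $N$ to blow up there by Lemma \ref{blow}. Pick $t_n \in I$ with $N(t_n) \ge (1-\tfrac1n) g(t_n)$ --- possible, since one may first choose a time at which the (monotone) running supremum $g$ is as large as desired and then retreat to a near-maximiser of $N$ to its left --- and let $\phi_n$ be the rescaling \eqref{phin}. Then $N_{\phi_n}(t) = N(t_n + t/N(t_n))/N(t_n) \le (1-\tfrac1n)^{-1}$ for all $t \le 0$, and the fill-up of $(-\infty,0]$ is automatic because $t_- = -\infty$. \emph{Case 2: $g(t_\ast) = +\infty$ for some $t_\ast \in I$.} Since $g$ is non-decreasing and, by the local boundedness of $N$ (Corollary \ref{localbound}), cannot pass from finite to infinite at an interior time, one deduces $g \equiv +\infty$ on $I$, i.e.\ $\limsup_{t \to t_-^+} N(t) = +\infty$: $N$ genuinely blows up towards the past endpoint. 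Fix $t_0 \in I$; using this blowup together with Corollary \ref{localbound} on the compact pieces $[\tau, t_0]$, choose $s_n \to t_-^+$ with $N(s_n) \to \infty$ and $N(s_n) \ge (1-\tfrac1n) \sup_{s \in [s_n,t_0]} N(s)$, and let $\phi_n$ be the rescaling of $\Rev \phi$ centred at $-s_n$ with scale $N(s_n)$. The past of time $0$ for $\phi_n$ corresponds to the interval $[s_n, t_0]$ in the future of $s_n$ for $\phi$, so there $N_{\phi_n} \le (1-\tfrac1n)^{-1}$; and the fill-up of $(-\infty,0]$ holds since $(t_0 - s_n) N(s_n) \to +\infty$ (as $t_0 - s_n \to t_0 - t_- > 0$ and $N(s_n) \to \infty$). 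In either case the $\phi_n$ form a rescaled (and, in Case 2, reversed) family with a common compactness modulus, so Lemma \ref{compaq} applies as above.

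The heart of the matter is precisely this extraction step: one needs to exhibit an \emph{approximate running maximum} of $N$ over a half-infinite stretch of the past of some time, and the one obstruction to doing so directly --- namely that $N$ may instead grow without bound as one approaches the past endpoint of $I$ --- is exactly what the time-reversal symmetry resolves, by letting one ``zoom into the ancient singularity'' (with Lemma \ref{blow} controlling the rate at which $N$ blows up there) and reverse time so that the singular side becomes the future rather than the past. The remaining points --- the uniformity of the quasicontinuity and Lipschitz constants across the rescaled family, which holds because those constants depend only on the common compactness modulus and energy, and the verification that the limiting lifespan is all of $(-\infty,0]$ --- are routine consequences of Lemmas \ref{quasi}, \ref{xlip} and \ref{compaq}.
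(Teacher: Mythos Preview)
Your proof is correct and follows essentially the same strategy as the paper: invoke Claim~\ref{minimal}, split into two cases according to whether $N$ is bounded in one time direction, use a ``near-maximiser'' (world-record) construction to produce rescalings $\phi_n$ with $N_{\phi_n}\le 1+o(1)$ on the past, and extract a limit via Lemma~\ref{compaq}. The only differences are cosmetic: you split on the behaviour of $N$ towards $\inf I$ while the paper splits on the behaviour towards $\sup I$, and you run the compactness argument in both cases whereas the paper, in its bounded case, simply time-reverses and renormalises $N$ directly without passing to a limit. Your more uniform treatment has the minor advantage that the normalisations $N_\infty(0)=1$, $N_\infty\le 1$ come out exactly from \eqref{nn} rather than requiring an additional adjustment of the frequency scale function.
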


\begin{proof} Applying Claim \ref{minimal}, we can find an almost periodic solution $\phi: I \to \Energy$ of non-zero energy.  By time translation \eqref{time-trans} we may assume $0 \in I$, thus $0 < \sup(I) \leq +\infty$.

Suppose first that $\sup_{0 \leq t < \sup(I)} N(t) < \infty$, thus $N$ is bounded from above by some constant $C$.  Then by Lemma \ref{blow} we have $\sup(I) = +\infty$, and the claim follows by applying time reversal \eqref{time-reverse} and modifying $N$ by a constant factor (which does not disrupt the almost periodicity, thanks to Claim \ref{energy-claim}(iii)).

Now suppose instead that $\sup_{0 \leq t < \sup(I)} N(t) = +\infty$.  From Lemma \ref{quasi} and Corollary \ref{localbound} one can find a sequence $t_n \to \sup(I)$ of times in $(0,\sup(I)) \subset I$ and a constant $C > 0$ such that $N(t_n) \to \infty$ and
$$ \sup_{0 \leq t < t_n} N(t) \leq C N(t_n)$$
(basically, one is selecting $t_n$ to be the ``current world record'' times in which $N(t_n)$ has essentially exceeded all priori values of $N$).

If we then let $\phi_n: I_n \to \Energy$ be the rescaled solutions \eqref{phin}, then we see that $0 \in I_n$, that $\inf(I_n) \to -\infty$, and that
$$ \sup_{-T_n \leq t \leq 0} N_n(t) \leq C$$
for some sequence of times $T_n \to +\infty$.  Applying \eqref{sort} and Corollary \ref{localbound} (which applies uniformly to the $\phi_n$ by rescaling from $\phi$) we also see that
$$ \liminf_{n \to \infty} \inf_{-T \leq t \leq 0} |N_n(t)| > 0$$
and
$$ \limsup_{n \to \infty} \sup_{-T \leq t \leq 0} |x_n(t)| < \infty$$
for every $T > 0$.  (In fact, one has more quantitative bounds of the form $|N_n(t)| \geq c/(1+|t|)$ and $|x_n(t)| \leq C(1+|t|)$ for some $c, C > 0$, although we will not need these sharper bounds.)

We can now apply Lemma \ref{compaq} and conclude (after passing to a subsequence) that the $\phi_n$ converge (in the sense of that lemma) to an almost periodic solution $\tilde \phi: (-\infty,0] \to \Energy$ with $\tilde N(0)=1$, $\tilde x(0)=0$, and $\tilde N(t) \leq C$ for all $t \leq 0$.  By rescaling $N$ (and adjusting the compactness modulus $K$ appropriately) one may assume $N(t) \leq 1$ for all $t \leq 0$.  Thus $\tilde \phi$ is a normalised ancient solution.

Finally, since $\phi_n$ converges pointwise to $\tilde \phi$ in $\Energy$, we see (from Claim \ref{energy-claim}(iv)) that the energies $\E(\phi_n)$ converge to $\E(\tilde \phi)$.  On the other hand, since energy is scale invariant for classical data (and hence in the space $\Energy$ also, by limiting arguments) we have $\E(\phi_n) = \E(\phi) > 0$ for all $n$.  Thus $\E(\tilde \phi)$ has non-zero energy as claimed.
\end{proof}

\section{A Morawetz estimate}

We now present a key tool in our analysis, namely a Morawetz estimate for wave maps that shows that such maps tend to be self-similar on the average around the spacetime origin $(0,0)$ (or indeed at any other point in spacetime).  

\begin{proposition}[Morawetz estimate]\label{moraw}  Let $[T_-,T_+] \subset (-\infty,0)$ be a time interval, and let $\phi: [T_-,T_+] \to \Energy$ be a partial Cauchy development.  Then we have
$$ \int_{T_-}^{T_+} \int_{|x| \leq 2|t|} |\partial_t \phi + \frac{x}{t} \cdot \nabla_x \phi|_{h(\phi)}^2\ \frac{dx dt}{|t|}
\leq C \sqrt{\log \frac{|T_-|}{|T_+|}} \E(\phi)$$
for some absolute constant $C > 0$, where we interpret the integrand in terms of the stress-energy tensor using \eqref{destress}.
\end{proposition}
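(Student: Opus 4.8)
The plan is to construct a suitable Morawetz vector field $X$ adapted to the past light cone through the spacetime origin, apply the conservation identity \eqref{tab2} (or rather \eqref{stokes}, to keep track of boundary fluxes), and show that the resulting spacetime integral controls the left-hand side while the boundary terms are controlled by the energy. By the usual limiting argument (Claim \ref{energy-claim}(i), (iv) and Claim \ref{lwp-claim}(iii)), it suffices to prove the estimate for classical wave maps $\phi$, for which $\T$ is smooth and \eqref{conserv} holds pointwise. Since $[T_-,T_+]\subset(-\infty,0)$, on this slab we have $t<0$ and $|t|=-t$; the natural object is the radial vector field pointing along the cone $\{|x|=|t|\}$ emanating from the origin, i.e. something like $X = t\partial_t + x\cdot\nabla_x$ (the scaling/self-similar vector field) or a cut-off variant thereof.

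The key computation is that $\partial^\alpha X^\beta$ contracted against $\T_{\alpha\beta}$ produces, up to harmless terms, the trace of $\T$ plus a term proportional to $\T_{\alpha\beta}$ applied to the null directions transverse to the cone; using the identity \eqref{destress} one rewrites the relevant contraction as a nonnegative multiple of $|\partial_t\phi + \frac{x}{t}\cdot\nabla_x\phi|_{h(\phi)}^2$. Concretely I would take $X^\alpha$ to be a smooth function times $(t,x)$ — e.g. $X^\alpha = a(t,x)\,\bar L^\alpha$ where $\bar L = \partial_t + \frac{x}{t}\cdot\nabla_x$ is (the relevant rescaling of) the null generator of the cone through the origin — and choose the weight $a$ so that $\partial^\alpha X^\beta \T_{\alpha\beta}$ has a definite sign on $\{|x|\le 2|t|\}$ and equals (a constant times) $\frac{1}{|t|}|\bar L\phi|_{h(\phi)}^2$ there. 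The logarithmic weight $dt/|t|$ and the factor $\sqrt{\log(|T_-|/|T_+|)}$ strongly suggest that one does \emph{not} use a single fixed vector field, but rather truncates $X$ smoothly in the region $|x|\le 2|t|$ and then \emph{averages} the resulting identity over dyadic scales (or integrates in the self-similar time variable $s=\log|t|$), picking up a $\sqrt{\cdot}$ rather than a full $\log$ via Cauchy–Schwarz: one bounds the time-averaged positive quantity by (energy) $\times$ (number of dyadic scales)$^{1/2}$ = (energy) $\times (\log|T_-|/|T_+|)^{1/2}$, which is exactly the stated bound. This is the mechanism from Grillakis' energy-Morawetz estimate \cite{grillakis-energy} that the proposition is attributed to.

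The main obstacle I expect is controlling the boundary and flux terms: when one applies \eqref{stokes} the right-hand side contains $-\int \T_{0\beta}X^\beta\,dx$ at times $T_\pm$ and a flux integral over the mantle $\{|x-x_0|=|t-t_0|\}$. One must (a) check that the energy-flux through the light cone has a good sign (as in Corollary \ref{finprop}, $\T_{\alpha\beta}X^\beta L^\alpha \ge 0$ when $X$ is chosen compatibly with $L$), so it can be discarded, and (b) bound the time-slice boundary terms $\int_{\R^2}\T_{0\beta}X^\beta\,dx$ by $O(\E(\phi))$ uniformly — this requires the weight $a$ in $X$ to be bounded on the support, which forces the truncation to $|x|\le 2|t|$ and is the reason the integrand on the left is also restricted to $|x|\le 2|t|$. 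A secondary technical point is that the vector field $X$ is \emph{not} compactly supported in space (it grows linearly), so to invoke \eqref{tab2} one either uses \eqref{stokes} on truncated light cones and lets the cone radius tend to infinity using the spatial decay of $\T$ from the Schwartz hypothesis, or multiplies $X$ by a spatial cutoff and checks the cutoff error terms vanish in the limit; either way the Schwartz-modulo-constants regularity of classical wave maps makes this routine. Once the identity is in hand, the extraction of the $\sqrt{\log}$ is the Cauchy–Schwarz step described above, and the limiting argument back to energy-class solutions is standard given Lemma \ref{conslemma}.
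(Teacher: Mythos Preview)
Your overall framework—reduce to classical maps, contract a well-chosen multiplier vector field against $\T$, and bound the boundary and flux terms by the energy—is correct, and your candidate $X \sim t\partial_t + x\cdot\nabla_x$ is in the right family. But the central step, the one that produces $\sqrt{\log(|T_-|/|T_+|)}$ rather than $\log(|T_-|/|T_+|)$, is misdiagnosed. The square root does \emph{not} come from a Cauchy--Schwarz average over dyadic time scales; with a fixed vector field the boundary terms on each dyadic block are only $O(\E)$, so summing $N\sim\log(|T_-|/|T_+|)$ blocks gives $N\E$, and no Cauchy--Schwarz rearrangement will beat this down to $\sqrt{N}\,\E$ without some additional $\ell^2$-type input that you have not identified.

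The actual Grillakis mechanism in the paper is different: one uses a \emph{one-parameter family} of vector fields $X^\alpha = x^\alpha/\rho_\eps$ with $\rho_\eps := \sqrt{(1+\eps)t^2 - |x|^2}$, $0<\eps\le 1$. The point of the $\eps$ is twofold: it keeps $X$ smooth up to the light cone (so the flux and boundary terms are $O(\eps^{-1/2}\E)$), and after the deformation-tensor computation—using the homogeneity $x^\alpha\partial_\alpha\rho_\eps=\rho_\eps$ to kill the trace term—one is left with a main positive term controlling the desired integrand and an error of size $O(\eps^{1/2}\log(|T_-|/|T_+|)\,\E)$ coming from the discrepancy between $\partial^\alpha\rho_\eps$ and $x^\alpha/\rho_\eps$. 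Optimising $\eps^{-1/2}+\eps^{1/2}\log$ over $\eps$ yields the square root. This parameter-optimisation is the key idea you are missing. Note also that the paper treats the interior $\{|x|\le|t|\}$ and the annulus $\{|t|\le|x|\le 2|t|\}$ separately, the latter by a simpler flux bound on cones $\{|x|=c|t|\}$, $1\le c\le 2$; your sketch conflates the two regions.
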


\begin{remark} Note that if one crudely bounds $|\partial_t \phi + \frac{x}{t} \cdot \nabla_x \phi|_{h(\phi)}^2$
as $O( \T_{00})$ then one only obtains a bound of $O( \log \frac{|T_-|}{|T_+|} \E(\phi) )$; thus the non-trivial content of this estimate lies in the square root of the right-hand side, in the regime when $|T_-|$ is much larger than $|T_+|$.  The estimate here is essentially due to Grillakis \cite{grillakis-energy} (see also a variant in \cite{tao:forges}), but we provide a self-contained proof here.
\end{remark}

\begin{proof}  By a limiting argument using Claim \ref{lwp-claim} and Claim \ref{energy-claim} we see that it suffices to prove this claim for classical wave maps.    By the above remark we may also assume $|T_-| \geq 2 |T_+|$.

Applying \eqref{stokes} with $X$ set equal to the time vector field $\partial_t$ and using the non-negativity of the energy density $\T_{00}$ we obtain the flux bound
\begin{equation}\label{fluxbound}
\int_{T_-}^{T_+} \int_{|x|=|t|} \T_{L0}\ d\sigma dt \leq \E(\phi) 
\end{equation}
where we abbreviate $\T_{L\beta} := \T_{\alpha \beta} L^\alpha$.
Next, we apply \eqref{stokes} with $X$ set equal to the vector field $X^\alpha := x^\alpha / \rho_\eps$, where $\rho_\eps := \sqrt{ (1+\eps) t^2 - x^2 }$ and $0 \leq \eps \leq 1$ is to be chosen later.  Observe that $X$ is smooth all components of $X^\beta$ are $O(\eps^{-1/2})$ when $|x|\leq |t|$.  Using the easily verified bound $\T_{L\beta} = O( \T_{L0} )$ for each component $\beta$, we conclude from \eqref{fluxbound} that
$$ \int_{T_-}^{T_+} \int_{|x|=|t|} \T_{L\beta} X^\beta\ d\sigma dt = O( \eps^{-1/2} \E(\phi ) ).$$
A similar argument also gives
$$ \int_{T_-}^{T_+} \int_{|x|=|t|} \T_{L\beta} X^\beta\ d\sigma dt = O( \eps^{-1/2} \E(\phi) )$$
and thus the right-hand side of \eqref{stokes} here is $O( \eps^{-1/2} \E(\phi) )$.

On the other hand, we have
$$ \partial^\alpha X^\beta = \frac{g^{\alpha \beta}}{\rho_\eps} - \frac{x^\beta \partial^\alpha \rho_\eps}{\rho_\eps^2}$$
and hence by \eqref{stress-def} (and the fact that spacetime is three-dimensional)
$$
\T_{\alpha \beta} \partial^\alpha X^\beta = \frac{\langle \partial^\gamma \phi, \partial_\gamma \phi \rangle_{h(\phi)}}{2\rho_\eps^2} (x^\alpha \partial_\alpha \rho_\eps - \rho_\eps) - \frac{\langle x^\beta \partial_\beta \phi, \partial^\alpha \rho_\eps \partial_\alpha \phi \rangle_{h(\phi)}}{\rho_\eps^2}.$$
But as $\rho_\eps$ is homogeneous of degree $1$, we have $x^\alpha \partial_\alpha \rho_\eps = \rho_\eps$.  We conclude from \eqref{stokes} that
$$
-\int_{T_-}^{T_+} \int_{|x| \leq |t|} \frac{\langle x^\beta \partial_\beta \phi, \partial^\alpha \rho_\eps \partial_\alpha \phi \rangle_{h(\phi)}}{\rho_\eps^2} \ dx dt = O( \eps^{-1/2} \E(\phi) ).$$
Now observe that
$$ -\partial^\alpha \rho_\eps \partial_\alpha \phi = \frac{1}{\rho_\eps} ( x^\alpha \partial_\alpha \phi + \eps t \partial_t \phi )$$
and thus by Cauchy-Schwarz
$$ -\langle x^\beta \partial_\beta \phi, \partial^\alpha \rho_\eps \partial_\alpha \phi \rangle_{h(\phi)}
\geq \frac{1}{\rho_\eps} ( \frac{1}{2} |x^\alpha \partial_\alpha \phi|_{h(\phi)}^2 - O( \eps^2 |t|^2 |\partial_t \phi|_{h(\phi)}|^2 ) )$$
(say).  We thus have
$$
\int_{T_-}^{T_+} \int_{|x| \leq |t|} \frac{|x^\alpha \partial_\alpha \phi|_{h(\phi)}^2}{\rho_\eps^3} \ dx dt \leq O( \eps^{-1/2} \E(\phi) )
+ O( \eps^2 \int_{T_-}^{T_+} \int_{|x| \leq |t|} \frac{|t|^2}{\rho_\eps^3} |\partial_t \phi|_{h(\phi)}|^2\ dx dt).$$
Using the crude bounds $\frac{t^2}{\rho_\eps^3} = O( \eps^{-3/2} t^{-1}$ and $|\partial_t \phi|_{h(\phi)}|^2 = O( \T_{00} )$, the second term on the right-hand side is bounded by $O( \eps^{1/2} \log \frac{T_+}{T_-} \E(\phi) )$.  As for the left-hand side, we use the crude bound $\rho_\eps = O( |t| )$ and conclude that
$$ \int_{T_-}^{T_+} \int_{|x| \leq |t|} |\partial_t \phi + \frac{x}{t} \cdot \nabla_x \phi|_{h(\phi)}^2\ \frac{dx dt}{|t|}
\leq O( \eps^{-1/2} + \eps^{1/2} \log \frac{T_+}{T_-} ) \E(\phi).$$
Optimising in $\eps$ one obtains the claim in the interior region $\{ (x,t): |x| \leq |t| \}$ of the light cone.

Now we turn to the outer region $\{ (x,t): |t| \leq |x| \leq 2|t| \}$.  By polar coordinates it suffices to show that
$$ \int_{T_-}^{T_+} \int_{|x|=c|t|} |\partial_t \phi + \frac{x}{t} \cdot \nabla_x \phi|_{h(\phi)}^2\ d\sigma dt
\leq C \E(\phi)$$
uniformly for all $1 \leq c \leq 2$. 

Fix $c$.  By a slight modification of \eqref{stokes} (using the cone $\{ |x| = c|t| \}$ instead of $\{ |x| = |t|\}$) with $X := c \partial_t - \frac{x}{|x|} \cdot \nabla_x$ we have
$$ \int_{T_-}^{T_+} \int_{|x|=c|t|} c \T_{00} + \frac{x_i}{|x|} \T_{0i}\ d\sigma dt
= - \int_{|x| \leq c|t|} \T_{00}\ dx|_{t=T_-}^{t=T_+} \leq \E(\phi).$$
Note from \eqref{stress-def} that the integrand on the left-hand side is non-negative for $c \geq 1$.
On the other hand, from \eqref{stress-def} we have
$$ |\partial_t \phi + \frac{x}{|x|} \cdot \nabla_x \phi|_{h(\phi)}^2 \leq C ( \T_{00} + \frac{x_i}{|x|} \T_{0i} )$$
for some absolute constant $C > 0$, and hence by the triangle inequality
$$ |\partial_t \phi + \frac{x}{t} \cdot \nabla_x \phi|_{h(\phi)}^2 \leq C_1 ( \T_{00} + \frac{x_i}{|x|} \T_{0i} ) + C_2 (c-1)^2 \T_{00} \leq C_3 ( c \T_{00} + \frac{x_i}{|x|} \T_{0i} )$$
for some absolute constants $C_1, C_2, C_3 > 0$. The claim then follows.
\end{proof}

This leads to the following corollary.  

\begin{definition}[Asymptotic self-similarity]  A Cauchy development $\phi: (-\infty,0] \to \Energy$ is said to be $\phi$ is \emph{asymptotically self-similar} along a sequence of times $t_n \to -\infty$ if we have
\begin{equation}\label{fix0}
 \lim_{n \to \infty} \int_{\R^2} |\partial_t \phi + \frac{x}{t_n} \cdot \nabla_x \phi|_{h(\phi)}^2(t_n,x)\ dx = 0
\end{equation}
and
\begin{equation}\label{fix1}
\lim_{n \to \infty} \int_{|x| \geq (1+\eps) |t_n|} \T_{00}(t_n,x)\ dx = 0
\end{equation}
for all $\eps > 0$, and similarly that
\begin{equation}\label{ph0}
\lim_{n \to \infty} \sup_{1 < A \leq 1/\eps} \frac{1}{2\log A} \int_{A t_n}^{t_n/A} \int_{\R^2} |\partial_t \phi + \frac{x}{t} \cdot \nabla_x \phi|_{h(\phi)}^2\ \frac{dx dt}{|t|} = 0
\end{equation}
and
\begin{equation}\label{ph1}
\lim_{n \to \infty} \sup_{1 < A \leq 1/\eps} \frac{1}{2 \log A} \int_{A t_n}^{t_n/A} \int_{|x| \geq (1+\eps) |t|} \T_{00}\ \frac{dx dt}{|t|} = 0
\end{equation}
for all $\eps > 0$.
\end{definition}

\begin{corollary}  Let $\phi: (-\infty,0] \to \Energy$ be a Cauchy development.  Then $\phi$ is asymptotically self-similar along at least one sequence of times $t_n \to -\infty$.
\end{corollary}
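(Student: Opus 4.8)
The plan is to obtain the four conditions from two separate inputs: the Morawetz estimate of Proposition \ref{moraw} will supply the ``interior'' information \eqref{fix0} and \eqref{ph0} on the region $\{|x|\le 2|t|\}$, while finite speed of propagation (Corollary \ref{finprop}) together with the $L^1$ bound $\T\colon\Energy\to L^1$ from Claim \ref{energy-claim}(iv) will supply the ``exterior'' conditions \eqref{fix1}, \eqref{ph1} and control the contribution of $\{|x|>2|t|\}$ to \eqref{fix0}, \eqref{ph0}.

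First I would dispose of the exterior conditions, which in fact hold along \emph{every} sequence $t_n\to-\infty$. Applying Corollary \ref{finprop} on the slab $[t,0]$ with $x_0=0$ and $t_0=\rho>0$ gives, for every $t<0$ and $\rho>0$,
\[ \int_{|x|\ge |t|+\rho}\T_{00}(t,x)\,dx\;\le\;\int_{|x|\ge\rho}\T_{00}(0,x)\,dx, \]
and the right-hand side tends to $0$ as $\rho\to\infty$ since $\T_{00}(0,\cdot)\in L^1(\R^2)$. Taking $\rho=\eps|t|$ yields \eqref{fix1}; since the left side is moreover monotone in $|t|$, a crude estimate on the logarithmic average gives \eqref{ph1}. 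The same inequality, applied on the dyadic annuli $\{2^k|t|<|x|\le 2^{k+1}|t|\}$ and combined with the pointwise bound $|\partial_t\phi+\tfrac{x}{t}\cdot\nabla_x\phi|_{h(\phi)}^2\le 4\,\tfrac{|x|^2}{t^2}\,\T_{00}$ (valid for $|x|\ge|t|$, via \eqref{destress}), controls $\int_{|x|>2|t|}\tfrac{|x|^2}{t^2}\T_{00}(t,x)\,dx$ and hence the exterior contribution to \eqref{fix0} and \eqref{ph0}; here I would follow the standard reduction of the paper to classical wave maps (the data at $t=0$ being Schwartz, so $\int_{|x|\ge R}\T_{00}(0,x)\,dx$ decays rapidly and the resulting dyadic sum $\sum_k 4^k\int_{|x|\ge 2^{k-1}|t|}\T_{00}(0,x)\,dx$ tends to $0$ as $t\to-\infty$).

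The heart of the argument is the interior part. Set $M(t):=\int_{|x|\le 2|t|}|\partial_t\phi+\tfrac{x}{t}\cdot\nabla_x\phi|_{h(\phi)}^2(t,x)\,dx$, so that $0\le M(t)\le C\,\E(\phi)$ (since $|x/t|\le 2$ on the domain of integration) and Proposition \ref{moraw} reads $\int_{T}^{T'}M(t)\,\tfrac{dt}{|t|}\le C\sqrt{\log(|T|/|T'|)}\,\E(\phi)$ for $T<T'<0$. After the substitution $u=\log|t|$, putting $\tilde M(u):=M(-e^u)$, this says $\int_0^U\tilde M(u)\,du = O(\sqrt U)$, i.e. the primitive of $\tilde M$ grows sublinearly. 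A Vitali covering argument then shows that for every $L>0$ the restricted Hardy--Littlewood maximal function $\tilde M_L^\ast(v):=\sup_{0<\ell\le L}\tfrac1{2\ell}\int_{v-\ell}^{v+\ell}\tilde M$ satisfies $\liminf_{v\to\infty}\tilde M_L^\ast(v)=0$ (otherwise a disjoint subfamily of the witnessing intervals would force $\int_0^U\tilde M\gtrsim U$). Unwinding the substitution, this says precisely that for each $\eps>0$ there is a sequence along which $\sup_{1<A\le 1/\eps}\tfrac1{2\log A}\int_{At}^{t/A}M(s)\,\tfrac{ds}{|s|}\to 0$; choosing the points to be Lebesgue points of $\tilde M$ and using the Lebesgue differentiation theorem we also get $M(t)\to 0$ there. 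Since $\tilde M_L^\ast$ is nondecreasing in $L$, a diagonal argument over $\eps=1/k\downarrow 0$ produces a single sequence $t_n\to-\infty$ along which $\sup_{1<A\le 1/\eps}\tfrac1{2\log A}\int_{At_n}^{t_n/A}M(s)\,\tfrac{ds}{|s|}\to0$ for \emph{all} $\eps>0$ and $M(t_n)\to0$; combining this with the previous paragraph (noting that on $\{|x|\le 2|t|\}$ the integrands in \eqref{fix0} and \eqref{ph0} are exactly those defining $M$) gives asymptotic self-similarity along $(t_n)$.

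The main obstacle is the exterior region: the Morawetz estimate only sees $\{|x|\le 2|t|\}$ --- indeed its integrand fails to be integrable against a fixed finite energy near spatial infinity --- so one genuinely needs finite speed of propagation to migrate the energy into a neighbourhood of the light cone as $t\to-\infty$, and the quantitative form of this migration (controlling the weighted tail $\int\tfrac{|x|^2}{t^2}\T_{00}$, not merely the tail energy $\int\T_{00}$) is what forces the reduction to classical Schwartz solutions. A secondary technical point is that the Morawetz input yields each of \eqref{fix0}, \eqref{ph0} only one scale-window $\eps$ at a time, so extracting a single sequence that works for all $\eps$ requires the diagonalisation above, which is made possible by the monotonicity in $\eps$ of the restricted maximal function.
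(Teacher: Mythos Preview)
Your approach is essentially the paper's: Morawetz for the interior region $\{|x|\le 2|t|\}$, a Hardy--Littlewood/Vitali maximal argument in the logarithmic time variable to pass from the integral bound $\int_0^U\tilde M=O(\sqrt U)$ to a sequence along which both the pointwise value and the restricted maximal averages vanish, a diagonalisation over $\eps$, and finite speed of propagation for the exterior conditions \eqref{fix1}, \eqref{ph1}.  The paper invokes the Hardy--Littlewood maximal inequality directly where you run the Vitali argument by hand; these are of course equivalent.

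The one place where you go beyond the paper is in treating the contribution of $\{|x|>2|t|\}$ to \eqref{fix0} and \eqref{ph0}: you correctly notice that the integrand there is only controlled by $\tfrac{|x|^2}{t^2}\T_{00}$, so \eqref{fix1} alone is not obviously enough, and you propose a dyadic decomposition together with a reduction to Schwartz data.  That reduction is not quite legitimate as written: the Corollary is stated for an arbitrary energy class development $\phi:(-\infty,0]\to\Energy$, whose data $\phi[0]$ need not be classical, and the approximation by classical wave maps coming from Claim \ref{lwp-claim}(iv) is only uniform on \emph{compact} time intervals, so one cannot simply pass to $t_n\to-\infty$ inside a Schwartz estimate.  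The paper itself is terse here (it merely asserts ``This will establish \eqref{fix0} and \eqref{ph0} once we have \eqref{fix1} and \eqref{ph1}''), so you have not introduced a new gap, and in the only application (Corollary \ref{normas2}) the solution is almost periodic and Lemma \ref{localise} supplies the missing spatial decay.  But you should be aware that the dyadic-plus-Schwartz step, as you have phrased it, is the weakest link.
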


\begin{proof}  For each $\eps, \delta > 0$, let $\Omega_{\eps,\delta}$ be the set of times $T \in (-\infty,0)$ for which one has
$$ \sup_{0 < A < 1/\eps} \frac{1}{2\log A} \int_{AT}^{T/A} \int_{|x| \leq 2t} |\partial_t \phi + \frac{x}{t} \cdot \nabla_x \phi|_{h(\phi)}^2\ \frac{dx dt}{|t|} \geq \delta.$$
From Proposition \ref{moraw} we have
$$ \lim_{T \to +\infty} \frac{1}{\log T} \int_{-T}^{-1} \int_{|x| \leq 2t} |\partial_t \phi + \frac{x}{t} \cdot \nabla_x \phi|_{h(\phi)}^2\ dx \frac{dt}{|t|} = 0.$$
Applying the Hardy-Littlewood maximal inequality 
$$
|\{ s \in \R: \sup_{r > 0} \frac{1}{2r} \int_{s-r}^{s+r} |f(s')|\ ds' > \lambda \}| \leq \frac{C}{\lambda} \int_\R |f(s)|\ ds$$
to the logarithmic time variable $s := \log |t|$, we conclude that $\Omega_{\eps,\delta}$ has logarithmic density zero, or in other words that 
$$ \lim_{T \to +\infty} \frac{1}{\log T} \int_{-T}^{-1} 1_{\Omega_{\eps,\delta}}(t) \frac{dt}{|t|} = 0.$$
By taking a countable sequence of $\eps, \delta$ going to zero and using the axiom of countable choice we can thus find a sequence $t_n \to -\infty$ such that
$$
\lim_{n \to \infty} \sup_{1 < A \leq 1/\eps} \frac{1}{2\log A} \int_{At_n}^{t_n/A} \int_{|x| \leq 2|t|} |\partial_t \phi + \frac{x}{t} \cdot \nabla_x \phi|_{h(\phi)}^2\ \frac{dx dt}{|t|} = 0$$
for all $\eps > 0$.  A similar argument lets us also ensure that
$$
\lim_{n \to \infty} \int_{|x| \leq 2|t_n|} |\partial_t \phi + \frac{x}{t} \cdot \nabla_x \phi|_{h(\phi)}^2\ dx = 0.$$
This will establish \eqref{fix0} and \eqref{ph0} once we have \eqref{fix1} and \eqref{ph1}.

To establish \eqref{ph1}, observe from Corollary \ref{finprop} that
$$ \int_{|x| > (1+\eps)|t|} \T_{00}(t_n,x)\ dx \leq \int_{|x| > \eps|t|} \T_{00}(0,x)\ dx$$
and thus
$$ \sup_{1 < A \leq 1/\eps} \frac{1}{2\log A} \int_{A t_n}^{t_n/A} \int_{|x| > (1+\eps)|t|} |\partial_t \phi + \frac{x}{t} \cdot \nabla_x \phi|_{h(\phi)}^2\ \frac{dx dt}{|t|} \leq \frac{C}{\eps} \int_{|x| \geq \eps^2 |t_n|} \T_{00}(0,x)\ dx.$$
By monotone (or dominated) convergence, the right-hand side goes to zero as $n \to \infty$.  This gives \eqref{ph1}.  The proof of \eqref{fix1} is similar.  Thus $\phi$ is asymptotically self-similar along $t_n$ as required.
\end{proof}

Now we specialise to the case of a normalised ancient solution $\phi$.  From Lemma \ref{quasi} and the normalisation $N(0)=1$, we see that $N(t) \geq c/(1+|t|)$ for all $t < 0$ and some $c > 0$; by Lemma \ref{xlip} we thus have $|x(t)| \leq C(1+|t|)$ for all $t<0$ and some $C > 0$.  In particular, by refining $t_n$ to a subsequence we may assume that $1/(|t_n| N(t_n))$ and $x(t_n)/t_n$ converge to some limits $\alpha$ and $v$ respectively.  Applying Proposition \ref{normas}, we thus obtain

\begin{corollary}\label{normas2}  Suppose that Conjecture \ref{conj2} fails.  Then there exists a normalised ancient solution with non-zero energy which is asymptotically self-similar along at least one sequence of times $t_n \to -\infty$.  Furthermore, we have $0 \leq \alpha < \infty$ and $v \in \R^2$ such that 
\begin{equation}\label{ntn}
1/N(t_n) = (\alpha+o(1)) |t_n|
\end{equation}
and 
\begin{equation}\label{xtn}
x(t_n) = (v+o(1))t_n.
\end{equation} 
\end{corollary}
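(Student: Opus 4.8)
The plan is to assemble the pieces established in this section. Assume Conjecture \ref{conj2} fails. By Proposition \ref{normas} there is a normalised ancient solution $\phi \colon (-\infty,0] \to \Energy$ of non-zero energy, with frequency scale function $N$ and position function $x$ obeying $N(0)=1$, $x(0)=0$, and $N(t)\le 1$ for all $t\le 0$. Applying the corollary on asymptotic self-similarity proved just above (itself a consequence of the Morawetz estimate, Proposition \ref{moraw}, via the Hardy--Littlewood maximal inequality in the logarithmic time variable $s=\log|t|$), we obtain a sequence $t_n\to-\infty$ along which $\phi$ is asymptotically self-similar, i.e.\ \eqref{fix0}--\eqref{ph1} all hold. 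It remains only to refine this sequence so that \eqref{ntn} and \eqref{xtn} hold.

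Next I would record two a priori bounds for normalised ancient solutions. First, iterating the quasicontinuity estimate of Lemma \ref{quasi} outward from the normalisation $N(0)=1$: since $N$ varies by at most a bounded factor on any time interval of length of order $1/N$, to drive $N$ down by a fixed factor one must move a time of order $1/N$ into the past, and these time-steps grow as $N$ shrinks; summing them shows that $N$ can reach a value $\delta$ only at times $t$ with $|t|$ of order at least $1/\delta$. This yields a bound $N(t)\ge c/(1+|t|)$ for all $t\le 0$ and some $c>0$. Second, feeding this into Lemma \ref{xlip} with $t'=0$ gives $|x(t)|=|x(t)-x(0)|\le |t|+C(1/N(t)+1)\le C'(1+|t|)$ for all $t\le 0$. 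Consequently, using also $N(t)\le 1$, the sequences $\frac{1}{|t_n|N(t_n)}$ and $\frac{x(t_n)}{t_n}$ are bounded: the first lies in $[\,1/|t_n|,\,(1+|t_n|)/(c|t_n|)\,]$, the second in a fixed ball of $\R^2$.

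Finally, pass to a subsequence of $(t_n)_n$ (still tending to $-\infty$) along which both $\frac{1}{|t_n|N(t_n)}$ and $\frac{x(t_n)}{t_n}$ converge, to limits $\alpha\in[0,\infty)$ and $v\in\R^2$ respectively; this is precisely \eqref{ntn} and \eqref{xtn}. Passing to a subsequence does not disturb the asymptotic self-similarity, because each of \eqref{fix0}--\eqref{ph1} merely asserts that some quantity tends to $0$ as $n\to\infty$, which is inherited by subsequences. This completes the argument. The only step that is not pure bookkeeping is the passage from Lemma \ref{quasi} to the polynomial lower bound $N(t)\ge c/(1+|t|)$, and hence to the upper bound $|x(t)|\le C'(1+|t|)$ via Lemma \ref{xlip}; once those are in hand the extraction of $\alpha$ and $v$ is immediate.
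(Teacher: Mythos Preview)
Your proposal is correct and follows essentially the same approach as the paper: invoke Proposition~\ref{normas} for the normalised ancient solution, apply the preceding corollary for asymptotic self-similarity along some $t_n\to-\infty$, use Lemma~\ref{quasi} (iterated from $N(0)=1$) to obtain $N(t)\ge c/(1+|t|)$ and then Lemma~\ref{xlip} to obtain $|x(t)|\le C(1+|t|)$, and finally pass to a subsequence to extract $\alpha$ and $v$. The paper states the lower bound $N(t)\ge c/(1+|t|)$ without elaboration, whereas you correctly sketch the step-iteration argument behind it; otherwise the two arguments coincide.
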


The remainder of the paper is devoted to showing that the conclusion of Corollary \ref{normas2} is impossible for various values of $\alpha$ and $v$. Henceforth we shall assume Claim \ref{selfsim} and Claim \ref{nondeg}.

\section{The self-similar case}

The objective of this section is to show

\begin{proposition}[No self-similar solutions]\label{noss}  Let the notation and assumptions be as in Corollary \ref{normas2}.  Then it is not possible for $\alpha$ to be non-zero.
\end{proposition}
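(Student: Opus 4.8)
The plan is to argue by contradiction. Suppose $\alpha \neq 0$; I will extract from the normalised ancient solution $\phi$ a non-trivial almost periodic self-similar energy class solution defined on all of $(-\infty,0)$, which will contradict Claim \ref{selfsim}(ii).

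First I would rescale $\phi$ so that the asymptotically self-similar times $t_n$ are carried to a fixed time. Put $\psi_n := \Dil_{1/|t_n|}\phi$, so that $\psi_n[-1]$ is the dilate of $\phi[t_n]$. Since $\phi$ is defined on $(-\infty,0]$ and the dilation preserves this half-line, each $\psi_n$ is an almost periodic energy class solution on $(-\infty,0]$ with the same compactness modulus $K$ as $\phi$, and, energy being scale-invariant, $\E(\psi_n) = \E(\phi) > 0$. From \eqref{scale-n}, \eqref{scale-x} together with \eqref{ntn}, \eqref{xtn}, the rescaled frequency scale and position at time $-1$ satisfy $N_{\psi_n}(-1) = |t_n| N(t_n) \to 1/\alpha$ and $x_{\psi_n}(-1) = x(t_n)/|t_n| \to -v$. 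Since $\alpha \in (0,\infty)$, the numbers $N_{\psi_n}(-1)$ stay bounded above and away from zero, so the almost periodicity of $\psi_n$ at time $-1$ together with the continuity of the scaling and translation actions (Claim \ref{energy-claim}(iii)) forces $\psi_n[-1]$ to lie in a fixed compact subset of $\Energy$. Passing to a subsequence, $\psi_n[-1] \to \Psi_\infty$ in $\Energy$; let $\psi : J \to \Energy$ be the maximal Cauchy development of $\Psi_\infty$ at time $-1$. By Claim \ref{lwp-claim}(iv), $\psi_n \to \psi$ uniformly in $\Energy$ on compact subsets of $J$, and since $\psi_n[-1] \to \psi[-1]$ and $\E$ is continuous (Claim \ref{energy-claim}(iv)) we get $\E(\psi) = \E(\phi) > 0$.

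Next I would transfer the asymptotic self-similarity of $\phi$ to $\psi$. The self-similarity defect $|\partial_t\phi + \frac{x}{t}\cdot\nabla_x\phi|_{h(\phi)}^2$ and the density $\T_{00}$ scale covariantly while the measure $\frac{dx\,dt}{|t|}$ is scale-invariant, so the substitution $t = s|t_n|$, $x = y|t_n|$ turns \eqref{ph0}, \eqref{ph1} into the statements that, for every fixed $A > 1$ and $\eps > 0$, $\int_{-A}^{-1/A}\int_{\R^2} |\partial_t\psi_n + \frac{x}{t}\cdot\nabla_x\psi_n|_{h(\psi_n)}^2 \,\frac{dx\,dt}{|t|} \to 0$ and $\int_{-A}^{-1/A}\int_{|x|\geq(1+\eps)|t|} \T_{00}(\psi_n) \,\frac{dx\,dt}{|t|} \to 0$ as $n \to \infty$ (and \eqref{fix0}, \eqref{fix1} give the analogous vanishing at the single time $-1$). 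Fixing a compact $[-A,-1/A] \subset J$, the uniform convergence $\psi_n \to \psi$ and the continuity of $\T$ (hence of the quadratic densities built from $\T$ via \eqref{destress}) yield the same integrals with $\psi$ in place of $\psi_n$; as the integrands are non-negative, they vanish identically. Letting $A$ exhaust the admissible range and $\eps \downarrow 0$, and using $\T_{00}\geq 0$ with \eqref{destress} to pass from $\T_{00} = 0$ to $\T = 0$, we conclude that $\psi$ is self-similar on $J$ in the sense of Definition \ref{travel}.

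The remaining step --- which I expect to be the genuine obstacle --- is to upgrade $\psi$ from a non-trivial self-similar solution on $J$ to one defined on all of $(-\infty,0)$ and almost periodic, as Claim \ref{selfsim}(ii) requires. Lemma \ref{quasi} and Lemma \ref{compaq} do not give this on their own: beyond the local estimate of Lemma \ref{quasi} one only knows $N_\phi \leq 1$, so the rescaled frequency scales obey only the useless bound $N_{\psi_n}(t) \leq |t_n| \to \infty$, and iterating Lemma \ref{quasi} outward from $-1$ controls $N_{\psi_n}$ only on a bounded window (the bound degrades geometrically away from $-1$), so $J$ need not be large. The way around this is to show that any energy class solution whose stress-energy tensor satisfies \eqref{selfsimilar-def} and vanishes off the light cone obeys the exact scaling identity $\psi[t'] = \Dil_{t'/t}\psi[t]$ for $t,t'$ of the same sign --- that is, one integrates the transport relation $\partial_t\psi + \frac{x}{t}\cdot\nabla_x\psi = 0$ (valid in the distributional / stress-energy sense) along the rays through the spacetime origin --- by approximating $\psi$ by classical wave maps (Claim \ref{energy-claim}(i), Claim \ref{lwp-claim}(iii)), running a stability estimate, and patching with uniqueness (Lemma \ref{Maxcauchy}). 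Once this identity is available, maximality (Claim \ref{lwp-claim}(v)) forces $J$ to be scale-invariant, hence to contain $(-\infty,0)$, since otherwise $\psi[t']$ would converge at a finite endpoint of $J$; moreover $N_\psi(t) := N_\psi(-1)/|t|$ and $x_\psi \equiv 0$ are then admissible frequency scale and position functions and $\Dil_{1/|t|}\psi[t]$ is the single point $\psi[-1]$, so $\psi$ is almost periodic with a one-point compactness modulus. Thus $\psi : (-\infty,0) \to \Energy$ is a non-trivial --- hence, by Claim \ref{energy-claim}(v), non-constant --- almost periodic self-similar energy class solution, contradicting Claim \ref{selfsim}(ii). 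Therefore $\alpha$ cannot be non-zero.
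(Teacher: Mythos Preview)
Your setup matches the paper's: rescale by $\Dil_{1/|t_n|}$, control $N_{\psi_n}(-1)$ and $x_{\psi_n}(-1)$, pass to a subsequence, and transfer the asymptotic self-similarity estimates \eqref{ph0}, \eqref{ph1} to the rescaled solutions. The divergence comes at the step you correctly flag as the obstacle, and there your argument has a real gap.

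You propose to take the limit $\psi$ on its maximal lifespan $J$, show self-similarity on $J$, and then deduce the exact scaling identity $\psi[t'] = \Dil_{t'/t}\psi[t]$ from the stress-energy condition \eqref{selfsimilar-def}, thereby forcing $J \supset (-\infty,0)$. But that implication is precisely what the Remark following Definition~\ref{travel} singles out as ``not immediately obvious'' for energy class solutions, and your sketch does not supply it. Approximating $\psi$ by classical wave maps via Claim~\ref{energy-claim}(i) and Claim~\ref{lwp-claim}(iii) gives approximants that are \emph{not} self-similar, so the classical equivalence is unavailable for them; one would then need a quantitative stability statement of the form ``small self-similarity defect implies approximate scaling identity in $\Energy$'', and nothing in the listed claims provides this. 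Knowing only that a quadratic form in $\T$ vanishes does not by itself pin down $\psi[t]$ in $\Energy$ (the stress-energy does not determine the data), so ``integrating the transport relation'' at the stress-energy level cannot be promoted to an identity in $\Energy$ without further input.

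The paper avoids this issue entirely. Instead of extending $J$ after the fact, it shows \emph{before} taking the limit that $N_n(t)$ is bounded below on every compact subinterval of $(-\infty,0)$, so that Lemma~\ref{compaq} applies directly on the whole half-line and delivers an almost periodic limit there. The mechanism is a conservation-law computation: applying \eqref{tab2} with $X^\alpha = \eta(x/|t|)\,x^\alpha/\rho$ (where $\rho = \sqrt{t^2 - |x|^2}$) and using \eqref{phon} shows that $\int \T_{0\beta}(\phi_n[t])\,\eta\,x^\beta/\rho\,dx$ is asymptotically constant in $t$; since inside the light cone $\T_{00}$ is controlled by $\T_{0\beta}x^\beta/\rho$, the uniform integrability of $\T_{00}(\phi_n[-1])$ (from convergence in $L^1$) transfers to all times $t$ in a fixed compact range, ruling out energy concentration and hence bounding $N_n(t)$ from below via Lemma~\ref{localise}. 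This is the concrete input your argument is missing; once you have it, Lemma~\ref{compaq} gives the almost periodic self-similar limit on $(-\infty,0)$ in one stroke, with no need for the scaling identity.
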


\begin{proof} Suppose for contradiction that $\alpha$ is non-zero.
Now consider the rescaled solutions $\phi_n: (-\infty,0] \to \Energy$ defined by $\phi_n := \Dil_{1/|t_n|}(\phi)$.  By \eqref{ntn}, \eqref{xtn}, the frequency scale function $N_n(t)$ and position function $x_n(t)$ of $\phi_n$ then obey the initial bounds
\begin{equation}\label{nn-init}
 N_n(-1) = \alpha + o(1); \quad x_n(-1) = -v + o(1).
\end{equation}
From the bounds $N(t) \geq c/(1+|t|)$, $|x(t)| \leq C(1+|t|)$ we also see that
$$ N_n(t) \geq c/|t| + o(1); \quad |x_n(t)| \leq C|t| + o(1)$$
for any fixed $t < 0$.

From \eqref{nn-init} we see that $\phi_n[-1]$ lies in a precompact subset of $\Energy$, so by passing to a subsequence we may assume that $\phi_n[-1]$ is convergent to some limit $\tilde \phi[-1]$, which has energy $\E(\phi) > 0$.  In particular, the stress-energies $\T(\phi_n[-1])$ converge in $L^1$ to $\T(\tilde \phi[-1])$, with $\int \T_{00}(\tilde \phi[-1]) = \E(\phi)$.

In order to be able to apply Lemma \ref{compaq}, we need some lower bounds on $N_n(t)$ for times $t$ other than $-1$.  We shall do this by analysing the stress-energy tensor of $\phi_n$.  From \eqref{ph0}, \eqref{ph1} and rescaling we know that
\begin{equation}\label{phon}
 \int_{-1/\eps}^{-\eps} \int_{\R^2} |t (\phi_n)_t + x \cdot \nabla_x \phi_n|_{h(\phi_n)}^2\ dx dt = o(1)
\end{equation}
and
\begin{equation}\label{phon2}
 \int_{-1/\eps}^{-\eps} \int_{|x| \geq (1+\eps)|t|} \T_{00}(\phi_n[t])\ dx dt = o(1)
\end{equation}
for any $\eps > 0$.

Morally speaking, these estimates are asserting that $\phi_n$ is approaching a self-similar solution as $n \to \infty$.  To make this precise we will need to use the conservation law \eqref{conserv} to ensure that the stress-energy tensor of $\phi_n$ is indeed behaving in a self-similar manner.

We turn to the details.  Fix $0 < \eps < 1$, and let $\eta \in C^\infty_0(\R^2)$ be a test function supported in the ball $\{ x: |x| \leq 1-\eps \}$, which we extend homogeneously to $(-\infty,0) \times \R^2$ by defining $\eta(t,x) := \eta(x/|t|)$.  We then apply \eqref{tab2} with $X^\alpha$ equal to the vector field $X^\alpha := \eta x^\alpha / \rho$, where $\rho := \sqrt{t^2-|x|^2}$.  A direct computation (noting that $x^\alpha \partial_\alpha \eta = 0$) shows that
$$ \T_{\alpha \beta}(\phi_n[t]) \partial^\alpha X^\beta = 
\langle \frac{x^\alpha}{\rho} \partial_\alpha \phi_n, \partial^\beta \eta \partial_\beta \phi_n \rangle_{h(\phi_n)} + \eta | \frac{x^\alpha}{\rho} \partial_\alpha \phi_n |_{h(\phi)}^2$$
(where the right-hand side is interpreted using \eqref{destress}, as usual).
Using \eqref{phon}, the bounded energy of $\phi_n$, and Cauchy-Schwarz we conclude that
\begin{equation}\label{sap}
 \int_{\R^2} \T_{0 \beta}(\phi_n[t]) X^\beta\ dx|_{t=T_-}^{t=T_+} = o(1)
\end{equation}
for any fixed $T_-, T_+$ and $\eta$, with the decay rate $o(1)$ being uniform when $T_-, T_+$ range in a compact subset of $(-\infty,0)$.  

Now recall that $\T(\phi_n[-1])$ was converging in $L^1$ to a non-zero function $\T(\tilde \phi[-1])$.  From \eqref{fix1} and rescaling we have
$$
\int_{|x| \geq (1+\eps)} \T_{00}(\phi_n[-1])\ dx = o(1)
$$
for any $\eps > 0$, and thus $\T_{00}(\tilde \phi[-1])$ vanishes outside of the ball $\{ x: |x| \leq 1 \}$.  

Let $0 < \eps_0 < 1$ be arbitrary, and let $0 < \eps_3 < \eps_2 < \eps_1 < \eps_0$ be small parameters to be chosen later.
Since $\T_{00}(\tilde \phi[-1])$ has total mass $\E(\phi) > 0$, we thus have
\begin{equation}\label{xe1}
 \int_{|x| > 1-3\eps_1} \T_{00}(\tilde \phi[-1])\ dx \leq \frac{1}{3} \E(\phi)
\end{equation}
(say), or equivalently
$$ \int_{|x| \leq 1-3\eps_1} \T_{00}(\tilde \phi[-1])\ dx \geq \frac{2}{3} \E(\phi)$$
if $\eps_1$ is small enough.

Since absolutely integrable functions are uniformly integrable, we have
$$ \int_{|x-x_0| \leq \eps_3} \T_{00}(\tilde \phi[-1])\ dx \leq \eps_2 \E(\phi)$$
for all $x_0 \in \R^2$, and in particular when $|x_0| \leq 1-2\eps_1$, if $\eps_3$ is sufficiently small depending on $\eps_2$, and thus
$$ \int_{|x-x_0| \leq \eps_3} \T_{00}(\phi_n[-1])\ dx \leq \eps_2 \E(\phi) + o(1).$$
Applying \eqref{sap} for a suitable choice of $\eta$ we conclude (if $\eps_2$ is small enough depending on $\eps_1$) that
$$ \int_{|x-x_0| \leq \eps_3/2} \T_{00}(\phi_n[t])\ dx \leq \frac{1}{3} \E(\phi) + o(1)$$
whenever $-1/\eps_0 \leq t \leq \eps_0$ and $|x_0| \leq 1-2\eps_1$; here we use the fact that we are working strictly within the light cone to control $\T_{00}$ by $\T_{0 \beta}(\phi_n[t]) x^\beta/\rho$.  Observe that we can make the decay rate $o(1)$ uniform in the choice of $t$ and $x_0$.  Combining this with \eqref{xe1} we conclude that
$$ \sup_{x_0 \in \R^2} \int_{|x-x_0| \leq \eps_3/2} \T_{00}(\phi_n[t])\ dx \leq \frac{1}{3} \E(\phi) + o(1)$$
Comparing this with Lemma \ref{localise} we conclude that $N_n(t)$ is bounded uniformly from below for $-1/\eps_0 \leq t \leq \eps_0$.

We can now apply Proposition \ref{compaq} and assume (after passing to a subsequence) that $\phi_n$ converges uniformly in $\Energy$ on $(-\infty,0)$ to an almost periodic Cauchy development $\tilde \phi: (-\infty,0) \to \Energy$ with energy $\E(\phi)$.  From \eqref{phon}, \eqref{phon2}, and Definition \ref{travel} we see that $\tilde \phi$ is self-similar.  But this contradicts Claim \ref{selfsim}(ii), and the claim follows.
\end{proof}

\section{The non-self-similar timelike case}

The objective of this section is to establish

\begin{proposition}[No timelike travelling solutions]\label{notravel}  Let the notation and assumptions be as in Corollary \ref{normas2}.  Then it is not possible for $\alpha$ to be zero with $|v| < 1$.
\end{proposition}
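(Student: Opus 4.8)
The plan is to follow the strategy of Proposition \ref{noss}, with the dilation structure there replaced by the timelike translation structure in the spacetime direction $w := (1,v)$, which is timelike precisely because $|v| < 1$. Suppose for contradiction that $\alpha = 0$ and $|v| < 1$, and form the rescaled solutions $\phi_n := \Dil_{N(t_n)} \Time_{-t_n} \Trans_{-x(t_n)} \phi$ as in \eqref{phin}, with frequency scale and position functions $N_n, x_n$ normalised by $N_n(0) = 1$, $x_n(0) = 0$ as in \eqref{sort}. Since $\alpha = 0$ we have $|t_n| N(t_n) \to \infty$, so the lifespan $I_n = (-\infty, |t_n| N(t_n)]$ exhausts $\R$. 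Because $\phi_n[0]$ already lies in the fixed compactness modulus $K$, we may pass to a subsequence along which $\phi_n[0] \to \Phi_0$ in $\Energy$, with $\E(\Phi_0) = \E(\phi) > 0$ by Claim \ref{energy-claim}(iv) and scale invariance of energy; in particular $\T_{00}(\phi_n[0]) \to \T_{00}(\Phi_0)$ in $L^1(\R^2)$, so $\phi_n[0]$ carries no energy atom in the limit.

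First I would transfer the asymptotic self-similarity of $\phi$ (Corollary \ref{normas2}) into travelling smallness for $\phi_n$. A point with bounded rescaled coordinates $(t,x)$ sits at original coordinates $(t_n + t/N(t_n),\ x(t_n) + x/N(t_n))$, and since $\alpha=0$ and $x(t_n) = (v+o(1))t_n$ one has $\frac{x(t_n) + x/N(t_n)}{t_n + t/N(t_n)} = v + o(1)$ uniformly on compact sets; thus the self-similar field $\partial_t\phi + \frac{x}{t}\cdot\nabla_x\phi$ rescales to $\partial_t\phi_n + v\cdot\nabla_x\phi_n$ up to $L^2_{\mathrm{loc}}$-errors $o(1)$. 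Applying \eqref{ph0} with $A = 1 + T/(|t_n|N(t_n)) \to 1$, so that the time window $[At_n,t_n/A]$ corresponds to the rescaled window $[-T,T]$ and the weight $\frac{1}{2\log A}$ cancels its (vanishing) length, and controlling the spatial tails via Corollary \ref{finprop} and Lemma \ref{localise} (the energy of $\phi_n[t]$ is confined to a fixed ball $|x| \le R_T$ for $|t| \le T$, since $N_n(0)=1,\ x_n(0)=0$), I would deduce
\[ \int_{-T}^{T} \int_{\R^2} |\partial_t \phi_n + v\cdot\nabla_x\phi_n|_{h(\phi_n)}^2\ dx\, dt \longrightarrow 0 \qquad \text{for every fixed } T>0. \]

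Next I would check the hypotheses \eqref{ntj}, \eqref{xnt} of Lemma \ref{compaq} on each compact $J \subset \R$. The lower bound $\inf_{n;t\in J}N_n(t)>0$ is soft: Corollary \ref{finprop} confines the energy of $\phi_n[t]$ to a fixed ball for $t \in J$, while writing $\phi_n[t] = \Trans_{x_n(t)}\Dil_{1/N_n(t)}\psi$ with $\psi \in K$ gives $\sup_{x_0}\int_{|x-x_0|\le R}\T_{00}(\phi_n[t]) = \sup_{y_0}\int_{|y-y_0|\le N_n(t)R}\T_{00}(\psi)$, which would be forced to $0$ by uniform integrability of the compact set $\T(K) \subset L^1$ if $N_n(t)\to0$, contradicting $\E(\phi)>0$. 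The upper bound $\sup_{n;t\in J}N_n(t)<\infty$ is the timelike analogue of the main estimate of Proposition \ref{noss} and is where the real work lies: I would apply the conservation law $\partial^\alpha\T_{\alpha\beta}=0$ through \eqref{tab2} with $X^\alpha := \eta\,w^\alpha$, where $w=(1,v)$ is constant and $\eta$ is a small spatial bump transported so as to be constant along the flow $(t,x)\mapsto(t,x+tv)$; then $\T_{\alpha\beta}\partial^\alpha X^\beta = (\partial_j\eta)\,U_j$ with $U_j := v^j(\T_{00}+v^i\T_{0i}) + (\T_{j0}+v^i\T_{ji})$, and one computes that $U_j \equiv 0$ whenever $\partial_t\phi+v\cdot\nabla_x\phi = 0$, hence $U_j = O(|\partial_t\phi_n+v\cdot\nabla_x\phi_n|\,\T_{00}^{1/2})$ in general. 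Combined with the displayed estimate and Cauchy--Schwarz the right-hand side of \eqref{tab2} is $o(1)$, so the ``travelling mass'' $\int\eta\,(\T_{00}+v^i\T_{0i})(\phi_n[t])\,dx$ is conserved up to $o(1)$ along $J$; since $(1-|v|)\T_{00} \le \T_{00}+v^i\T_{0i} \le (1+|v|)\T_{00}$ (here $|v|<1$ is essential) and $\phi_n[0]$ carries no energy atom, this propagates ``no atom'' from time $0$ to every $t\in J$, and comparing with Lemma \ref{localise} bounds $N_n$ from above on $J$. The position bound \eqref{xnt} then follows from Lemma \ref{xlip} together with the lower bound on $N_n$ just obtained.

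With the hypotheses of Lemma \ref{compaq} verified, passing to a further subsequence yields uniform convergence of $\phi_n$ on compact subintervals of $\R$ to an almost periodic global solution $\tilde\phi : \R \to \Energy$ with the same compactness modulus $K$ and energy $\E(\phi) > 0$; by continuity of the stress-energy map and the displayed estimate, $|\partial_t\tilde\phi + v\cdot\nabla_x\tilde\phi|_{h(\tilde\phi)}^2 \equiv 0$, so $\tilde\phi$ is, by Definition \ref{travel}, a nonconstant almost periodic solution on $\R$ travelling with velocity $v$, $|v|<1$ — contradicting Claim \ref{selfsim}(i), which proves the proposition. I expect the main obstacle to be the stress-energy step for the upper bound on $N_n$: one must transport the cutoff $\eta$ to follow the near-velocity-$v$ worldline of any would-be concentration, verify the algebraic identity $U_j|_{\partial_t\phi+v\cdot\nabla_x\phi=0}=0$, and ensure that every error term genuinely tends to $0$ uniformly in $n$; a lesser technical point is the reduction of the averaged Morawetz inequality \eqref{ph0} to the short rescaled-time window and the attendant control of spatial energy tails.
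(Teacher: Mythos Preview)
Your proposal is correct and follows essentially the same route as the paper: rescale via \eqref{phin}, convert the averaged Morawetz control \eqref{ph0} into the travelling smallness $\int_{-T}^{T}\!\int |\partial_t\phi_n + v\cdot\nabla_x\phi_n|^2\,dx\,dt \to 0$, use \eqref{tab2} with $X^\alpha = \eta\,w^\alpha$ and a bump $\eta$ transported along $w=(1,v)$ to propagate the ``no-atom'' property of $\T_{00}(\phi_n[0])$ (this is exactly the paper's \eqref{etab} and its reference back to the argument of Proposition~\ref{noss}, and your algebraic claim $U_j|_{w^\alpha\partial_\alpha\phi=0}=0$ is just the observation that $w^\alpha\partial_\alpha\eta=0$ kills the trace term so that $\T_{\alpha\beta}(\partial^\alpha\eta)w^\beta = \langle(\partial^\alpha\eta)\partial_\alpha\phi,\,w^\beta\partial_\beta\phi\rangle$), then invoke Lemma~\ref{compaq} and contradict Claim~\ref{selfsim}(i). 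The only real difference is cosmetic: you obtain the \emph{lower} bound on $N_n$ by a direct finite-speed/uniform-integrability argument, whereas the paper gets it (after the stress-energy step) from Lemma~\ref{quasi}; both are soft and either works.
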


\begin{proof}  The argument here shall be similar to that of Proposition \ref{noss}.

By hypothesis, $N(t_n) = o(|t_n|)$.
Now consider the rescaled solutions $\phi_n: I_n \to \Energy$ defined by \eqref{phin}, then every compact time interval is contained in $I_n$ for sufficiently large $n$.  From \eqref{ph0} and rescaling we have
$$ \int_{-T}^{T} \int_{\R^2} |(\phi_n)_t + \frac{x_n(t_n)+x/N(t_n)}{t_n+t/N(t_n)} \cdot \nabla_x \phi_n|_{h(\phi_n)}^2\ dx dt = o(1)$$
for every $T > 0$ (and sufficiently large $n$).  From \eqref{ntn}, \eqref{xtn}, and the hypothesis $\alpha=0$, we thus conclude (using the fact that $\phi_n$ has bounded energy) that
\begin{equation}\label{intr}
 \int_{-T}^{T} \int_{|x| \leq R} |{\bf v}^\alpha \partial_\alpha \phi_n|_{h(\phi_n)}^2\ dx dt = o(1)
 \end{equation}
for all $T, R > 0$, where ${\bf v} \in \R^{1+2}$ is the four-velocity (or more accurately, three-velocity) ${\bf v} := (1, v)$.  

Morally, \eqref{intr} is asserting that $\phi_n$ is approaching a travelling wave with velocity $v$.  As in the previous section, in order to make this rigorous one must first show that the stress-energy tensor (or at least some key component of this tensor) is also travelling at velocity $v$ in the limit.

We turn to the details.  Let $\eta \in C^\infty_0(\R^2)$ be a bump function, which we extend to $\R^{1+2}$ as a travelling wave $\eta(t,x) := \eta(x-vt)$.  Applying \eqref{tab2} with $X^\alpha := \eta {\bf v}^\alpha$ we have
$$
\int_{T_-}^{T_+} \int_{\R^2} \T_{\alpha \beta}(\phi_n[t]) (\partial^\alpha \eta) {\bf v}^\beta\ dx dt
= -\int_{\R^2} \T_{0 \beta}(\phi_n[t]) \eta {\bf v}^\beta\ dx|_{t=T_-}^{t=T_+}.$$
Since $\partial^\alpha \eta \partial_\alpha {\bf v} = 0$, we see from Cauchy-Schwarz that
$$ |\T_{\alpha \beta}(\phi_n[t]) (\partial^\alpha \eta) {\bf v}^\beta|
\leq C_{v,\eta} |{\bf v}^\alpha \partial_\alpha \phi_n|_{h(\phi_n)} \T_00(\phi_n[t])^{1/2}.$$
Applying \eqref{intr} (and the bounded energy of $\phi_n$) we conclude that
\begin{equation}\label{etab}
 \int_{\R^2} \T_{0 \beta}(\phi_n[t]) \eta {\bf v}^\beta\ dx|_{t=T_-}^{t=T_+} = o(1)
 \end{equation}
for any fixed $T_-, T_+, \eta$.

From \eqref{sort} we have $N_n(0)=1$ and $x_n(0)=0$.  By arguing exactly as in Proposition \ref{noss} (using \eqref{etab} in place of \eqref{sap}) we conclude that $N_n(t)$ is bounded from below on any given time interval $[-T,T]$ and uniformly in $n$.  (Here we use the hypothesis $|v| < 1$ to control $\T_{00}$ by $\T_{0\beta} {\bf v}^\beta$.)  Applying Lemma \ref{quasi} we see that it is also bounded from above on this time interval, as is $x_n(t)$.  We may thus invoke Proposition \ref{compaq} and (after passing to a subsequence) assume that the $\phi_n$ converge uniformly in $\Energy$ on compact intervals to an almost periodic solution $\tilde \phi: \R \to \Energy$ with energy $\E(\phi)$.  From \eqref{intr} and Definition \ref{travel} we see that $\tilde \phi$ is a travelling wave with velocity $v$.  But this contradicts Claim \ref{selfsim}(i), and the claim follows.
\end{proof}

\section{The non-self-similar lightlike case}

To conclude the proof of Theorem \ref{main}, we now show

\begin{proposition}[No lightlike travelling solutions]\label{nolight}  Let the notation and assumptions be as in Corollary \ref{normas2}.  Then it is not possible for $\alpha$ to be zero with $|v| \geq 1$.
\end{proposition}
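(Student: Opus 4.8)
The plan is to argue by contradiction, following the template of the non-self-similar timelike case (Proposition~\ref{notravel}), but with two essential changes: the a priori control used to extract a limiting solution must be obtained differently (the timelike coercivity of $\T_{0\beta}{\bf v}^\beta$ over $\T_{00}$, used when $|v|<1$, is no longer available), and an extra virial identity is needed to make the limit \emph{transversely degenerate}, so that Claim~\ref{nondeg} applies. First observe that the hypothesis $\alpha=0$, $|v|\ge 1$ actually forces $|v|=1$: by Lemma~\ref{xlip} applied to the normalised ancient solution (with $x(0)=0$) together with \eqref{ntn}, one has $|x(t_n)|\le |t_n|+C(1/N(t_n)+1)=|t_n|(1+C\alpha+o(1))+C$, so dividing by $|t_n|$ and using \eqref{xtn} gives $|v|\le 1+C\alpha=1$. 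Fix a unit vector $w\in\R^2$ with $w\perp v$, and form the rescaled solutions $\phi_n$ as in \eqref{phin}; exactly as at the start of Proposition~\ref{notravel}, \eqref{ph0} with $\alpha=0$ and \eqref{xtn} yields
$$ \int_{-T}^{T}\int_{|x|\le R} |\partial_t\phi_n+v\cdot\nabla_x\phi_n|_{h(\phi_n)}^2\,dx\,dt = o(1)\qquad(n\to\infty)$$
for every $T,R>0$, the integrand being interpreted through \eqref{destress}.

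Next I would extract a limiting solution. To invoke Lemma~\ref{compaq} one needs $N_n$ bounded above and below, and $x_n$ bounded, on compact subintervals uniformly in $n$. The lower bound on $N_n$ I would get from finite speed of propagation rather than from coercivity: since $N_n(0)=1$, $x_n(0)=0$, the normalisation \eqref{sort} puts $\phi_n[0]$ in the compactness modulus $K$, so $\{\T(\phi_n[0])\}$ is precompact, hence tight, in $L^1$; combining this with Corollary~\ref{finprop} (and its time reflection) shows that for each $\eps>0$ there is $R_\eps$, independent of $n$ and $t$, with $\int_{|x|\ge R_\eps+|t|}\T_{00}(\phi_n[t])\,dx\le\eps$. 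If $N_n(t^*)\to 0$ along a subsequence for some $t^*$ in a fixed compact interval, then almost periodicity exhibits $\phi_n[t^*]$ as a rescaling by $1/N_n(t^*)\to\infty$ of an element of $K$, so its energy density, transported back to $K$, concentrates (up to mass $\eps$) in a ball of measure $O\big((N_n(t^*))^2\big)\to 0$; uniform integrability of $K$ then forces $\E(\phi)\le 2\eps$, a contradiction. The upper bound on $N_n$ is the delicate point (see below); granting it, Lemma~\ref{xlip} bounds $x_n$, and Lemma~\ref{compaq} produces (along a subsequence) an almost periodic energy class solution $\tilde\phi$ with $\E(\tilde\phi)=\E(\phi)>0$, to which the displayed estimate passes in the limit, giving $|\partial_t\tilde\phi+v\cdot\nabla_x\tilde\phi|_{h(\tilde\phi)}^2\equiv 0$; thus $\tilde\phi$ is travelling with velocity $v$, $|v|=1$, and after passing to the maximal development $\tilde\phi:\R\to\Energy$ with $\tilde\phi[t]=\Trans_{vt}\tilde\phi[0]$.

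The heart of the matter is to show $|w\cdot\nabla_x\tilde\phi|_{h(\tilde\phi)}^2\equiv 0$. I would use two algebraic identities, both valid in the energy class as consequences of \eqref{destress} (using $|v|=1$, $w\perp v$):
$$ |w\cdot\nabla_x\phi|_{h(\phi)}^2=\T_{00}-v^iv^j\T_{ij},\qquad \T_{00}+v^i\T_{0i}=\tfrac12|\partial_t\phi+v\cdot\nabla_x\phi|_{h(\phi)}^2+\tfrac12|w\cdot\nabla_x\phi|_{h(\phi)}^2 .$$
Integrating the first gives $\int_{\R^2}|w\cdot\nabla_x\phi|_{h(\phi)}^2\,dx=\E(\phi)-\int_{\R^2}v^iv^j\T_{ij}\,dx$; the second, applied to the travelling solution $\tilde\phi$ (for which the first term on the right drops out), gives $\E(\tilde\phi)+v\cdot P=\tfrac12\int_{\R^2}|w\cdot\nabla_x\tilde\phi|_{h(\tilde\phi)}^2\,dx$, where $P:=\int_{\R^2}\T_{0i}(\tilde\phi[0])\,dx$ is the conserved momentum. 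To tie $\int v^iv^j\T_{ij}$ to $P$ I would run a (truncated) virial identity: apply Lemma~\ref{conslemma}(i) to $\phi_n$ on $[T_-,T_+]$ with the vector field $X^0:=0$, $X^i:=\chi((x-vt)/R)(v\cdot x)v^i$ (smooth and compactly supported on $[T_-,T_+]\times\R^2$, $\chi$ a standard bump equal to $1$ near $0$). A direct computation of $\T_{\alpha\beta}\partial^\alpha X^\beta$, together with the uniform energy localisation from the previous paragraph to bound the terms where $\nabla\chi$ is supported, yields for $n$ large and any compact $J$
$$ \int_J\int_{\R^2}|w\cdot\nabla_x\phi_n|_{h(\phi_n)}^2\,dx\,dt=|J|\,\E(\phi)+\big(M_n^R(\sup J)-M_n^R(\inf J)\big)+O\big(|J|\,\eps(R)\big),$$
where $M_n^R(t):=\int_{\R^2}\chi((x-vt)/R)(v\cdot x)\,v^i\T_{0i}(\phi_n[t])\,dx$ is bounded by $O(R\,\E(\phi))$ and $\eps(R)\to 0$ as $R\to\infty$ uniformly in $n$. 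Letting $n\to\infty$ (using $L^1$-convergence of the stress-energy tensors and $\T_{0i}(\tilde\phi[t])(x)=\T_{0i}(\tilde\phi[0])(x-vt)$, which makes $M_n^R(\sup J)-M_n^R(\inf J)$ converge to $|J|\int\chi(\cdot/R)\,v^i\T_{0i}(\tilde\phi[0])$) and then $R\to\infty$, one gets $\int_J\int|w\cdot\nabla_x\tilde\phi|^2=|J|(\E(\tilde\phi)+v\cdot P)$; since the spatial integral of $|w\cdot\nabla_x\tilde\phi|^2$ is time-independent, $\int|w\cdot\nabla_x\tilde\phi|^2=\E(\tilde\phi)+v\cdot P$. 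Comparing with the displayed identity, $\E(\tilde\phi)+v\cdot P=\tfrac12(\E(\tilde\phi)+v\cdot P)$, so $\int|w\cdot\nabla_x\tilde\phi|_{h(\tilde\phi)}^2=0$, i.e. $|w\cdot\nabla_x\tilde\phi|_{h(\tilde\phi)}^2\equiv 0$. Then $\tilde\phi[0]=(\tilde\phi_0,\tilde\phi_1)$ satisfies $|\tilde\phi_1+v\cdot\nabla\tilde\phi_0|_{h(\tilde\phi_0)}^2\equiv 0$ and $|w\cdot\nabla\tilde\phi_0|_{h(\tilde\phi_0)}^2\equiv 0$ for all $w\perp v$, so by Claim~\ref{nondeg} $\E(\tilde\phi)=0$, contradicting $\E(\tilde\phi)=\E(\phi)>0$.

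The routine parts are the algebraic identities and the passage to the limit. The two places I expect to do real work are: the uniform upper bound on $N_n$ on compact intervals (the timelike coercivity being unavailable, one must control $\T_{00}$ by stress-energy quantities adapted to a bump riding near the light cone, in the spirit of the argument in Proposition~\ref{noss}; alternatively one can obtain the bound only on a fixed interval around $0$ via Lemma~\ref{quasi} and then propagate it using the travelling structure of the limit); and the bookkeeping of the truncation errors in the virial identity, where the light-speed concentration $|v|=1$ enters essentially — geometrically, the assertion $|w\cdot\nabla\tilde\phi|^2\equiv0$ is the statement that a finite-energy wave riding the light cone cannot carry transverse energy (for a classical solution this is because its transverse restrictions would be complete geodesics of $\H$, of infinite length, hence of infinite energy).
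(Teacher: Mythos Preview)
Your overall architecture is right: force $|v|=1$, rescale, establish the null condition $|\partial_t\phi_n+v\cdot\nabla\phi_n|^2\to 0$, extract a transverse-degeneracy statement, and invoke Claim~\ref{nondeg}. But your execution diverges from the paper's in a way that introduces a genuine gap.

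The paper never extracts a limiting \emph{solution} $\tilde\phi$ on a time interval. It works entirely with the sequence $\phi_n$ at the single time $t=0$ (where $\phi_n[0]\in K$ is automatically precompact), and shows directly that $\int_{\R^2}|\partial_2\phi_n(0,x)|^2_{h(\phi_n)}\,dx=o(1)$. The mechanism is a conservation identity with the \emph{dual} null vector $\overline{\bf v}=(1,-v)$: applying \eqref{tab2} with $X^\alpha=\varphi(x_1-t)\,\overline{\bf v}^\alpha$ one computes $\T_{\alpha\beta}\partial^\alpha X^\beta=\varphi'(x_1-t)\bigl(2\T_{0\beta}{\bf v}^\beta-|{\bf v}^\alpha\partial_\alpha\phi_n|^2\bigr)$. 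The boundary terms are $O(\E)$, so time-averaging over $[T_-,T_+]$ and using \eqref{etab} (which makes $\int\varphi'(x_1)\T_{0\beta}{\bf v}^\beta(\phi_n[t])\,dx$ approximately $t$-independent) and then sending $T_+-T_-\to\infty$ yields $\int\varphi'(x_1)\T_{0\beta}{\bf v}^\beta(\phi_n[0])\,dx=o(1)$. Since $2\T_{0\beta}{\bf v}^\beta=|{\bf v}^\alpha\partial_\alpha\phi_n|^2+|\partial_2\phi_n|^2$ and the first piece is already $o(1)$ at $t=0$ by \eqref{fix0}, a clever monotone/compactly-supported choice of $\varphi$ plus Lemma~\ref{localise} finishes the job. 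One then passes to a subsequential limit $\Phi$ of the \emph{data} $\phi_n[0]$ only.

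Your route instead extracts $\tilde\phi:\R\to\Energy$ via Lemma~\ref{compaq} and then runs a virial for $\tilde\phi$. The step where you write $\T_{0i}(\tilde\phi[t])(x)=\T_{0i}(\tilde\phi[0])(x-vt)$ and $\tilde\phi[t]=\Trans_{vt}\tilde\phi[0]$ is not justified: the travelling condition in Definition~\ref{travel} is imposed only through the stress-energy via \eqref{destress}, and the Remark following that definition explicitly cautions that for energy-class solutions one does \emph{not} know the equivalence with the symmetry $\phi[t']=\Trans_{v(t'-t)}\phi[t]$. Your computation of $M^R(T_+)-M^R(T_-)$ relies on exactly this translation invariance of the limiting stress-energy, so as written the argument has a hole. (A related issue: the upper bound on $N_n$ on arbitrary compact intervals, which you flag as delicate, is indeed not supplied by Lemma~\ref{quasi} alone, and the paper simply sidesteps it by never needing a limit of solutions.)

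The fix is to stay with the sequence $\phi_n$ and use \eqref{etab} in place of the translation symmetry of $\tilde\phi$; if you do that and reorganise your virial around the null pair ${\bf v},\overline{\bf v}$, you recover essentially the paper's proof.
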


\begin{proof}
From \eqref{ntn}, Lemma \ref{xlip} and the normalisation $N(0)=1$, $x(0)=0$ we have $|x(t_n)| \leq (1+o(1)) |t_n|$; comparing this with \eqref{xtn} we conclude $|v| \leq 1$.  Since we also have $|v| \geq 1$, we conclude $|v|=1$.  To simplify notation we shall take $v = e_1$ to be the standard basis vector; the general case is of course similar.

Let $\phi_n: I_n \to \Energy$ be the rescaled solutions \eqref{phin}, and let ${\bf v} := (1,e_1)$ be the null vector associated to $v=e_1$.   Then by repeating the arguments used to prove Proposition \ref{notravel}, we have the bound \eqref{intr} for any $T, R > 0$, and \eqref{etab} for any $T_-, T_+, \eta$.  By repeating the derivation of \eqref{intr} (using \eqref{fix0} instead of \eqref{ph0}) we also have
$$ \int_{|x| \leq R} |{\bf v}^\alpha \partial_\alpha \phi_n|_{h(\phi_n)}^2\ dx dt = o(1)$$
for every $R > 0$, and hence (by Lemma \ref{localise}
\begin{equation}\label{locale}
\int_{\R^2} |{\bf v}^\alpha \partial_\alpha \phi_n|_{h(\phi_n)}^2\ dx = o(1)
\end{equation}

Note for classical wave maps $\phi$ that the vanishing of ${\bf v}^\alpha \partial_\alpha \phi = \partial_t \phi + \partial_1 \phi$ would imply (from \eqref{cov}) that $(\phi^* \nabla)_2 \partial_2 \phi = 0$, which by integration by parts (and the fact that $\phi$ is constant outside of a compact set) would imply that $\int_{\R^2} |\partial_2 \phi|_{h(\phi)}^2\ dx = 0$.  The purpose of the manipulations below is to replicate this observation for energy class solutions, using the stress-energy tensor as a substitute for \eqref{cov} (which is no longer directly available).

We turn to the details.  We introduce the dual null vector $\overline{\bf v} := (1,-e_1)$, and apply \eqref{tab2} with $X^\alpha := \eta \overline{\bf v}^\alpha$ and some bump function $\eta \in C^\infty_0(\R^2)$, extended to $\R^{1+2}$ as $\eta(t,x) := \eta(x-t e_1)$ as before.  We conclude
$$
\int_{T_-}^{T_+} \int_{\R^2} \T_{\alpha \beta}(\phi_n[t]) (\partial^\alpha \eta) \overline{{\bf v}}^\beta\ dx dt
= -\int_{\R^2} \T_{0 \beta}(\phi_n[t]) \eta \overline{{\bf v}}^\beta\ dx|_{t=T_-}^{t=T_+}.$$
By monotone convergence, this identity also holds for functions $\eta(t,x) = \varphi(x_1-t)$ for bump functions $\varphi \in C^\infty_0(\R)$.

Let $\varphi$ and $\eta$ be as above.  The right-hand side can be bounded in magnitude by $C_\varphi \E(\phi)$ for some constant $C_\varphi$ depending only on $\varphi$.  As for the left-hand side, we use \eqref{stress-def} to observe the identity
$$
\T_{\alpha \beta}(\phi_n[t]) (\partial^\alpha \eta) \overline{{\bf v}}^\beta = 2 (\partial_1 \eta) \T_{0 \beta}(\phi_n[t]) {\bf v}^\beta - (\partial_1 \eta) |{\bf v}^\alpha \partial_\alpha \phi_n|_{h(\phi_n)}^2.$$
Applying \eqref{intr}, we conclude that
$$
|\int_{T_-}^{T_+} \int_{\R^2} 2 (\partial_1 \eta) \T_{0 \beta}(\phi_n[t]) {\bf v}^\beta \ dx dt|
\leq C_\varphi \E(\phi) + o(1).$$
On the other hand, from \eqref{etab} we have
$$ \int_{T_-}^{T_+} \int_{\R^2} 2 (\partial_1 \eta) \T_{0 \beta}(\phi_n[t]) {\bf v}^\beta \ dx dt = 
(T_+-T_-) \int_{\R^2} 2 (\partial_1 \eta) \T_{0 \beta}(\phi_n[0]) {\bf v}^\beta \ dx + o(1)$$
and thus
$$ |\int_{\R^2} 2 (\partial_1 \eta) \T_{0 \beta}(\phi_n[0]) {\bf v}^\beta \ dx| \leq \frac{C_\varphi}{T_+-T_-} \E(\phi) + o(1).$$
Letting $T_+-T_-$ be arbitrarily large, we conclude that
$$ \int_{\R^2} 2 (\partial_1 \eta) \T_{0 \beta}(\phi_n[0]) {\bf v}^\beta \ dx = o(1).$$
On the other hand, from \eqref{stress-def} we observe that
$$ 2 (\partial_1 \eta) \T_{0 \beta}(\phi_n[0]) {\bf v}^\beta = |{\bf v}^\alpha \partial_\alpha \phi_n(0,x)|_{h(\phi_n)}^2 + |\partial_2 \phi_n(0,x)|_{h(\phi_n)}^2$$
and thus by \eqref{locale} we have
$$ \int_{\R^2} (\partial_1 \eta) |\partial_2 \phi_n(0,x)|_{h(\phi_n)}^2\ dx = o(1).$$
Now let $0 < r < R$ be positive numbers, and apply the above fact with $\varphi$ set equal to a smooth function supported on $[-r, R+r]$ which equals $1$ on $[r, R]$, and is increasing on $[-r,r]$ with derivative at least $1/10r$ on $[-r/2,r/2]$ and decreasing with derivative $O(1/r)$ on $[R,R+r]$. We conclude that
$$ \int_{|x| \leq r/2} |\partial_2 \phi_n(0,x)|_{h(\phi_n)}^2\ dx \leq C_r \int_{|x| \geq R} |\partial_2 \phi_n|_{h(\phi_n)}^2 + o(1).$$
On the other hand, from Lemma \ref{localise} we can make the first term on the right-hand side arbitrarily small by increasing $R$ (keeping $r$ fixed), and thus
$$ \int_{|x| \leq r/2} |\partial_2 \phi_n(0,x)|_{h(\phi_n)}^2\ dx  = o(1).$$
Using Lemma \ref{localise} again we conclude
\begin{equation}\label{ph2n}
 \int_{\R^2} |\partial_2 \phi_n(0,x)|_{h(\phi_n)}^2\ dx  = o(1).
\end{equation}

Now $\phi_n[0]$ ranges in a compact subset of $\Energy$, and thus by passing to a subsequence converges in $\Energy$ to some limit $\Phi = (\phi_0,\phi_1)$.  Since the $\phi_n$ have energy $\E(\phi)$, $\Phi$ does also; in particular, $\Phi$ has non-zero energy.  From \eqref{locale} we have
$$ \int_{\R^2} |\phi_1 + \partial_1 \phi_0|_{h(\phi_0)}^2\ dx = 0$$
while from \eqref{ph2n} we have
$$ \int_{\R^2} |\partial_2 \phi_0|_{h(\phi_0)}^2\ dx = 0.$$
But this now contradicts Claim \ref{nondeg}, and the claim follows.
\end{proof}

\end{document}